\renewcommand{\Re}{\operatorname{Re}}
\renewcommand{\Im}{\operatorname{Im}}
\newcommand{\R}{{\mathbb{R}}}
\newcommand{\N}{{\mathbb{N}}}
\newcommand{\C}{{\mathbb{C}}}
\newcommand{\A}{{\tilde{A}}}
\newcommand{\B}{{\tilde{B}}}
\newcommand{\II}{I^{\geq \eta_0}} 
\newcommand{\dd}{\mathrm{d}}
\newcommand{\ii}{\mathrm{i}}
\newcommand{\ie}{i.e., }
\newcommand{\eg}{e.g., }
\newcommand{\wt}{\widetilde}
\newcommand{\<}{\langle}
\renewcommand{\>}{\rangle}
\newcommand{\G}{\mathrm{Gin}}
\newcommand{\AG}{A^{\mathrm{Gin}}}
\newcommand{\BG}{B^{\mathrm{Gin}}}
\newcommand{\MG}{M^{\mathrm{Gin}}}
\title{On the spectral radius of the ratio of Girko matrices}
\date{Autumn 2025; Revised Winter 2026}
\author{Djalil Chafaï}
\address{DMA, École normale supérieure, CEREMADE, Université Paris-Dauphine, PSL, CNRS, France}
\email{djalil.chafai@ens.psl.eu}
\author{David García-Zelada} 
\address{LPSM, Sorbonne Université, Paris, France}
\email{david.garcia-zelada@sorbonne-universite.fr}
\author{Yuan Yuan Xu}
\address{Academy of Mathematics and Systems Science,
Chinese Academy of Sciences, Beijing, China}
\email{yyxu2023@amss.ac.cn}
\keywords{Random matrix; Spherical Model; Spectral Radius; Logarithmic Potential; Fluctuation; Universality}%
\subjclass[2020]{%
60B20 - Random matrices (probabilistic aspects); %
15A60 - Norms of matrices, numerical range, applications of functional analysis to matrix theory; %
60F05 - Central limit and other weak theorems. %
}
\numberwithin{equation}{section}
\newtheorem{theorem}{Theorem}[section]%
\newtheorem{lemma}[theorem]{Lemma}%
\newtheorem{remark}[theorem]{Remark}%
\newtheorem{example}[theorem]{Example}%
\begin{document}
\begin{abstract} 
  Girko matrices have independent and identically distributed entries of mean zero and unit variance. In this note, we consider the random matrix model formed by the ratio of two independent Girko matrices. Such a random matrix is heavy tailed with dependent entries. Our main message is that divided by the square root of the dimension, the spectral radius of the ratio converges in distribution, when the dimension tends to infinity, to a universal heavy-tailed distribution. We provide 
  a mathematical proof of this high-dimensional phenomenon, under a fourth moment matching with a Gaussian case known as the complex Ginibre ensemble. In this Gaussian case, the model is known as the spherical ensemble, and its spectrum is a determinantal planar Coulomb gas. Its image by the inverse stereographic projection is a rotationally invariant gas on the two-sphere. 
  A crucial observation is the invariance in law of the model under inversion, related to its spherical symmetry, and that makes, in a sense, edge and bulk equivalent. Our approach involves Girko Hermitization, local law estimates for Wigner matrices, lower bound estimates on the smallest singular value, and convergence of kernels of determinantal point processes. The universality of the high-dimensional fluctuation of the spectral radius of the ratio of Girko matrices turns out to be remarkably more accessible mathematically than for a single Girko matrix!
\end{abstract}
\maketitle

{\footnotesize\tableofcontents}


\section{Introduction}

The asymptotic analysis in high dimension of the spectral radius of non-normal
random matrices is a delicate subject, which goes back at least to
\cite{zbMATH03864263,zbMATH03978046,zbMATH03940306,zbMATH02028588}. There has been recent progress in the case of
Girko random matrices, see for instance
\cite{MR4408512,MR4761213,cipolloni2024universalityextremaleigenvalueslarge,zbMATH07925435}
and references therein. Girko matrices have independent square-integrable
entries. This note addresses the case of the ratio of two independent Girko
matrices, a model of heavy-tailed non-normal random matrices.
For this model, we prove the universality of
the asymptotic behavior of the spectral radius in high dimension. This model,
in the special case of Ginibre matrices, known as the spherical ensemble,
becomes integrable and biunitary invariant, and the law of its spectrum is a
planar heavy-tailed Coulomb gas with a determinantal structure. The spherical
ensemble is a matrix analogue and extension of the ratio of independent real
Gaussian random variables, and can be then seen as a sort of matrix Cauchy law.

The \emph{eigenvalues} of an $n\times n$ complex matrix
$A=(A_{jk}:1\leq j,k\leq n)\in\mathcal{M}_n(\mathbb{C})$ are the roots in
$\mathbb{C}$ of its characteristic polynomial
$\det(A-z\mathrm{Id}_n)\in\mathbb{C}[z]$. They form a multiset of cardinal $n$
called the \emph{spectrum} of $A$, denoted $\mathrm{spec}(A)$. The
\emph{unnormalized empirical spectral distribution}, \emph{spectral radius}, and
\emph{reciprocal or inner spectral radius} of $A$ are\footnote{When
  $A$ is invertible, $\rho_{\min}(A)=\rho_{\max}(A^{-1})^{-1}$, a formula
  that remains valid in an extended sense when $A$ is singular.}
\begin{equation}\label{def_esd}
\mu_A=\sum_{\lambda\in\mathrm{spec}(A)}\delta_{\lambda},
\qquad
\rho_{\max}(A)=\max_{\lambda\in\mathrm{spec}(A)}|\lambda|,
\quad\text{and}\quad
\rho_{\min}(A)=\min_{\lambda\in\mathrm{spec}(A)}|\lambda|.
\end{equation}
The reason why we choose not to normalize $\mu_A$ by $1/n$ is the central role played by point processes in our work.

\subsection{Main results}

We say that an $n\times n$ random matrix $A=(A_{jk}: 1\leq j,k\leq n)$ is a \emph{Girko matrix} when its entries are independent and identically distributed (i.i.d.) in $\mathbb{C}$ with $\mathbb{E}(A_{11})=0$ and $\mathbb{E}(|A_{11}|^2)=1$. We say that $A$ is a \emph{complex Ginibre matrix} in the special case where $A_{11}\sim\mathcal{N}_{\mathbb{C}}(0,\frac{1}{2}\mathrm{Id}_2)$, which is equivalent to state that $A$ has density in $\mathbb{C}^{n\times n}$ proportional to $A\in\mathcal{M}_n(\mathbb{C})\mapsto\exp(-\mathrm{Tr}(AA^*))$. To distinguish, we denote such a Ginibre matrix by $\AG$, and we always write $\mathrm{Gin}$ as a superscript to indicate this special case.

\smallskip
We consider the following assumptions or conditions on an $n\times n$ Girko matrix $A$: 
\begin{enumerate}[label=(C\arabic*)]
\item\label{it:condensity} the law of $A_{11}$ has a bounded density function $\varphi$ with respect to the Lebesgue measure on $\mathbb{C}$, and
\begin{equation*}
\|\varphi\|_\infty\leq D_0\quad\text{ for some constant $D_0>0$.}
\end{equation*}
\item\label{it:condmom4} \emph{Fourth moment matching condition.}
There exists a small constant $c_0>0$ such that
\begin{equation*}
\mathcal{E}_1=\mathcal{E}_2=0, \quad \qquad \mathcal{E}_3 \leq n^{-\frac{1}{2}-c_0}, \quad\qquad \mathcal{E}_4 \leq  n^{-c_0},
\end{equation*}
where ($k$-th moment deviation of $A_{11}$ from standard complex Gaussian)
\begin{equation*}
\mathcal{E}_k=\max_{\substack{a,b\in\N\\a+b=k}} \left|\mathbb{E}[(\Re A_{11})^a(\Im A_{11})^b]-\mathbb{E}[(\Re \AG_{11})^a(\Im \AG_{11})^b]\right|.
\end{equation*}

\item\label{it:condmom} \emph{Finite moment condition.} 
For each $k\geq 1$, there exists constants $D_k>0$ (independent of $n$) such that
\begin{equation*}
\mathbb{E}[|A_{11}|^k] < D_k.
\end{equation*}

\end{enumerate}

 Condition \ref{it:condensity} implies that the $n\times n$ complex matrix $A$ is a.s.\ invertible for all $n$ since the algebraic hypersurface $\{\det=0\}$ has zero Lebesgue measure. Moreover, the uniform bound on the density allows a precise  lower bound on the smallest singular value, see \eg \cite{Davies}. More generally, we refer to \cite{MR4680362} for the invertibility of random matrices beyond the density case.

Condition \ref{it:condmom4} states that the moments of $A_{11}$ almost match those of $\AG_{11}$ up to the fourth order.

Condition \ref{it:condmom} is assumed for technical reasons to simplify a proof.

\begin{example}[Moment matching with the standard complex Gaussian distribution]
An example of a probability measure $\nu$ on $\mathbb{C}\equiv\mathbb{R}^2$ with moment matching up to order $k\geq1$ with
$\mu=\mathcal{N}(0,\frac{1}{2}\mathrm{Id}_2)$ is 
\begin{equation}
\dd\nu=(1+f)\dd\mu
\end{equation}
where $f\in L^2(\mu)$ is such that 
$f\neq0$, $f\geq-1$, and $f\perp\mathbb{R}_k[X,Y]$ where $\mathbb{R}_k[X,Y]$ is the set of bivariate polynomials with total degree less than or equal to $k$, the span of $X^aY^b$ with $a,b\in\mathbb{N}$, $a+b\leq k$. Indeed, for all $P\in\mathbb{R}_k[X,Y]$,
\begin{equation}
\int P\dd\nu-\int P\dd\mu=\langle P,f\rangle_{L^2(\mu)}=0.
\end{equation}
The special case $P=1$ states that $\nu$ is a probability measure. 
However, this construction has no reason to produce a product measure. Also, we may prefer to take alternatively 
\begin{equation}
\dd\nu=g\dd\mu
\quad\text{where}\quad
g(z)=g(x+\ii y)=(1+f_1(x))(1+f_2(y))
\end{equation}
where $f_1,f_2:\mathbb{R}\to\mathbb{R}$ are in $L^2(\mathcal{N}(0,\frac{1}{2}))$ and satisfy
$f_i\neq0$, $f_i\geq-1$, and $f_i\perp\mathbb{R}_k[X]$ where $\mathbb{R}_k[X]$ is the span of univariate polynomials with total degree less than or equal to $k$. A basic example is given by $f_1=f_2=c^{-1}H_m$ where $H_m$ is the Hermite polynomial, orthogonal with respect to $\mathcal{N}(0,\frac{1}{2})$, of even degree $m>k$, and $c=|\inf_{\mathbb{R}}H_m|>0$. In this case $\nu$ has a bounded density with respect to the Lebesgue measure on $\mathbb{C}\equiv\mathbb{R}^2$. The moment matching up to order $k$ between $\mu$ and $\nu$ comes from $\mu=\mathcal{N}(0,\frac{1}{2})^{\otimes 2}$.
In another direction, by using the Gauss--Hermite quadrature, it is possible to construct a finite discrete distribution that matches the moments of the Gaussian up to an arbitrary fixed order.
\end{example}

This note is concerned with the space of 
point configurations endowed with the 
weakest topology such that the maps
$\mu_{\mathcal X} \mapsto \int f\mathrm d \mu_{\mathcal X}$
are continuous
for every continuous
compactly supported function $f$.
As a consequence, the convergence in distribution
for a sequence of point processes
is always taken with respect to this topology.

\begin{theorem}[From ratio of Girko to infinite Ginibre]\label{th:spherical:girko:gininf}
 Let $M=AB^{-1}$ where $A$ and $B$ are independent $n\times n$ Girko matrices, not necessarily of the same law, both satisfying \ref{it:condensity}--\ref{it:condmom}. Let $\mathrm{Gin}_\infty$ be the infinite Ginibre determinantal point process on $\mathbb{C}$, with kernel 
 $K(z,w)=\sqrt{\gamma(z)\gamma(w)}\mathrm{e}^{z\overline{w}}$ where $\gamma(z)=\frac{1}{\pi}\mathrm{e}^{-|z|^2}$. 
 Then, for all fixed $\lambda_0 \in \mathbb C$, 
    \[
    \mu_{\sqrt{n}(1+|\lambda_0|^2)^{-1}(M-\lambda_0\mathrm{Id})}=\sum_{\lambda\in\mathrm{spec}(M)}\delta_{\sqrt{n}(1+|\lambda_0|^2)^{-1}(\lambda - \lambda_0)}
    \xrightarrow[n\to\infty]{\dd}
    \mathrm{Gin}_\infty.
    \]
\end{theorem}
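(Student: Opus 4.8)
The plan is to combine Girko's Hermitization with a local law for the Hermitization of $A-zB$ and a lower bound on its smallest singular value, so as to transport the statement to the exactly solvable Ginibre case, where it follows from convergence of determinantal kernels. We begin with the \emph{reduction to a linear statistic of log-determinants.} For locally finite point processes, convergence in distribution for the topology of compactly supported continuous test functions is equivalent to convergence of the Laplace functionals $\mathbb E\exp(-\sum_\lambda f(\cdot))$ over $f\in C_c(\C)$, $f\ge0$; since $0\le\ee^{-t}\le1$, it suffices to prove that for every $f\in C_c^\infty(\C)$ the linear statistic $\sum_{\lambda\in\mathrm{spec}(M)}f(\tfrac{\sqrt n}{1+|\lambda_0|^2}(\lambda-\lambda_0))$ converges in distribution to $\sum_{\xi\in\mathrm{Gin}_\infty}f(\xi)$, together with an a priori estimate $\sup_n\mathbb E\,\#\{\lambda\in\mathrm{spec}(M):\tfrac{\sqrt n}{1+|\lambda_0|^2}(\lambda-\lambda_0)\in K\}<\infty$ on compact $K$ (which lets one pass from smooth to merely continuous $f$). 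Set $u_n(w):=\lambda_0+\tfrac{1+|\lambda_0|^2}{\sqrt n}w$. Girko's Hermitization formula applied to the test function $z\mapsto f(\tfrac{\sqrt n}{1+|\lambda_0|^2}(z-\lambda_0))$, the change of variables $z=u_n(w)$, the factorization $M-u_n(w)=(A-u_n(w)B)B^{-1}$, and the identity $\int_\C\Delta f=0$ (which kills $\log|\det B|$) give
\[
\sum_{\lambda\in\mathrm{spec}(M)}f\Big(\tfrac{\sqrt n}{1+|\lambda_0|^2}(\lambda-\lambda_0)\Big)=\frac{1}{2\pi}\int_\C\Delta f(w)\,\log\big|\det\big(A-u_n(w)B\big)\big|\,\dd w ,
\]
the integral being with respect to the planar Lebesgue measure $\dd w$. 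Now $A-uB$ has i.i.d.\ entries of mean $0$ and variance $1+|u|^2$, so $\wt G_u:=(1+|u|^2)^{-1/2}(A-uB)$ has i.i.d.\ entries of mean $0$ and variance $1$; moreover \ref{it:condensity} gives $\wt G_u$ a uniformly bounded entry density, and \ref{it:condmom4} makes the entry law of $\wt G_u$ match the standard complex Gaussian up to the fourth moment with the same rates (fourth-moment matching being stable under variance-normalized linear combinations of independent variables), both uniformly for $u$ near $\lambda_0$. Splitting $\log|\det(A-uB)|=\tfrac n2\log(1+|u|^2)+\log|\det\wt G_u|$, Taylor expanding $\tfrac n2\log(1+|u_n(w)|^2)$ in powers of $n^{-1/2}$, and using once more $\int_\C\Delta f=0$, the harmonicity of $w\mapsto\Re(\overline{\lambda_0}w)$, and $\Delta(\tfrac12(1+|\lambda_0|^2)|w|^2-(\Re(\overline{\lambda_0}w))^2)=2$, the deterministic part contributes only the local term $\tfrac1\pi\int_\C f$, and one is left with
\[
\sum_{\lambda\in\mathrm{spec}(M)}f\Big(\tfrac{\sqrt n}{1+|\lambda_0|^2}(\lambda-\lambda_0)\Big)=\frac1\pi\int_\C f+\mathcal L_n(f)+o(1),\qquad \mathcal L_n(f):=\frac{1}{2\pi}\int_\C\Delta f(w)\,\log\big|\det\wt G_{u_n(w)}\big|\,\dd w ,
\]
the $w$-independent part of the integrand (notably the $O(n\log n)$ bulk of the log-determinant) dropping out against $\Delta f$. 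It remains to identify the limiting law of $\mathcal L_n(f)$.

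\emph{The Ginibre base case.} When $A=\AG$ and $B=\BG$, the matrix $\MG:=\AG(\BG)^{-1}$ is the spherical ensemble, whose eigenvalues form a determinantal point process with kernel $K_n(z,w)=\tfrac n\pi(1+z\bar w)^{n-1}\big((1+|z|^2)(1+|w|^2)\big)^{-(n+1)/2}$ relative to the Lebesgue measure. A direct computation shows that, under the rescaling $z=u_n(\xi)$, $w=u_n(\zeta)$, one has $\tfrac{(1+|\lambda_0|^2)^2}{n}K_n(u_n(\xi),u_n(\zeta))\to K(\xi,\zeta)$ locally uniformly up to a unimodular gauge factor of the form $\phi(\xi)\overline{\phi(\zeta)}$: the $(1+|\lambda_0|^2)$-prefactors cancel, while the $\sqrt n$-order terms and the (anti)holomorphic terms in the exponent assemble exactly into such a factor; hence the correlation functions converge. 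Therefore the rescaled spherical ensemble converges in distribution to $\mathrm{Gin}_\infty$, and in particular $\mathcal L_n^{\mathrm{Gin}}(f)\xrightarrow{\dd}\sum_{\xi\in\mathrm{Gin}_\infty}f(\xi)-\tfrac1\pi\int_\C f$.

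\emph{Universality, and the main obstacle.} It remains to show that $\mathcal L_n(f)$ has the same limit as $\mathcal L_n^{\mathrm{Gin}}(f)$. Write $\log|\det\wt G_u|=\tfrac12\log\det(\wt G_u\wt G_u^*)$ and regularize it at scale $\eta=n^{-1+\varepsilon}$: by the local law for the Hermitization of $\wt G_u$ (equivalently for the sample covariance matrix $\wt G_u\wt G_u^*$, whose spectral edge at $0$ corresponds to the bulk of the Hermitization) the resolvent is close to its deterministic limit down to spectral scales $\gg n^{-1+\varepsilon}$, and by the lower bound $\sigma_{\min}(\wt G_u)\ge n^{-C}$ with overwhelming probability — which follows from the bounded entry density \ref{it:condensity}, see \eg \cite{Davies} — the regularization error is negligible, uniformly in $w$ after a union bound over an $n^{-1/2}$-net of $\operatorname{supp} f$. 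For the regularized functional one performs a Lindeberg replacement, swapping the entries of $A$ and $B$ one at a time for Gaussian ones and expanding the relevant resolvent functionals: the contributions up to third order cancel and the fourth-order ones are negligible by \ref{it:condmom4} (with \ref{it:condmom} supplying moments for the Taylor remainder), so that $|\mathbb E\,\Phi(\mathcal L_n(f))-\mathbb E\,\Phi(\mathcal L_n^{\mathrm{Gin}}(f))|\to0$ for every bounded smooth $\Phi$, whence $\mathcal L_n(f)$ and $\mathcal L_n^{\mathrm{Gin}}(f)$ share the same limit. Combining the three steps proves the theorem. I expect this universality step to be the crux: one must control $\log|\det(A-uB)|$ jointly over the entire $n^{-1/2}$-window of values of $u$, handling simultaneously the near-zero singular values via the density bound and the fluctuations via the local law, and propagating the fourth-moment matching through the Hermitization.

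\emph{Remark.} The model is invariant in law under $M\mapsto M^{-1}$, again the ratio of two independent Girko matrices (with the roles of $A$ and $B$ exchanged); together with the rotational invariance of $\mathrm{Gin}_\infty$ this identifies the local picture at $\lambda_0$ with that at $1/\lambda_0$. This is the structural reason the limit does not depend on $\lambda_0$, and it makes edge and bulk equivalent — a feature to be exploited when passing from this statement to the limiting law of the spectral radius.
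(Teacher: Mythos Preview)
Your proposal is correct and takes essentially the same approach as the paper: Girko Hermitization of $A-zB$, regularization of the log-determinant via the local law and the smallest-singular-value tail bound from \cite{Davies}, a fourth-moment comparison to reduce to the Ginibre case, and kernel convergence of the spherical ensemble to $\mathrm{Gin}_\infty$. The only differences are in execution: the paper uses an Ornstein--Uhlenbeck interpolation with cumulant expansion rather than your Lindeberg entry-swap (and explicitly mentions the Lindeberg alternative), and for the Ginibre base case at a general $\lambda_0$ the paper first invokes the $\mathrm{SU}(2)$ symmetry \eqref{eq:equalitySU} of the spherical ensemble together with Lemma~\ref{lem:ChangeOfCoordinates} to reduce to $\lambda_0=0$, whereas you compute the rescaled kernel directly --- your Taylor expansion of $\tfrac{n}{2}\log(1+|u_n(w)|^2)$ is effectively the analytic counterpart of that symmetry.
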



\begin{theorem}[Spectral radius]\label{th:spherical:girko:rho}
 Let $M$ be as in Theorem \ref{th:spherical:girko:gininf}, and ${(\gamma_k)}_{k\geq1}$ independent with $\gamma_k\sim\mathrm{Gamma}(k,1)$. Then
 \begin{equation}\label{eq:spherical:girko:rho}
    \frac{\rho_{\max}(M)}{\sqrt{n}} 
    \xrightarrow[n\to\infty]{\dd}
    \mathrm{Law}\Bigr(\frac{1}{\sqrt{\min_{k\geq1}\gamma_k}}\Bigr)
    \quad\text{and}\quad
    \sqrt{n}\rho_{\min}(M)
    \xrightarrow[n\to\infty]{\dd}
    \mathrm{Law}\bigr(\sqrt{\min_{k\geq1}\gamma_k}\bigr).
 \end{equation}
\end{theorem}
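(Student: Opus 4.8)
The plan is to deduce both limits in \eqref{eq:spherical:girko:rho} from Theorem~\ref{th:spherical:girko:gininf} applied with $\lambda_0=0$, from Kostlan's description of the moduli of $\mathrm{Gin}_\infty$, and from the invariance of the model under matrix inversion $M\mapsto M^{-1}$. First I record what is needed about $\mathrm{Gin}_\infty$: expanding $\ee^{z\overline w}=\sum_{k\geq0}(z\overline w)^k/k!$ writes the kernel as $K(z,w)=\sum_{k\geq0}\phi_k(z)\overline{\phi_k(w)}$ with $\phi_k(z)=z^k\ee^{-|z|^2/2}/\sqrt{\pi k!}$, and since the radial part of $|\phi_k|^2$ in the variable $s=|z|^2$ is the $\mathrm{Gamma}(k+1,1)$ density, Kostlan's observation gives that the set $\{\,|z|^2:z\in\mathrm{Gin}_\infty\,\}$ of squared moduli has the law of $\{\gamma_k:k\geq1\}$ with independent $\gamma_k\sim\mathrm{Gamma}(k,1)$. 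Because $\gamma_k/k\to1$ a.s., the infimum $\min_{k\geq1}\gamma_k$ is attained; $\sum_{k\geq1}\mathbb{P}(\gamma_k<\varepsilon)\leq\sum_{k\geq1}\varepsilon^k/k!=\ee^\varepsilon-1\to0$ as $\varepsilon\to0$ forces $\min_{k\geq1}\gamma_k>0$ a.s.; $\min_{k\geq1}\gamma_k\leq\gamma_1<\infty$ a.s.; and $\mathbb{P}(\min_{k\geq1}\gamma_k=t)\leq\sum_k\mathbb{P}(\gamma_k=t)=0$. Hence $\rho_\infty:=\min_{z\in\mathrm{Gin}_\infty}|z|$ is a.s.\ a well-defined point of $(0,\infty)$ with $\rho_\infty^2\overset{d}{=}\min_{k\geq1}\gamma_k$, and the law of $\rho_\infty$ is atomless and carried by $(0,\infty)$.

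Next I reduce everything to the inner radius. The matrix $M^{-1}=BA^{-1}$ is again a ratio of two independent Girko matrices satisfying \ref{it:condensity}--\ref{it:condmom}, so Theorem~\ref{th:spherical:girko:gininf} applies to $M^{-1}$ as well; and since $M$ is a.s.\ invertible by \ref{it:condensity}, $\rho_{\max}(M)=\rho_{\min}(M^{-1})^{-1}$, i.e.\ $\rho_{\max}(M)/\sqrt n=\bigl(\sqrt n\,\rho_{\min}(M^{-1})\bigr)^{-1}$. As $t\mapsto1/t$ is continuous on $(0,\infty)$ and the limit claimed below is carried by $(0,\infty)$, the continuous mapping theorem shows that the first convergence in \eqref{eq:spherical:girko:rho} follows from the second one applied to $M^{-1}$, the pushforward of $\mathrm{Law}(\sqrt{\min_k\gamma_k})$ by $t\mapsto1/t$ being $\mathrm{Law}(1/\sqrt{\min_k\gamma_k})$. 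So it suffices to prove $\sqrt n\,\rho_{\min}(M)\xrightarrow[n\to\infty]{\dd}\sqrt{\min_{k\geq1}\gamma_k}$.

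For the inner radius, apply Theorem~\ref{th:spherical:girko:gininf} with $\lambda_0=0$: the configurations $X_n:=\sum_{\lambda\in\mathrm{spec}(M)}\delta_{\sqrt n\,\lambda}$ converge in distribution, in the vague topology, to $\mathrm{Gin}_\infty$. Fix $r>0$. Since $\sqrt n\,\rho_{\min}(M)>r$ means exactly that $\sqrt n M$ has no eigenvalue in the closed ball $\overline B(0,r)$, we get $\mathbb{P}(\sqrt n\,\rho_{\min}(M)>r)=\mathbb{P}(X_n(\overline B(0,r))=0)$. The intensity of $\mathrm{Gin}_\infty$ being the constant $K(z,z)=1/\pi$, we have $\mathrm{Gin}_\infty(\partial B(0,r))=0$ a.s., so $\overline B(0,r)$ is an a.s.-continuity set of the limit; by the standard consequence of vague convergence of random point measures, $X_n(\overline B(0,r))\to\mathrm{Gin}_\infty(\overline B(0,r))$ in distribution, and since both are $\mathbb{Z}_{\geq0}$-valued, $\mathbb{P}(X_n(\overline B(0,r))=0)\to\mathbb{P}(\mathrm{Gin}_\infty(\overline B(0,r))=0)=\mathbb{P}(\rho_\infty>r)=\mathbb{P}(\min_{k\geq1}\gamma_k>r^2)$. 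This holds for every $r>0$, so the complementary distribution functions of $\sqrt n\,\rho_{\min}(M)$ converge pointwise to that of the (genuine, by the first paragraph) law of $\sqrt{\min_{k\geq1}\gamma_k}$; this is the claimed convergence in distribution, and together with the reduction above it finishes the proof.

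The one step that is not purely formal is the passage from the vague convergence of Theorem~\ref{th:spherical:girko:gininf} to a statement about an extremal modulus, since vague convergence controls only relatively compact regions and a priori eigenvalues of $\sqrt n M$ could escape towards $\infty$. For $\rho_{\min}$ one only examines a fixed ball around the origin, which causes no trouble; for $\rho_{\max}$ it is precisely the inversion symmetry $M\leftrightarrow M^{-1}$ --- the ``edge equals bulk'' mechanism emphasised in the introduction --- that turns the escape-to-$\infty$ issue into the same innocuous question near the origin, while Kostlan's formula certifies that the candidate limit is a bona fide atomless law on $(0,\infty)$, so that no mass is lost at $0$ or at $\infty$. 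The hard analytic input, Theorem~\ref{th:spherical:girko:gininf} itself, is of course taken as given here.
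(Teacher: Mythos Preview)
Your proof is correct and follows essentially the same strategy as the paper's: reduce $\rho_{\max}$ to $\rho_{\min}$ via the inversion $M\mapsto M^{-1}=BA^{-1}$, deduce the behaviour of $\sqrt{n}\,\rho_{\min}(M)$ from Theorem~\ref{th:spherical:girko:gininf} at $\lambda_0=0$, and identify the limiting law through Kostlan's description of the moduli of $\mathrm{Gin}_\infty$. The only difference is a minor technical variation in the passage from point-process convergence to convergence of the minimum modulus: the paper invokes the continuity of the map ``configuration $\mapsto$ smallest modulus'' (citing a lemma from the literature), whereas you argue directly through void probabilities $\mathbb{P}(X_n(\overline{B(0,r)})=0)$ and the fact that $\overline{B(0,r)}$ is an a.s.-continuity set of the limit; these two routes are standard and equivalent.
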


\begin{remark}[Tail]
The laws of $R_0=\sqrt{\min_{k\geq1}\gamma_k}$ and $R_\infty=\displaystyle\frac{1}{R_0}$ have respective cdf given for all $x>0$ by 
\begin{equation}\label{eq:cdf}
 \mathbb{P}(R_0\leq x)
 =1-\prod_{k=1}^\infty (1-F_k)(x^2)
 \quad\text{and}\quad
 \mathbb{P}(R_\infty\leq x)
 =\prod_{k=1}^\infty (1-F_k)(x^{-2})
\end{equation}
where 
$1-F_k(x)=\mathbb{P}(\gamma_k\geq x)=\mathbb{P}(\pi_x<k)=\mathrm{e}^{-x}\sum_{j=0}^{k-1}\frac{x^{j}}{j!}$
with $\pi_x\sim\mathrm{Poisson}(x)$.
In particular $R_\infty$ is heavy-tailed as 
\begin{equation}
\mathbb{P}(R_\infty>x)\underset{x\to+\infty}{\sim}\frac{1}{x^2}.    
\end{equation}
We are beyond classical extreme value theory: ${(\xi_k)}_{k\geq1}$ are not i.i.d., $R_0$ is not Weibull, and $R_\infty$ is not Fréchet.
\end{remark}

\begin{remark}[Point process of farthest particles]
Actually, the proof of Theorem 
\ref{th:spherical:girko:rho} gives much more: we not
only obtain the limit of the farthest particle,
but also of the second farthest particle, 
the third and so on.
We may formulate this as saying that for every
continuous function $f:\mathbb C \to \mathbb R$
whose support does not contain the origin,
\[\sum_{\lambda \in \mathrm{spec}(M)}
f(\lambda/\sqrt n) \xrightarrow[n \to \infty]
{\mathrm d} \sum_{w \in \mathrm{Gin}_\infty} f(1/w).\]
Since we do not ask $f$ to be compactly supported,
we are able to obtain the information
of the farthest particles.
\end{remark}

Theorem \ref{th:spherical:girko:rho} establishes, for the first time and to some extent, the universality of the high-dimensional behavior of the spectral radius of the spherical ensemble, studied by \cite{MR3615091} when $A$ and $B$ are complex Ginibre matrices, see Section \ref{se:spherical}. Theorem \ref{th:spherical:girko:rho} is an analogue for the spherical ensemble of the universality of the Gumbel fluctuation for the complex Ginibre ensemble obtained by \cite{cipolloni2024universalityextremaleigenvalueslarge}. 

The proofs of Theorem \ref{th:spherical:girko:gininf} and Theorem \ref{th:spherical:girko:rho} are given in Section \ref{se:proof:th:spherical:girko:gininf+rho}. The strategy is as follows. By using the invariance of the model by inversion, we transform the behavior of the dilated spectral radius into the local behavior near the origin, more precisely a local gap probability at the origin. We then use the universality of the local behavior near the origin, stated as a replacement principle in Theorem \ref{th:replacement:simple} below. This reduces the problem to the local gap probability of the spherical ensemble, which in turn reduces to the infinite Ginibre ensemble by using kernel convergence of determinantal point processes. Finally, the local gap probability for the infinite Ginibre ensemble follows from the Kostlan observation of the moduli of the particles of this determinantal point process.

The spherical or Forrester--Krishnapur ensemble or model of dimension $n$ is the random matrix defined by
\begin{equation}\label{eq:spherical}
    \MG=\AG (\BG)^{-1}
\end{equation}
where $\AG$ and $\BG$ are independent $n\times n$ complex Ginibre matrices with i.i.d.\ $\mathcal{N}_{\mathbb{C}}(0,\frac{1}{2}\mathrm{Id}_2)$ entries.

\begin{theorem}[Replacement principle for local eigenvalue statistics near a point]\label{th:replacement:simple}  
Let $M=AB^{-1}$ be as in Theorem \ref{th:spherical:girko:gininf} and $\MG$ as in \eqref{eq:spherical}. Let $\mu_M$ and $\mu_{\MG}$ be the unnormalized empirical spectral measures as in \eqref{def_esd}. 
Then the following replacement principle on scale $\sqrt{n}$ occurs: for all $z_0 \in \mathbb C$ and all compactly supported $\mathcal{C}^\infty$ functions $f:\mathbb{C}\to\mathbb{R}$ and $F:\mathbb{R}\to\mathbb{R}$, 
    \[
    \lim_{n \rightarrow \infty}\Bigr(\mathbb{E}\Bigr[F\Bigr(  \int f(\sqrt n (z-z_0))\dd\mu_M(z)\Bigr)\Bigr]
    -\mathbb{E}\Bigr[F\Bigr( \int f(\sqrt n (z-z_0))\dd\mu_{\MG}(z)\Bigr)\Bigr]\Bigr) =0.
    \]
\end{theorem}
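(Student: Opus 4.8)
The plan is to reduce to the Hermitized model by Girko's formula and then use universality of the logarithmic determinant of a sample-covariance matrix. For $C^\infty$ compactly supported $f$ and fixed $z_0$, Girko's formula gives
\[
\int f(\sqrt n(z-z_0))\,\dd\mu_M(z)=\frac{1}{2\pi n}\int_{\mathbb C}(\Delta f)(w)\,\log\bigl|\det\bigl(M-z_0-\tfrac{w}{\sqrt n}\bigr)\bigr|\,\dd m(w),
\]
and since $M=AB^{-1}$ makes $\det(M-\zeta)=\det(A-\zeta B)/\det B$ while $\int(\Delta f)\,\dd m=0$, the $w$-independent term $\log|\det B|$ drops out:
\[
\int f(\sqrt n(z-z_0))\,\dd\mu_M(z)=\frac{1}{2\pi n}\int_{\mathbb C}(\Delta f)(w)\,\log\bigl|\det X_{\zeta(w)}\bigr|\,\dd m(w)=:\Phi(A,B),\qquad X_\zeta:=A-\zeta B,\ \ \zeta(w):=z_0+\tfrac{w}{\sqrt n}.
\]
The structural point that makes the ratio model friendly is that, $A$ and $B$ being independent Girko matrices, $X_\zeta$ is \emph{again} a matrix with i.i.d.\ entries: mean zero, variance $1+|\zeta|^2$, density bounded by $\max(D_0,\pi^{-1})$, and --- by a one-line computation using independence --- matching the moments of $\sqrt{1+|\zeta|^2}$ times a standard complex Gaussian up to fourth order, with exactly the error rates of \ref{it:condmom4}, all uniformly over the $O(n^{-1/2})$-ball around $z_0$ that $\zeta(w)$ sweeps. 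Since $\log|\det X_\zeta|=\tfrac12\log\det(X_\zeta X_\zeta^*)=\sum_j\log s_j(X_\zeta)$ and $X_\zeta X_\zeta^*$ is a square sample-covariance matrix, the theorem becomes $\mathbb E[F(\Phi(A,B))]-\mathbb E[F(\Phi(\AG,\BG))]\to0$, with all matrix inverses gone.

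Because $\zeta(w)$ only ranges over an $O(n^{-1/2})$-ball, the matrices $X_{\zeta(w)}$ differ from $X_{z_0}$ by $O(n^{-1/2})$ perturbations, so $\log|\det X_{\zeta(w)}|$, $s_{\min}(X_{\zeta(w)})$ and the relevant resolvents are $n^{O(1)}$-Lipschitz in $w$; a polynomial net over the compact support of $\Delta f$ then reduces everything to the single point $z_0$. The first needed input is the \emph{least-singular-value bound}: condition \ref{it:condensity} (bounded entry density), through the Davies-type estimate applied to $X_\zeta$, gives $\mathbb P(s_{\min}(X_\zeta)\le n^{-C})\le n^{-c}$ for a suitable $C$, uniformly in $\zeta$ near $z_0$; combined with a crude deterministic truncation (using \ref{it:condmom} to truncate entries) this confines the dangerous small singular values so that $\sum_j\log s_j(X_\zeta)$ becomes amenable to moment and comparison estimates.

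The second input is \emph{universality of the log-determinant}. For a square sample-covariance matrix, $\log\det(X_\zeta X_\zeta^*)=2\sum_j\log s_j(X_\zeta)$ obeys a central limit theorem whose limiting mean and variance are stable under four-moment matching; this follows by gluing the local law for Wigner-type matrices (controlling the Stieltjes transform down to scale $n^{-1+\varepsilon}$) to the least-singular-value bound (controlling the hard edge $x=0$), the discrepancies $\mathcal E_3\le n^{-1/2-c_0}$, $\mathcal E_4\le n^{-c_0}$ perturbing the limiting parameters only by $n^{-c}$. Feeding this universal behaviour into $\Phi$, using $\mathcal E_1=\mathcal E_2=0$ to kill the leading corrections and \ref{it:condmom} to absorb the tails, gives $\mathbb E[F(\Phi(A,B))]-\mathbb E[F(\Phi(\AG,\BG))]\to0$; on the Gaussian side $X_\zeta=\AG-\zeta\BG$ is merely $\sqrt{1+|\zeta|^2}$ times a standard complex Ginibre matrix, a clean reference point. (For the bare statement as written one may alternatively argue softly: once a uniform a priori bound on the eigenvalue one-point density of $M$ near $z_0$ is available --- itself a by-product of the local law for $M$ obtained through the same Hermitization, local-law and smallest-singular-value inputs --- one has $\Phi=\int f(\sqrt n(z-z_0))\,\dd\mu_M\to0$ in $L^1$ for both ensembles, hence $\mathbb E[F(\Phi)]\to F(0)$ by bounded convergence; it is the quantitative comparison above that the applications require.)

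The main obstacle is the hard edge. The local law for $X_\zeta X_\zeta^*$ is informative only on spectral scales $\ge n^{-1+\varepsilon}$ and is silent near $x=0$, yet $\log|\det X_\zeta|$ is dominated by the smallest singular values; interlocking the two forces the density-based $s_{\min}$ bound to hold \emph{uniformly} over the shrinking $\zeta$-neighbourhood, over the intermediate matrices along any comparison path, and with enough polynomial room to beat the resolvent losses near the edge, and it requires the hard-edge fluctuation theory for square sample-covariance matrices to be available in four-moment-stable form. A reassurance to close with: heavy tails are invisible in this step --- after Hermitization one only ever faces the bounded-variance Girko matrix $X_\zeta=A-\zeta B$ --- and the heavy-tailed behaviour of $M$ re-emerges only afterwards, when the gap probability at the origin is converted into the spectral radius via the inversion symmetry of the spherical model.
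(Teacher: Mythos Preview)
Your Girko-formula reduction and the structural observation that $X_\zeta=A-\zeta B$ is again an i.i.d.\ matrix satisfying \ref{it:condensity}--\ref{it:condmom} (with variance $1+|\zeta|^2$ and the same moment-matching errors) are correct and match the paper's setup. Your parenthetical soft argument is also essentially correct for the statement \emph{as literally written}: with the normalization $\mu_M=\frac1n\sum\delta_\lambda$, the linear statistic $\Phi=\int f(\sqrt n(z-z_0))\,\dd\mu_M$ is $O(1/n)$ once a bounded one-point intensity near $z_0$ is available, so both expectations tend to $F(0)$. As you yourself note, this is vacuous for the applications, which need the un-normalized statistic $n\Phi=\sum_\lambda f(\sqrt n(\lambda-z_0))$; the paper's Theorem~\ref{th:replacement} is the quantitative version that actually does the work.

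Your hard argument, which is what that quantitative version requires, has a gap at the comparison step. The phrase ``a polynomial net over the compact support of $\Delta f$ then reduces everything to the single point $z_0$'' is at best a uniformity device (to make high-probability bounds hold simultaneously in $\zeta$), not a reduction of the quantity itself: $\Phi$ is an integral of $\log|\det X_{\zeta(w)}|$ against the \emph{signed} weight $\Delta f$, and the family $\{X_{\zeta(w)}\}_w$ is coupled through the common $(A,B)$. The cancellation from $\int\Delta f=0$ is exactly what keeps $n\Phi$ of order $O(1)$; any comparison done one $\zeta$ at a time loses it. Likewise, a CLT for a single $\log\det(X_\zeta X_\zeta^*)$ (fluctuations of order $\sqrt{\log n}$) is not the right tool --- what is needed is a \emph{simultaneous} comparison of the whole field $\zeta\mapsto\log|\det X_\zeta|$ with its Gaussian counterpart, and your sketch does not supply a mechanism for this.

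The paper's mechanism is different and explicit. After Hermitization it first \emph{regularizes}: for $\eta_0=n^{-1-\epsilon}$,
\[
n\Phi \;\approx\; I^{\ge\eta_0}
\;:=\;-\frac{1}{4\pi}\int \Delta f_n(z)\int_{\eta_0}^{T}\Im\operatorname{Tr}G^z(\ii\eta)\,\dd\eta\,\dd^2 z,
\]
the error $I^{\le\eta_0}$ being controlled in $L^1$ by the Davies-type least-singular-value bound from \ref{it:condensity} together with local-law rigidity (this is where your ``main obstacle'' is actually dispatched, and the tail bound $\mathbb P(\lambda_1^z\le t)\lesssim n^2t^2$ is needed quantitatively, not just $\mathbb P(s_{\min}\le n^{-C})\le n^{-c}$). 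The comparison of $\mathbb E[F(I^{\ge\eta_0})]$ with its Ginibre version is then done by an \emph{Ornstein--Uhlenbeck interpolation}: running independent matrix OU flows on $A$ and $B$, one computes $\frac{\dd}{\dd t}\mathbb E[F(I^{\ge\eta_0}_t)]$ via It\^o's formula and a cumulant expansion in the entries of $X^z_t$; the second-order terms cancel because the OU flow preserves variances, and the third/fourth-order terms are small precisely by \ref{it:condmom4}, with resolvent entries bounded by the local law extended down to $\eta_0$. This continuous Green-function comparison is the concrete content that replaces your unspecified ``universality of the log-determinant.''
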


Theorem \ref{th:replacement:simple} is a simplified version of a more technical result stated in Theorem \ref{th:replacement} and proved in Section \ref{se:proof:th:replacement:simple}. It can be seen as a fourth moment theorem for the spherical ensemble. 
The proof involves Ornstein--Uhlenbeck interpolation between Gaussian and non-Gaussian, a cumulant expansion exploiting the fourth moment matching \ref{it:condmom4}, a local law for Wigner matrices and its consequence in terms of rigidity, and a precise lower bound on the smallest singular value.
The continuous interpolation that we use, based on the Ornstein--Uhlenbeck diffusion, differs from the original Lindeberg-type entry by entry replacement used by \cite{MR3306005} in their original fourth moment theorem. 

\begin{remark}[Real case]\label{rem:real}
 Though lacking analogous versions of Theorems \ref{th:spherical:girko:gininf}-\ref{th:spherical:girko:rho} for the real spherical ensemble, the replacement principle stated in Theorem \ref{th:replacement:simple} applies to the 
real case where $M=AB^{-1}$ with $A,B$ being real Girko matrices, and $A^{\mathrm{Gin}}$ and
$B^{\mathrm{Gin}}$ being real Ginibre matrices. The proof is essentially the same, with slight modifications.
  Indeed, the eigenvalues of the real spherical ensemble can be studied using Pfaffians instead
 of determinants as explained in
 \cite{zbMATH06125015}.

\end{remark}

\subsection{Comments and open problems}

\subsubsection{Model.} 
We could use the alternative definitions $M=A^{-1}B$ instead of $M=AB^{-1}$. Indeed $XY$ and $YX$ have same spectrum thanks to the Sylvester identity $\det(XY-z\mathrm{Id})=\det(YX-z\mathrm{Id})$. 

\subsubsection{First order universality.}
The universality of the limiting spectral distribution of the model was proved in \cite{MR2772389}, using logarithmic potential, Girko Hermitization, and Tao and Vu replacement principle. The limit can be interpreted by using the notions of freeness and Brown measure from Free Probability Theory.

\subsubsection{Inner radius of Girko matrices.}
The analogue of Theorem \ref{th:spherical:girko:gininf} for a single Girko matrix $A$ follows from \cite[Th.~2.1~and~2.2]{maltsev-osman}, under a second moment matching assumption. This leads to the universality of the fluctuation of the inner spectral radius, which meets the gap at the origin of $\mathrm{Gin}_\infty$, namely
\begin{equation}
    \sqrt{n}\rho_{\min}(A)
    \xrightarrow[n\to\infty]{\dd}
    \mathrm{Law}\Bigr(\sqrt{\min_{k\geq1}\xi_k}\Bigr)
    \quad\text{where}\quad
    \text{${(\xi_k)}_{k\geq1}$ are independent with $\xi_k\sim\mathrm{Gamma}(k,1)$.}
\end{equation}

\subsubsection{Moments assumptions.} The fourth moment matching in \ref{it:condmom4} helps to reduce the complexity of the proof of Theorem \ref{th:replacement}, essentially for  the bulk universality for Girko matrices, see for instance \cite{MR3306005}. Since bulk universality for Girko matrices with a second moment matching was proved recently in \cite{maltsev-osman}, we expect that Theorem \ref{th:spherical:girko:rho} also holds 
under the assumption of second moment matching only. 

\subsubsection{Characteristic polynomials}
It is natural to ask about the asymptotic analysis of the characteristic polynomial of $AB^{-1}$, in the spirit of
\cite{MR4408512}, with a random analytic object as a limit. Taking the modulus and the logarithm would then recover the CLT for the log-potential. This seems to be open even for the spherical model.

\subsubsection{Ratio of other models} We could ask about the behavior of the spectral radius of $AB^{-1}$ if $A$ and $B$ are independent real or quaternion Girko matrices, and, in another direction, real/complex/quaternion Wigner matrices. As mentioned in Remark \ref{rem:real}, the replacement principle is expected to be true with slight modifications, though the statements for the ratios of Ginibre/Gaussian ensembles are missing.

\subsection{Conventions and notation}

The convergence in distribution of a sequence of random variables is the weak convergence of their laws, with respect to continuous and bounded test functions, towards a probability distribution. We denote it by a roman letter $\dd$ over ``$\to$''. Similarly, we use $\dd$ over ``$=$'' for the equality in distribution. 

For integers $k,l\in\N$, with $k<l$, we write 
$\llbracket k,l \rrbracket= \{k,\dots, l\}$ and 
$\llbracket k \rrbracket=
\llbracket 1,k \rrbracket$.

We set $\dd^2 z= \frac{1}{2}\ii(\dd z\wedge \dd \overline{z})$ for the two dimensional area form on $\C$, matching the Lebesgue measure $\dd x\dd y$ on $\mathbb{R}^2$.

The two-dimensional Laplace operator is $\Delta=\Delta_z=4\partial_z \partial_{\bar z}=\partial^2_{\Re z}+\partial^2_{\Im z}$ denotes the Laplace operator. 

For any vector $\mathbf{x} \in \C^{d}$ and matrix $A\in \C^{d \times d}$, $d\in \N$, we use $\lVert\mathbf{x}\rVert$ and $\lVert A\rVert$ indicate the usual Euclidean norm and the corresponding induced matrix norm (also known as operator norm). 
Throughout the paper $c,C>0$ denote small and large constants, respectively, which may change from line to line.  For positive quantities $f,g$ we write $f\lesssim g$ and $f\sim g$ if $f \le C g$ or $c g\le f\le Cg$, respectively, for some constants $c,C>0$ independent of $n$. Furthermore, for $n$--dependent positive
quantities $f_n,g_n$ we use the notation $f_n\ll g_n$ to denote that $\lim_{n\to\infty} (f_n/g_n)=0$.

We will often use the concept of ``with very high probability'' for an $n$-dependent event, meaning that for any fixed $D>0$ the probability of the event is bigger than $1-n^{-D}$ when $n\ge n_0(D)$. We recall the standard notion of \emph{stochastic domination}:  
given two families of non-negative random variables
\[
X=\left(X^{(n)}(u) \,:\, n\in\N, u\in U^{(n)}\right)\quad\text{and}\quad Y=\left(Y^{(n)}(u) \,:\, n\in\N, u\in U^{(n)}\right)
\] 
indexed by $n$ (and possibly some parameter $u$  in some parameter space $U^{(n)}$), 
we say that $X$ is {\it stochastically dominated} by $Y$, if for any small $\xi$ and large $D>0$ we have 
\begin{equation}
	\label{prec}
	\sup_{u\in U^{(n)}} \mathbb{P}\left[X^{(n)}(u)>n^\xi  Y^{(n)}(u)\right]\le n^{-D}
\end{equation}
for large enough $n\geq n_0(\xi,D)$. In this case we use the notation $X\prec Y$ or $X= O_\prec(Y)$
and we say that $X(u)\prec Y(u)$ {\it holds uniformly} in $u\in U^{(n)}$. Properties of stochastic domination can be found in \cite[Proposition 6.5]{zbMATH06780221}. We often use the notation $\prec$ also for deterministic quantities, then~\eqref{prec} holds with probability one. 

\subsection*{Acknowledgments}

{\small The authors were supported by the Swedish Research Council under grant no.\ 2021-06594 while in residence at Institut Mittag-Leffler in Djursholm, Sweden during the fall semester of 2024, Program "Random Matrices and Scaling Limits" organized by 
Maurice Duits, Kurt Johansson, Gaultier Lambert, and Kevin Schnelli, KTH Royal Institute of Technology. Y.X. is partially supported by Grant No. 2024YFA1013503 from National Key R\&D Program of China.}

\begin{figure}[htbp]
 \centering
 \includegraphics[width=.7\textwidth]{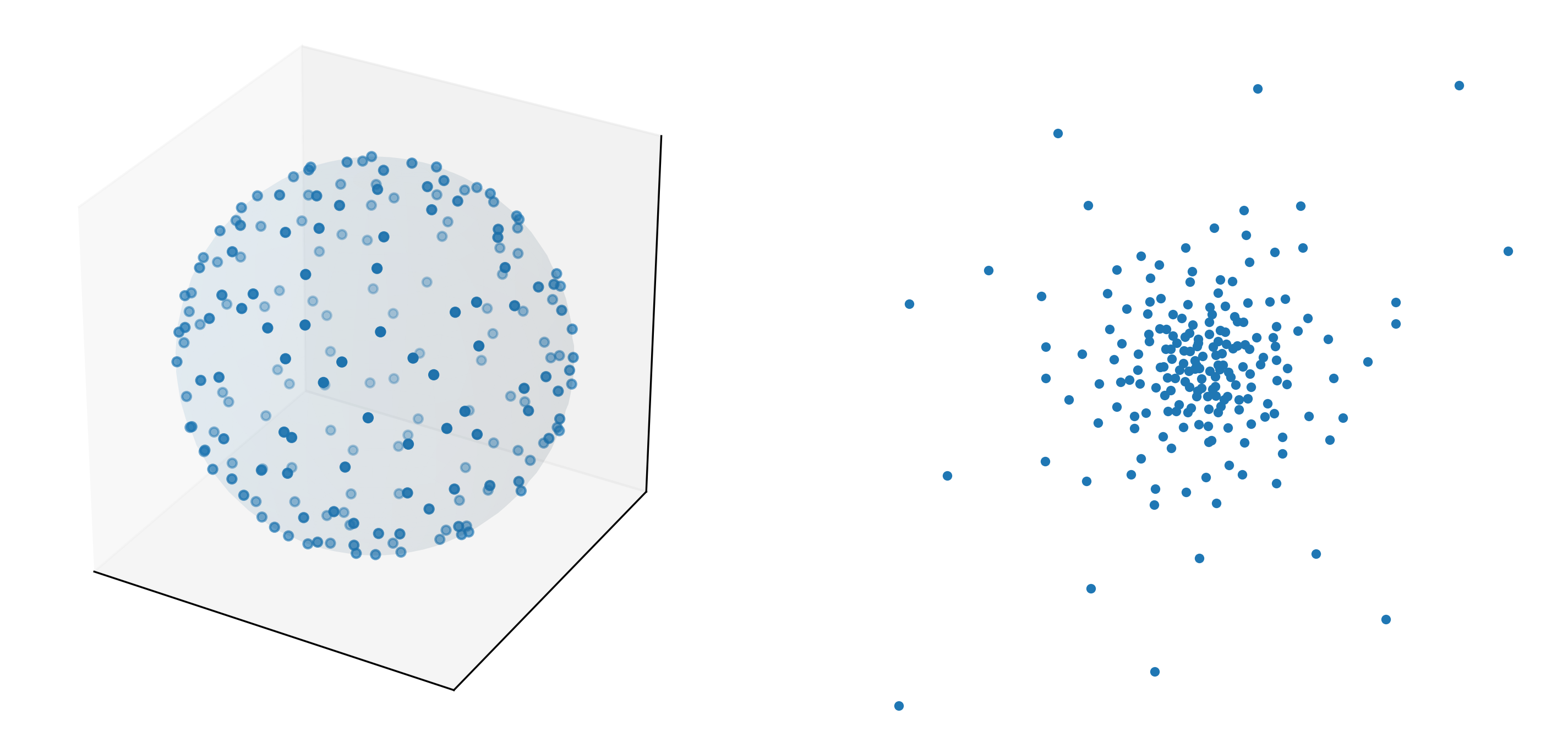}
 \caption{\label{fi:spherical}Sample of the spherical ensemble and its planar stereographic projection.}
\end{figure}

\begin{figure}[htbp]
  \centering
  \begin{tikzpicture}[scale=1.8]
    \draw[thick] (0,0) circle(1);
    
    \draw[->] (-1.2,0) -- (4,0) node[right] {$z$};
    \draw[->] (0,-0.2) -- (0,1.4) node[above] {$x_3$};
    
    \coordinate (N) at (0,1);               
    \coordinate (O) at (0,0);               
    \coordinate (P) at ({sqrt(2)/2}, {sqrt(2)/2});      
	\coordinate (Pproj) at ({1/(sqrt(2)-1)}, 0);            
    
    \draw[dashed] (N) -- (Pproj);          
    
    \fill (O) circle(0.04) node[below left] {$0$};
    \fill (N) circle(0.04) node[above right] {$N$};
    \fill[blue] (P) circle(0.04) node[above right] {$P$};
    \fill[blue] (Pproj) circle(0.04) node[below] {$T(P)$};
  \end{tikzpicture}
  \caption{\label{fi:stereo}The point $T(P)\in\mathbb{C}$ is the image of
    $P\in\mathbb{S}^2\setminus\{N\}$ by the stereographic projection or
    transform $T$ with respect of the north pole $N$, see \eqref{eq:T}. See
    also \cite{zbMATH07645442}. The image of $N$ is $\infty$. For gases, the
    gap around $N$ on $\mathbb{S}^2$ is in correspondence with the gap around
    $\infty$ on $\mathbb{C}\cup\{\infty\}$.}
\end{figure}
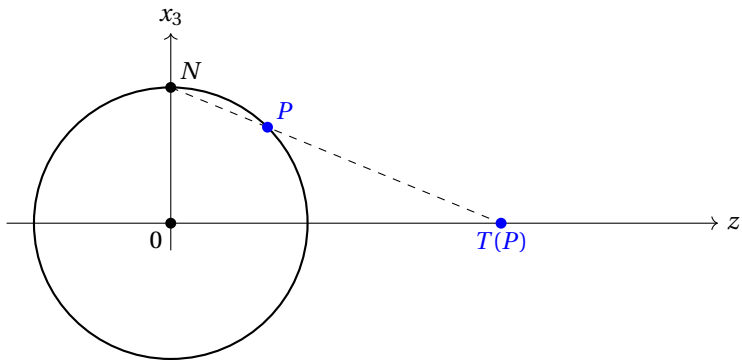

\begin{figure}[htbp]
    \centering
    \includegraphics[width=0.6\linewidth]{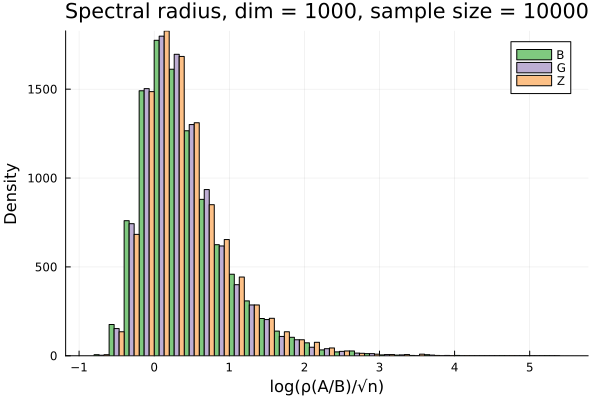}
    \includegraphics[width=0.6\linewidth]{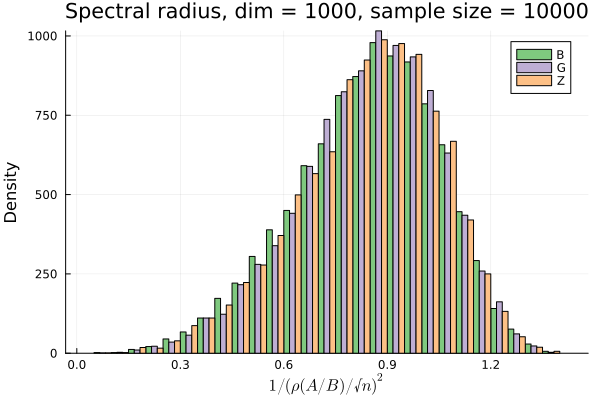}    
    \caption{Histograms illustrating the universality of the high dimensional behavior, for three choices of entries law : symmetric complex Bernoulli or Rademacher, isotropic complex Gaussian, and symmetric complex discrete heavy tailed (same tail as Zipf or Pareto, $\alpha=4$). For the Bernoulli and Zipf cases, the matrices are also conditioned to be nonsingular by using rejection sampling.}
    \label{fig:placeholder}
\end{figure}

\section{The spherical ensemble}\label{se:spherical} 

This section gathers useful properties of the spherical ensemble, in potential theory, geometry, and probability.

\subsection{Coulomb gas and determinantal structure}

The spherical or Forrester--Krishnapur random matrix ensemble \eqref{eq:spherical} is considered for instance in \cite{MR2552864,MR2641363}. The law of $\MG$ inherits the biunitary invariance : invariance by multiplication from the left and from the right by deterministic unitary matrices. The spectrum of $\MG$ forms a Coulomb gas on $\mathbb{C}$ with density\footnote{By conjugacy with an independent Haar unitary matrix, this can also be seen as the spectrum of a random normal matrix model, which should not be confused with the random non-normal matrix model obtained by taking the ratio of two independent complex Ginibre matrices.} 
$\varphi_n:\mathbb{C}^n\mapsto(0,+\infty)$ given by
\begin{equation}\label{eq:spherical:density}
\varphi_n(z_1,\ldots,z_n)=c_n\frac{\prod_{i<j}|z_i-z_j|^2}{\prod_{i=1}^n(1+|z_i|^2)^{n+1}}
=c_n\mathrm{e}^{-2(n+1)\sum_{i=1}^nQ(|z_i|)}
\prod_{i<j}|z_i-z_j|^2,
\end{equation}
where $c_n$ is a normalizing constant. This is a planar Coulomb gas with rotationally invariant potential 
\begin{equation}
Q(|z|)=\frac{1}{2}\log(1+|z|^2).
\end{equation}
This is also a determinantal point process of $n$ particles on $\mathbb{C}$ 
endowed with the Lebesgue measure with kernel\footnote{Contrary to \cite{MR2552864}, we use here as a background the uniform distribution on the sphere in stereographic coordinates.}
\begin{equation}\label{eq:spherical:kernel}
    (z,w)\in\mathbb{C}^2\mapsto
    K_n(z,w)=\sqrt{\kappa(z)\kappa(w)}
    n\Bigr(\frac{1+z\overline{w}}{\sqrt{(1+|z|^2)(1+|w|^2)}}\Bigr)^{n-1}
    \quad\text{where}\quad
    \kappa(z)=\frac{1}{\pi(1+|z|^2)^2}.
\end{equation}
This means that the joint density in \eqref{eq:spherical:density} is given by
\begin{equation}
\varphi_n(z_1,\ldots,z_n)=\frac{1}{n!}\det\bigr[K_n(z_i,z_j)\bigr]_{1\leq i,j\leq n}.
\end{equation}
The function $\kappa$ in \eqref{eq:spherical:kernel} is a density on $\mathbb{C}$ with respect to the Lebesgue measure. The probability measure 
\begin{equation}
    \dd\nu(z)=\kappa(z)\dd^2z
\end{equation}
is known as a bivariate Student t distribution in Statistics and as a Barenblatt profile in Analysis of PDE (fast diffusion equation\footnote{See for instance \cite{cutfast} and references therein.}). 
It is heavy tailed. It is the image by the (north pole) stereographic projection \begin{equation}
    T:\mathbb{S}^2\subset\mathbb{R}^3\to\mathbb{C}\cup\{\infty\}
\end{equation}
of the uniform probability measure on the sphere $\mathbb{S}^2$. 
More precisely, for all $x=(x_1,x_2,x_3)\in\mathbb{S}^2\setminus\{(0,0,1)\}$ and $z\in\mathbb{C}$,
\begin{equation}\label{eq:T}
    T(x)=\frac{x_1+\mathrm{i}x_2}{1-x_3}
    \quad\text{and}\quad
    T^{-1}(z)
    =\frac{(2 \Re z, 2 \Im z,|z|^2-1)}{|z|^2+1}.
\end{equation}
In other words, this measure is the uniform probability distribution on the sphere written in stereographic coordinates. Furthermore, the image of the spherical ensemble by $T^{-1}$ is the gas on $\mathbb {S}^2$ with density with respect to the
uniform measure given, up to a multiplicative normalizing constant, by
\begin{equation}\label{eq:spherical:density:sphere}
(x_1,\ldots,x_n)\in(\mathbb{S}^2)^n
\mapsto\prod_{i<j}\|x_i-x_j\|^2_{\mathbb{R}^3},
\end{equation}
hence the name of the ensemble. This can also be seen as a two-dimensional analogue of the circular unitary ensemble (CUE). For all these aspects, we refer to \cite[Sec.~4.3.8]{MR2552864}. This point process on the $\mathbb{S}^2$ is invariant by isometries,
which allows us to study
the behavior near $\infty$ by dealing with
the behavior near $0$. This symmetry is a key ingredient of our proof of Theorem \ref{th:spherical:girko:rho}. 

When $R$ runs over the rotations of $\mathbb{S}^2$, then $T\circ R\circ T^{-1}$ runs over the maps of $\mathbb{C}\cup\{\infty\}$ of the form
\begin{equation}    \label{eq:su}
z\mapsto\frac{\alpha z+\beta}{-\overline{\beta}z+\overline{\alpha}}, \quad (\alpha,\beta)\in\mathbb{C}^2\setminus \{(0,0)\}.
\end{equation}
The invariance by scaling allows to take $|\alpha|^2+|\beta|^2=1$.
More information may be found in \cite{zbMATH07645442,zbMATH03176754}. 
As a consequence, 
if $Z$ is the planar point process or Coulomb gas \eqref{eq:spherical:density}, then for every
$(\alpha,\beta)$ as above,
\begin{equation} \label{eq:equalitySU}
    \frac{\alpha Z+\beta}{-\overline{\beta}Z+\overline{\alpha}}
    \overset{\dd}{=}
    Z.
\end{equation}
It is convenient to think of
$\mathbb C \cup \{\infty\}$ as
the projective line
$\mathbb CP^1 =
(\mathbb C^2\setminus \{(0,0)\})
/\sim$, where the relation $\sim$ defining the quotient is given by $(z_1,z_2) \sim (w_1,w_2)$
if they are colinear : the equivalence class $[(z,w)]$ 
should be thought of as the 
complex line passing
through the origin and $(z,w)$. A usual way to 
identify them is via the map
$z \mapsto [(z,1)]$ which takes the 
metric $(1+|z|^2)^{-2} |\mathrm d z|^2$
to the Fubini--Study metric\footnote{We
quickly recall a definition of
the Fubini--Study metric. Fix $\ell = [(z,w)]$ thought
of as a subspace of $\mathbb C^2$ and
let us define a Hermitian metric on 
the tangent space $T_\ell \mathbb CP^1$.
Noticing that $T_\ell \mathbb CP^1$ is
canonically identified
with the space of linear maps from
$\ell$ to $\mathbb C^2/\ell$, the
Hermitian metric on $\mathbb C^2$
induces one on $T_\ell \mathbb CP^1$. For another standard definition
using a local potential see 
\cite[Example 3.1.9]{fubini}.},
the metric of
$\mathbb CP^1$ induced from the Hermitian product
of $\mathbb C^2$.
In this way, the invariance under
rotations jumps out at us.
Take any Hermitian product on 
$\mathbb C^{n \times n}$
and endow $(\mathbb C^{n \times n})^2$
with the product Hermitian product or,
what is the same, 
endow 
$\mathbb C^{n \times n} \otimes \mathbb C^2$
with the tensor product of 
the Hermitian product of $\mathbb C^{n \times n}$
and the standard one of $\mathbb C^2$.
Then, the map
$(\mathbb C^{n \times n})^2
\to (\mathbb C^{n \times n})^2$,
\[(X,Y) \mapsto (\alpha X + \beta Y, 
-\overline \beta X + \overline \alpha Y),\]
preserves the Hermitian product of
$(\mathbb C^{n \times n})^2$ so that
if $(X,Y)$ follows a standard Gaussian law
in $(\mathbb C^{n \times n})^2$, its image
$(\alpha X + \beta Y, 
-\overline \beta X + \overline \alpha Y)$ also does.
This implies that
$XY^{-1} \overset{\dd}{=}
(\alpha X + \beta Y)(-\overline \beta X + \overline \alpha Y)^{-1} = 
(\alpha XY^{-1} + \beta)(-\overline \beta X Y^{-1}
+ \overline \alpha )^{-1}$.

\subsection{Kostlan observation and spectral radius}

Following \cite{MR1148410,MR2552864}, if $Z_n=(Z_{n,1},\ldots,Z_{n,n})$ is a random vector of $\mathbb{C}^n$ with density \eqref{eq:spherical:density}, then the determinantal structure and rotational invariance in \eqref{eq:spherical:density} give 
\begin{equation}\label{eq:kostlan:spherical}
\{|Z_{n,1}|,\ldots,|Z_{n,n}|\}\overset{\dd}{=}\{\xi_{n,1},\ldots,\xi_{n,n}\}
\end{equation}
where $\xi_{n,1},\ldots,\xi_{n,n}$ are independent and $\xi_{n,k}$ has density proportional to 
\begin{equation}
    x\geq0\mapsto x^{2k-1}\mathrm{e}^{-2(n+1)Q(x)}=\frac{x^{2k-1}}{(1+x^2)^{n+1}}.
\end{equation}
It follows that the random variable $\xi_{n,k}^2$ has density proportional to
\begin{equation}\label{eq:xi2}
  x\geq0\mapsto\frac{x^{k-1}}{(1+x)^{n+1}}.
\end{equation}
We recognize a Beta prime (or inverse Beta or Beta of the second kind)
distribution of density
\begin{equation}
  x\geq0\mapsto\frac{\Gamma(a+b)}{\Gamma(a)\Gamma(b)}\frac{x^{a-1}}{(1+x)^{a+b}},
\end{equation}
with $a=k$ and $b=n-k+1$, which is also the law of $B/(1-B)$ when
$B\sim\mathrm{Beta}(a,b)$, and also the law of $G_a/G_b$ were
$G_a\sim\mathrm{Gamma}(a,\lambda)$ and $G_b\sim\mathrm{Gamma}(b,\lambda)$ are
independent and $\lambda>0$ is an arbitrary scale parameter.

Moreover, we also note at this step that
$n/\xi_{n,n-k+1}^2\overset{\mathrm{d}}{\to}\mathrm{Gamma}(k,1)$ as $n\to\infty$.
Following \cite{MR3215627,MR3615091}, it is possible to get
 from \eqref{eq:kostlan:spherical} the asymptotic fluctuations of the
 spectral radius, namely
\begin{equation}\label{eq:spherical:rho}
    \frac{1}{\sqrt{n}}\rho(\MG)
    =\frac{1}{\sqrt{n}}\max_{1\leq k\leq n}|Z_{n,k}|
    \overset{\mathrm{d}}{=}
    \frac{1}{\sqrt{n}}\max_{1\leq k\leq n}\xi_{n,k}
    \xrightarrow[n\to\infty]{\mathrm{d}}
    \mathrm{Law}\Bigr(\max_{k\geq1}\frac{1}{\sqrt{\gamma_k}}\Bigr)
\end{equation}
where ${(\gamma_k)}_{k\geq1}$ are independent with $\gamma_k\sim\mathrm{Gamma}(k,1)$.
This is an alternative approach to the one using \eqref{eq:kostlan:gininf}.

\subsection{Equilibrium measure}

Regarding high dimensional asymptotic analysis, almost surely,
\begin{equation}\label{eq:spherical:mu}
\frac{1}{n}\sum_{k=1}^n\delta_{Z_{n,k}}
\xrightarrow[n\to\infty]{\text{weak}}
\frac{\Delta Q(\left|\cdot\right|)}{2\pi}\dd^2z
=\frac{\dd^2z}{\pi(1+|z|^2)^2}
=\nu,
\end{equation}
see for instance \cite{MR2926763}. The average version of \eqref{eq:spherical:mu} can be checked using the logarithmic potential and remains valid non asymptotically. Indeed, for all $z\in\mathbb{C}$,
$\BG$ and $\AG-z\BG$ are correlated, but by Gaussianity,
\begin{equation}
(\MG-z)\BG
=\AG-z\BG
\overset{\dd}{=}
\sqrt{1+|z|^2}\AG.
\end{equation}
Therefore, for all $z\in\mathbb{C}$, in $[-\infty,+\infty)$,
\begin{equation}\label{eq:E:logdet:spherical}
\mathbb{E}\log|\det(\MG-z)|+\mathbb{E}\log|\det(\BG)|
=n\log\sqrt{1+|z|^2}+\mathbb{E}\log|\det(\AG)|.
\end{equation}
Finally, by applying the operator $\frac{1}{2\pi}\Delta$ in the sense of distributions, we obtain the stunning non-asymptotic formula
\begin{equation}\label{eq:EESD:spherical}
\mathbb{E}\Bigg[ \frac{1}{n}\sum_{k=1}^n\delta_{Z_{n,k}} \Bigg] =\frac{\Delta\log(1+|z|^2)}{4\pi}\dd^2z=\frac{\dd^2z}{\pi(1+|z|^2)^2}=\nu.
\end{equation}
Alternatively, we could use the fact that the uniform distribution on
$\mathbb{S}^2$ is the unique distribution on $\mathbb{S}^2$ invariant by all
rotations, together with the fact that its image by the stereographic
projection $T$ is precisely $\nu$.

This formula is well known; see, for instance, \cite{MR2489167}, \cite{zbMATH06425285}, and \cite{zbMATH06125015}. More generally, if $A$ and $B$ are Girko matrices, then $A-zB=\sqrt{1+|z|^2}C_z$ where $C_z$ is a Girko matrix, but with a law that depends on $z$ in general, except in the Gaussian case. Also the argument above works only in the Ginibre case.

\section{Proof of Theorem \ref{th:spherical:girko:gininf} and Theorem \ref{th:spherical:girko:rho}}
\label{se:proof:th:spherical:girko:gininf+rho}

The key idea in a nutshell is as follows: if $A$ and $B$ are independent and have the same law, then
\begin{equation}\label{eq:sphere:rho:inverse}
    M^{-1}\overset{\dd}{=}M,\quad\text{which gives}
    \quad\rho_{\max}(M)\overset{\dd}{=}\rho_{\max}(M^{-1})
    =\max_{\lambda\in\mathrm{spec}(M)}\frac{1}{|\lambda|}
    =\frac{1}{\min_{\lambda\in\mathrm{spec}(M)}|\lambda|}
    =\frac{1}{\mathrm{dist}(0,\mathrm{spec}(M))}.
\end{equation}
This remains valid when $A$ and $B$ do not have the same law, provided that in the right hand side of the equality in distribution symbols we modify $M$ by swapping the laws of $A$ and $B$, which is not a problem for our universality objective. In other words, the model $M$ is globally invariant in law by inversion. Geometrically, the inversion exchanges on the two-sphere the points $\infty$ and $0$, the large scale behavior with the local scale behavior at the origin.

\begin{proof}[Proof of Theorem \ref{th:spherical:girko:gininf}]

We will first study the case when $A$ and $B$ are Ginibre matrices
so that $M = M^{\mathrm{Gin}}$.
To simplify calculations we 
argue why it is enough to deal with the
origin.
Indeed,
for any $\lambda_0 \in \mathbb C$, we may
take a rotation $R$ of
$\mathbb S^2 \simeq \mathbb C \cup \{\infty\}$ that maps $0$ to $\lambda_0$.
For instance, 
we could choose the explicit rotation
\begin{equation} \label{eq:ExplicitRotation}
    R(z)=\frac{z+\lambda_0}{-\overline{\lambda_0}z+1}
\end{equation}
and let us denote $\mathrm d R_0$
the differential of $R$ at the point $0$.
If we use the explicit matrix from 
\eqref{eq:ExplicitRotation} we can directly
calculate
the complex derivative $R'(0) = 1+|\lambda_0|^2$
so that
$\mathrm d R_0 = (1+|\lambda_0|^2) 
{\mathrm{Id}}_{2} $.
If we prefer to use a general rotation $R$, 
we can show that 
$|\det(\mathrm d R_0)| = (1+ |\lambda_0|^2)^2$
 because it preserves the uniform measure on the
sphere $(1+|z|^2)^{-2}d^2 z$, and
using that
$\mathrm d R_0 $ is a multiplication
by a complex number of squared norm
$(1 + |\lambda_0|^2)^2$ we have
$\mathrm d R_0 = 
(1 + |\lambda_0|^2) \mathrm{e}^{\mathrm{i}\theta}$
for some real $\theta$.
Finally, by Lemma
\ref{lem:ChangeOfCoordinates},
if we consider $x_0=0$,
$\varphi_1 = R$
and $\varphi_2$ as the identity map, we get
that
    \[\lim_{n \to \infty}\{\sqrt n (R(\lambda) - \lambda_0):
    \lambda \mbox{ is an eigenvalue of }M^{\mathrm{Gin}}\}
    = \mathrm d R_0
    \big(\lim_{n \to \infty}\{\sqrt n \lambda:
    \lambda \mbox{ is an eigenvalue of }M^{\mathrm{Gin}}\} \big)
    =\mathrm d R_0(\mathrm{Gin}_{\infty})\]
assuming the theorem holds
for the origin.
Since $\mathrm{Gin}_{\infty}$
is invariant under a multiplication
by $\mathrm{e}^{i\theta}$, we can forget the factor
$\mathrm{e}^{i\theta}$ so that
$\mathrm d R_0(\mathrm{Gin}_{\infty})
=(1 + |\lambda_0|^2)\mathrm{Gin}_{\infty} $.
On the other hand,
since $\{R(\lambda):
\lambda \mbox{ is an eigenvalue of }M^{\mathrm{Gin}}\}$ has the same law
as
$\{\lambda: \lambda \mbox{ is an eigenvalue of }M^{\mathrm{Gin}}\}$,
we get that
    \[\lim_{n \to \infty}\{\sqrt n (\lambda - \lambda_0):
    \lambda \mbox{ is an eigenvalue of }M^{\mathrm{Gin}}\}
    =(1+|\lambda_0|^2)\mathrm{Gin}_{\infty}.\]


We suppose now that $\lambda_0=0$.
Let $K_n$ be the kernel of the spectrum of $\MG$ seen as a determinantal point process, given by \eqref{eq:spherical:kernel}. It follows that the spectrum of $\sqrt{n}\MG$ forms a determinantal point process on $\mathbb{C}$ with kernel
\begin{equation}
    \frac{1}{n}K_n\Bigr(\frac{z}{\sqrt{n}},\frac{w}{\sqrt{n}}\Bigr)
    =\frac{\Bigr(1+\frac{z\overline{w}}{n}\Bigr)^{n-1}}{\pi\Bigr(1+\frac{|z|^2}{n}\Bigr)^{\frac{n+1}{2}}\Bigr(1+\frac{|w|^2}{n}\Bigr)^{\frac{n+1}{2}}}
    \xrightarrow[n\to\infty]{}
    \frac{\mathrm{e}^{z\overline{w}}}{\pi\mathrm{e}^{\frac{1}{2}|z|^2}\mathrm{e}^{\frac{1}{2}|w|^2}}
    =\sqrt{\gamma(z)\gamma(w)}\mathrm{e}^{z\overline{w}}
    \quad\text{where}\quad
    \gamma(z)=\frac{\mathrm{e}^{-|z|^2}}{\pi}.
\end{equation}
We recognize on the right-hand side the kernel of the infinite Ginibre $\mathrm{Gin}_\infty$ determinantal point process. Since the convergence is uniform with respect to $z$ and $w$ on compact subsets of $\mathbb{C}^2$, it follows that the determinantal point processes converge in distribution by, for instance, 
\cite[Proposition 3.10]{MR2018415}. 
In particular,
\[\lim_{n \to \infty} \mathbb{E}\Bigr[F\Bigr( \int f(\sqrt n (z-z_0))\dd\mu_{\MG}(z)\Bigr)\Bigr] =
\mathbb{E}\Bigr[F\Bigr(\int f \dd\mu_{(1+|z_0|^2)\mathrm{Gin_{\infty}}} \Bigr)\Bigr]\]
for every bounded continuous function $F: \mathbb R \to \mathbb R$ 
and any compactly supported continuous function
$f:\mathbb C \to \mathbb R$.

Let us now consider general $A$, $B$ and $M = AB^{-1}$ as in the hypothesis of
Theorem \ref{th:spherical:girko:gininf}.
The replacement principle for local universality at any point,
Theorem \ref{th:replacement}, tells us that we also have
\[\lim_{n \to \infty} \mathbb{E}\Bigr[F\Bigr(\int f(\sqrt n (z-z_0))\dd\mu_{M}(z)\Bigr)\Bigr] =
\mathbb{E}\Bigr[F\Bigr(\int f \dd\mu_{(1+|z_0|^2)\mathrm{Gin_{\infty}}}
\Bigr)\Bigr]\]
but, for now, only for $F$ and $f$ smooth compactly supported
functions. 
We can now apply Lemmas 
\ref{lem:WeakSmoothCompactly}
and
\ref{lem:ProcessesSmoothCompactly} from the last section to the
sequence of point processes
$(\mathcal X_n)_{n \geq 1}$ given by
$\mathcal X_n = \{\sqrt n(\lambda - z_0) : \lambda \in \mathrm{spec}(M)\}$
and to the point process
$\mathcal X$ being the scaled Ginibre process
$(1+|z_0|^2) \G_{\infty}$.

\end{proof}

\begin{proof}[Proof of Theorem \ref{th:spherical:girko:rho}]

Let us first show the convergence of $\sqrt n\rho_{\mathrm{min}}(M)$.
Since the map that takes a point configuration on $\mathbb C$ to 
the point configuration made of their norms is continuous and the map
taking a point configuration on $[0,\infty)$ to
the minimum is also continuous, we get the convergence of
the distance to the origin combining with Theorem \ref{th:spherical:girko:gininf}. For a proof of the latter standard
assertion, we may see \cite[Lemma A.1]{zbMATH07493826}. 
Now an observation due to Kostlan, see \cite{MR1148410} and \cite[Th.~4.7.3 and Sec.~7.2]{MR2552864}, states that 
\begin{equation}\label{eq:kostlan:gininf}
\{|\lambda|:\lambda\in\mathrm{Gin}_\infty\}
\overset{\dd}{=}
\{\xi_k:k\geq1\}
\end{equation}
where ${(\xi_k)}_{k\geq1}$ are independent random variables with $\xi_k^2\sim\mathrm{Gamma}(k,1)$ for all $k\geq1$. 
If $\xi_{\min} = \min \{\xi_k: k \geq 1\}$,
\[\sqrt n \rho_{\min}(M) \xrightarrow[n \to \infty]{\mathrm{law}} \xi_{\min} \]
and we may calculate $\mathbb P (\xi_{\min} \geq r)$ as
\[\mathbb P (\xi_{\min} \geq r)
=\mathbb{P}\Bigr(\min_{k\geq1}\xi_k\geq r\Bigr)
=\prod_{k=1}^\infty\mathbb{P}(\xi_k\geq r)
=\prod_{k=1}^\infty(1-F_k)(r^2)
\quad\text{where}\quad
F_k(x)=\mathbb{P}(\xi_k^2<x).
\]
Furthermore, the Gamma--Poisson duality gives
$\mathbb{P}(\xi_k^2>x)=\mathbb{P}(\pi_x<k)=\mathrm{e}^{-x}\sum_{j=0}^{k-1}\frac{x^j}{j!}$ where $\pi_x\sim\mathrm{Poisson}(x)$.

Finally, for the convergence of $\rho_{\max}(M)/\sqrt n$,
we notice that
\[\rho_{\max}(AB^{-1}) = \frac{1}{\rho_{\min}(BA^{-1})}\]
so that it is a consequence of the convergence of the minimum
for $BA^{-1}$.

Alternatively, we could use the Kostlan observation on the spherical ensemble before passing to the limit, avoiding the usage of $\mathrm{Gin}_\infty$, see \eqref{eq:spherical:rho}.
\end{proof}

\section{Proof of Theorem \ref{th:replacement:simple}}
\label{se:proof:th:replacement:simple}

Theorem \ref{th:replacement:simple} is an immediate consequence of the following more general and technical result, used with the choice $f_n(z)=f(\sqrt{n}(z-z_0))$ for $f\in\mathcal{C}^\infty_c(\C\to\mathbb{R})$ and $z_0 \in \C$.

    
\begin{theorem}[Comparison theorem]\label{th:replacement}\ \\
     Let $C_0,C_1>0$ and $\nu>0$ be fixed and such that $\nu<10^{-4}c_0$ where $c_0$ is as in condition \textup{\ref{it:condmom4}}.\\
     Let ${(f_n)}$ be a sequence of functions such that 
     \begin{equation}\label{eq_f}
     f_n \in\mathcal{C}_c^{2}(\C\to\mathbb{R}),\qquad
     \int|\Delta_z f_n(z)| \dd^2 z  \leq n^{\nu}, \qquad \mathrm{supp}(f_n) \subset\{z\in\mathbb{C}:|z|\leq C_0\}.
     \end{equation}
     Let $F:\mathbb{R}\to\mathbb{R}$ be a function with uniformly bounded derivatives by $C_1$ up to fifth order, namely
     \begin{equation}\label{eq_F}
     \max_{1\leq k\leq 5}\sup_{x\in \R} |F^{(k)}(x)| \leq C_1.
     \end{equation}
     Then, for all  $0<\delta<c_0/400$, we have
    \[
    \Bigr|\mathbb{E}\Bigr[F\Bigr(  \int f_n(z)\dd\mu_M(z)\Bigr)\Bigr]
    -\mathbb{E}\Bigr[F\Bigr( \int f_n(z)\dd\mu_{\MG}(z)\Bigr)\Bigr]\Bigr| =O\big( n^{-\delta}\big).
    \]
\end{theorem}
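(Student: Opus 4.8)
The goal is to compare $\mathbb E[F(\int f_n\,\dd\mu_M)]$ with its Ginibre counterpart. The natural device is the Ornstein\,--\,Uhlenbeck interpolation: for each of the two Girko matrices $A$ and $B$ separately, I would build a path $A_t = \ee^{-t/2}A + \sqrt{1-\ee^{-t}}\,\AG$ (and similarly $B_t$) interpolating between the given Girko matrix at $t=0$ and a genuine complex Ginibre matrix as $t\to\infty$, with the two paths driven by independent Ginibre matrices. Write $M_t = A_t B_t^{-1}$ and $\Phi(t) = \mathbb E[F(\int f_n\,\dd\mu_{M_t})]$; the claim follows from bounding $|\Phi'(t)|$ and integrating. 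Using Girko's Hermitization, $\int f_n\,\dd\mu_{M_t} = -\frac{1}{4\pi n}\int \Delta f_n(z)\,\log\det\!\big((M_t-z)(M_t-z)^*\big)\,\dd^2 z$, so after differentiating $F$ and $\log\det$ along the flow we are left with expectations of traces of resolvents $G_z(\ii\eta) = \big((H_z^{M_t} - \ii\eta)^{-1}\big)$ of the Hermitization $H_z$ (a $2n\times 2n$ chiral matrix), integrated against $\Delta f_n$. The Gaussian part of $\dd M_t$ contributes an Ornstein\,--\,Uhlenbeck drift/Laplacian that, by Gaussian integration by parts, cancels the leading terms; what survives, via a cumulant expansion in the entries of $A$ and $B$, is controlled by the fourth-moment matching condition \ref{it:condmom4}.

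\textbf{Key steps, in order.} First, reduce to the smallest-singular-value/local-law toolbox: for $z$ in the support of $f_n$ (at scale $\sqrt n$, so $|z|\lesssim \sqrt n$ after unscaling, i.e. $|\cdot|\lesssim C_0/\sqrt n$ in the original variable) one needs a lower bound $\rho_{\min}$-type estimate showing $\sigma_{\min}(M_t - z) \gtrsim n^{-C}$ with very high probability uniformly along the flow; condition \ref{it:condensity} (bounded density) gives this for $A$ and $B$ individually, and the bound is stable under the OU perturbation since a Ginibre-smoothed matrix only has a better-behaved density. This lets us truncate the $\eta$-integral in $\log\det = \int_0^\infty \big(\frac{1}{n}\operatorname{Im}\operatorname{Tr} G_z(\ii\eta)\big)\,\dd\eta + O(\ldots)$ at $\eta \asymp n^{-1+\epsilon}$ at the bottom and $\eta\asymp n^{C}$ at the top with negligible error. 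Second, invoke the local law for the Hermitization of $M_t - z$ — which, after the standard block manipulation, is a self-consistent equation of Wigner/Girko type — to get that $\frac1n\operatorname{Tr} G_z(\ii\eta)$ concentrates around its deterministic counterpart $m_z(\ii\eta)$ with error $\prec (n\eta)^{-1}$, together with the rigidity of singular values this entails. Third, perform the actual interpolation estimate: $\Phi'(t)$ is a sum over entries $jk$ of $A$ and of $B$ of terms of the form $\mathbb E$ of (derivative of $F$) times (derivative in the $(jk)$-entry, up to order $4$, of the resolvent functional), and by the cumulant expansion the orders $\leq 2$ cancel against the Gaussian OU terms, order $3$ is bounded using $\mathcal E_3 \leq n^{-1/2-c_0}$, and order $4$ using $\mathcal E_4 \leq n^{-c_0}$; the combinatorial gain of $n^{-k/2}$ per resolvent entry from the local law (off-diagonal entries are $\prec n^{-1/2}$, diagonal ones are $O(1)$), the factor $\int|\Delta f_n|\leq n^\nu$, and the $\eta$-window together produce a bound $|\Phi'(t)| \lesssim n^{-c_0 + C\nu + C\delta}\ee^{-ct}$, which integrates to $O(n^{-\delta})$ for $\nu, \delta$ small as in the hypothesis. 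Fourth, combine with the Ginibre endpoint: the path reaches an exact Ginibre matrix only in the limit $t\to\infty$, so either run $t$ up to $t\asymp \log n$ (the residual non-Gaussian matrix $\ee^{-t/2}A$ has operator norm $\prec n^{1/2}\ee^{-t/2} = \prec n^{C}$ after rescaling, still controllable) and absorb the tail, or note that $M_{\infty} \overset{\dd}{=} \MG$ exactly and the map $t\mapsto \Phi(t)$ extends continuously.

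\textbf{Main obstacle.} The hard part is the uniform-in-$t$, uniform-in-$z$ control of the smallest singular value of $M_t - z$ on scale $n^{-1+\epsilon}$, because $M_t = A_t B_t^{-1}$ is not a Girko matrix — its entries are heavy-tailed and dependent — so the clean bounded-density argument of \cite{Davies} does not apply directly to $M_t$. The way around it is to work not with $M_t - z$ but with the Hermitization of the \emph{linear pencil} $A_t - z B_t$, i.e. to exploit $(M_t - z) = (A_t - zB_t)B_t^{-1}$ and the fact that $A_t - zB_t$ \emph{is} (up to the $z$-dependent but still well-conditioned factor, and using $\sigma_{\min}(B_t^{-1}) = \sigma_{\max}(B_t)^{-1} \prec n^{-C}$) amenable to the single-Girko-matrix smallest-singular-value estimates, uniformly along the OU flow since adding an independent Ginibre component preserves the density bounds. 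Threading rigidity of the singular values of $A_t - zB_t$ through the $B_t^{-1}$ factor, uniformly in $z$ over a fine net and then by a Lipschitz-in-$z$ argument over the whole support of $f_n$, is the delicate technical core; everything else is the now-standard cumulant-expansion bookkeeping against the fourth-moment matching.
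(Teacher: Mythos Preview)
Your overall strategy --- OU interpolation between the Girko and Ginibre ensembles, Hermitization, truncation of the $\eta$--integral, and a cumulant expansion exploiting the fourth--moment matching --- is exactly the paper's approach. But you have missed a one--line algebraic simplification that dissolves what you call the ``main obstacle'' entirely.

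You write $\int f_n\,\dd\mu_{M_t} = -\frac{1}{4\pi n}\int \Delta f_n(z)\,\log\det\!\big((M_t-z)(M_t-z)^*\big)\,\dd^2 z$ and then worry about controlling $\sigma_{\min}(M_t-z)$ by ``threading rigidity of the singular values of $A_t-zB_t$ through the $B_t^{-1}$ factor.'' But observe that
\[
\log|\det(M_t-z)| = \log|\det(A_t-zB_t)| - \log|\det B_t|,
\]
and $\log|\det B_t|$ is \emph{independent of $z$}, so it integrates to zero against $\Delta f_n$ (since $\int \Delta f_n = 0$ for compactly supported $f_n$). Hence the entire functional is
\[
\int f_n\,\dd\mu_{M_t} = \frac{1}{4\pi}\int \Delta f_n(z)\,\log|\det H^z_t|\,\dd^2 z,
\]
where $H^z_t$ is the $2n\times 2n$ Hermitization of $X^z_t = (A_t-zB_t)/\sqrt{n}$. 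Now $X^z_t$ is itself a genuine Girko matrix with i.i.d.\ entries of variance $(1+|z|^2)/n$, so the standard local law, rigidity, and bounded--density smallest--singular--value estimates from \cite{Davies} apply \emph{directly} to $X^z_t$, uniformly in $t$ since OU preserves second moments. There is no $B_t^{-1}$ anywhere in the analysis, and your ``delicate technical core'' simply does not arise.

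One minor correction: the truncation level should be $\eta_0 = n^{-1-\epsilon}$, slightly \emph{below} the typical smallest singular value, not $n^{-1+\epsilon}$. With $\eta_0 = n^{-1+\epsilon}$ there are typically $\sim n^{\epsilon}$ singular values below $\eta_0$, each contributing $O(\log n)$ to the small--$\eta$ integral, which is not negligible. The local law is only directly available for $\eta \geq n^{-1+\xi}$, but one extends the resolvent bounds down to $\eta_0 = n^{-1-\epsilon}$ via the monotonicity of $\eta\,\Im\langle \mathbf x, G(\ii\eta)\mathbf x\rangle$; the paper does exactly this in the proof of its Lemma~\ref{lemma_diff}.
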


Theorem \ref{th:replacement} proves universality in a broad sense, not only for gap distribution or spectral radius, but also for CLT with test functions satisfying \eqref{eq_f}. One can also use it to show the universality of convergence speed, as long as the optimal speed is much smaller than $n^{-\delta}$ for a sufficiently small $\delta>0$.





\bigskip

Before we present the proof of Theorem \ref{th:replacement}, we sketch an outline of it, in three steps:
\begin{enumerate}
	\item Via the Hermitization trick introduced by \cite{zbMATH03901742}, one reduces the spectral problem of $AB^{-1}$ near the point $z_0\in \C$ to studying the singular value statistics of $A-zB$ with $z$ near $z_0$, \ie
	\begin{equation}\label{relation}
		\mathcal{L}= \int f_n(x)\dd\mu_{AB^{-1}}(x)=\frac{1}{4\pi} \int \Delta_z f_n(z) \log \det [(A-zB) (A-zB)^*]\dd^2 z,
	\end{equation}
	 see more details in \eqref{detAB}-\eqref{eq_hermitise} below. In particular, assuming $B$ is invertible, $z$ is an eigenvalue of $AB^{-1}$ is equivalent to that the smallest singular value of $A-zB$ is zero. This translates the eigenvalue problem of the non-Hermitian matrix into that of the Hermitian model $(A-zB) (A-zB)^*$ near zero. 
	
	\item To regularize the quantity on the right side of \eqref{relation}, we introduce a parameter $\eta_0$ slightly below the typical size of the smallest singular value of $A-zB$ near zero and show that (see Lemma \ref{lemma_tiny})
	\begin{align}\label{approx}
		\mathcal{L} \approx \mathcal{L}^{\eta_0}=&\frac{1}{4\pi} \int \Delta_z f_n(z) \log \det \big[(A-zB) (A-zB)^*+\eta_0^2\big]\dd^2 z\nonumber\\
		\approx&-\frac{1}{4\pi} \int \Delta_z f_n(z) \int_{\eta_0}^{T} \Im \mathrm{Tr} G^{z}(\ii \eta) \dd \eta \dd^2 z, \qquad \qquad \eta_0=n^{-1-\epsilon}, \quad T=n^{100},
	\end{align}
     for a small $\epsilon>0$, where the approximation holds in the sense of absolute mean, and $G^{z}$ is the resolvent (or Green function) of the Hermitised matrix of $A-zB$ as defined in \eqref{eq_hermitise}-\eqref{eq_resolvent} below. This step relies on the effective lower bound of the smallest singular value of Girko matrices from 
     \cite{Davies} and the local law estimates for the Green function from \cite{zbMATH06347297}. It is worth noting that the polynomial lower bound for the smallest singular value obtained in \cite{MR2908617} and inspired from \cite{MR2684367} is not enough for our purposes regarding very tiny singular values, see \eqref{eq_tiny}.
   

	\item  From \eqref{approx},  it then suffices to show that under the fourth moment condition \ref{it:condmom4} (see Lemma \ref{lemma_GFT}), 
	$$\mathbb{E}[F(\mathcal{L}^{\eta_0})]=\mathbb{E}^{\mathrm{Gin}}[F(\mathcal{L}^{\eta_0})]+o(1),$$ 
	where $\mathbb{E}^{\mathrm{Gin}}$ indicates the corresponding expectation with $A$ and $B$ being Ginibre matrices. This step relies on the so-called long-time Green function comparison theorem (GFT). To achieve this, we let the matrix entries of $A$ and $B$ run independent Ornstein--Uhlenbeck dynamics. Then the corresponding matrix flow $A_t-zB_t$ interpolates between $A-zB$ and the Ginibre counterpart $A^{\mathrm{Gin}}-zB^{\mathrm{Gin}}$. We use Itô's formula and cumulant expansion formula to compute the time derivative of $\mathbb{E}[F(\mathcal{L}_t^{\eta_0})]$ and then use the local law estimates for the Green function 
    and the fourth moment condition \ref{it:condmom4} to show the smallness of $\dd\mathbb{E}[F(\mathcal{L}_t^{\eta_0})]$ along the interpolating flow. 
	
\end{enumerate}

\subsection{Proof of Theorem \ref{th:replacement}}
In this section, we normalize both Girko matrices $A$ and $B$ by $1/\sqrt{n}$, \ie $\A=A/\sqrt{n}$ and $\B=B/\sqrt{n}$, and their quotients are the same, $M=AB^{-1}=\A \B^{-1}$. We also set $\MG={\tilde A}^{\mathrm{Gin}}({\tilde B}^{\mathrm{Gin}})^{-1}=\AG (\BG)^{-1}$ with ${\tilde A}^{\mathrm{Gin}}=\AG/\sqrt{n}$, ${\tilde B}^{\mathrm{Gin}}=\BG/\sqrt{n}$ being normalized Ginibre matrices. For conventional consistency, we always write $\mathrm{Gin}$ as a superscript to denote the special Gaussian case. 

Using that $\Delta_z \log |z|=2\pi\delta_0$ and applying integration by parts twice, for any $f\in \mathcal{C}^{2}_c(\C \rightarrow \R)$, we have
\begin{align}\label{detAB}
     \int f_n(x)\dd\mu_M(x)=&\frac{1}{2\pi}\int \Delta_z f_n(z) \log |\det (\A \B^{-1}-z)|\dd^2 z\nonumber\\
    =&\frac{1}{2\pi}\int \Delta_z f_n(z) \big(\log |\det (\A-z\B)|-\log |\det (\B)|\big)\dd^2 z,
\end{align}
where we also used that $\det(\A \B^{-1}-z)=\det(\A-z\B)/\det(\B)$. Since $\log |\det \B|$ is $z$-independent, again using integration by parts in $z$, the last term in \eqref{detAB} vanishes. Hence we have
\begin{align}\label{X}
	 \int f_n(x)\dd\mu_M(x)=\frac{1}{2\pi} \int \Delta_z f_n(z) \log |\det (X^z) |\dd^2 z, \qquad  X^z=\A-z \B \in \C^{n \times n}.
\end{align}
By the Hermitization trick of \cite{zbMATH03901742}, we write
\begin{align}\label{eq_hermitise}
     \int f_n(x)\dd\mu_M(x)=\frac{1}{4\pi} \int \Delta_z f_n(z) \log |\det H^{z}|\dd^2 z, \qquad  H^{z}=\begin{pmatrix}
		0  &   X^z \\
		(X^z)^*    & 0
	\end{pmatrix}  \in \C^{2n \times 2n}.
\end{align}
  The eigenvalues of the Hermitised matrix $H^{z}$ are then denoted by $\big(\lambda^{z}_{\pm i}\big)_{i=1}^n$ with $\lambda^{z}_{-i}=-\lambda^{z}_i$ and $0\leq \lambda^{z}_i \leq \lambda^{z}_{i+1}$ for $1\leq i\leq n$. Note that the non-negative eigenvalues $\big(\lambda^{z}_{ i}\big)_{i=1}^n$ are also the singular values of $X^z=(A-zB)/\sqrt{n}$.  We further define the resolvent (or Green function) of $H^{z}~(z\in \mathbb{C})$ by
\begin{align}\label{eq_resolvent}
G^{z}(w)=(H^{z}-w)^{-1}, \qquad \quad w \in \mathbb{C} \setminus \mathbb{R}.
\end{align}
Using that $\partial_\eta \log(\lambda^2+\eta^2)=\frac{2\eta}{\lambda^2+\eta^2}$, we obtain from \eqref{eq_hermitise} that
\begin{align}
     \int f_n(x)\dd\mu_M(x)=-\frac{1}{4\pi} \int \Delta_z f_n(z) \int_{0}^{T} \Im \mathrm{Tr} G^{z}(\ii \eta) \dd \eta \dd^2 z+\frac{1}{4\pi}  \int \Delta_z f_n(z) \log |\det (H^{z}-\ii T)|\dd^2 z,
\end{align}
where we chose $T=n^{100}$. Note that 
	\begin{align}
		\log |\det (H^{z}-\ii T)|-2n \log T= \sum_{j} \log \left( 1+\Big(\frac{\lambda_j^{z}}{T}\Big)^2\right) \leq \frac{\mathrm{Tr} (H^{z})^2}{T^2} \prec n^{-199},
	\end{align}
where we used that $\log(1+|x|)\leq |x|$ and, for
the stochastic domination, we used 
\ref{it:condmom} to get that
\begin{equation}
\mathbb P(\mathrm{Tr}(X^{z})(X^{z})^* \geq n^\xi n)
= \mathbb P\Bigr(\sum_{i,j=1}^n |a_{ij} - z b_{ij}|^2
\geq n^\xi n^2\Bigr) \leq  \sum_{i,j=1}^n 
\mathbb P (|a_{ij} - z b_{ij}|^2 \geq n^\xi)
\leq 2^{k-1} n^2 n^{-k \xi} D_k
\end{equation}
which implies $\mathrm{Tr}(X^{z})(X^{z})^*\prec n$.
Recall that $f_n \in\mathcal{C}^2_c(\C)$ 
satisfies $\int |\Delta_z f_n(z)| \dd^2 z \leq n^{\nu}$ with $\nu<10^{-4}c_0$. Hence we have 
\begin{align}
     \int f_n(x)\dd\mu_M(x)=-\frac{1}{4\pi} \int \Delta_z f_n(z) \int_{0}^{T} \Im \mathrm{Tr} G^{z}(\ii \eta) \dd \eta \dd^2 z+O_\prec(n^{-100}). 
\end{align}
We next split the above $\eta$-integral into two parts, \ie 
\begin{align}\label{split_I}
    I^{\leq\eta_0}+\II=-\frac{1}{4\pi} \int \Delta_z f_n(z) \Big(  \int_{0}^{\eta_0}+\int_{\eta_0}^{T}\Big) \Im \mathrm{Tr} G^{z}(\ii \eta) \dd \eta \dd^2 z, \quad\qquad \eta_0=n^{-1-\epsilon},
\end{align}
 for some small $\epsilon>0$ to be fixed later. The lemma below shows that the first $\eta$-integral $I^{\leq\eta_0}$ is negligible in the sense of absolute mean.

\begin{lemma}\label{lemma_tiny}
Fix any small $\epsilon>0$ and set $\eta_0=n^{-1-\epsilon}$. Then the following holds
$$\mathbb{E}\big|I^{\leq\eta_0}\big|=O\big( n^{-\epsilon/2+\nu}\big).$$
\end{lemma}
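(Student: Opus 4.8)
The quantity to control is $I^{\leq\eta_0} = -\frac{1}{4\pi}\int \Delta_z f_n(z)\int_0^{\eta_0}\Im\mathrm{Tr}\,G^z(\ii\eta)\,\dd\eta\,\dd^2 z$. Writing $\Im\mathrm{Tr}\,G^z(\ii\eta) = \sum_{i=1}^n \frac{2\eta}{(\lambda_i^z)^2+\eta^2}$ in terms of the singular values $\lambda_1^z\leq\cdots\leq\lambda_n^z$ of $X^z$, the inner $\eta$-integral equals $\sum_i \log\bigl(1+\eta_0^2/(\lambda_i^z)^2\bigr)$, which is bounded by $\sum_i \min\{1, \eta_0^2/(\lambda_i^z)^2\}$ up to a logarithmic factor (or more simply by $\sum_i \log(1+\eta_0^2/(\lambda_i^z)^2)$ directly). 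The plan is to split into the smallest singular value and the rest: $\log\bigl(1+\eta_0^2/(\lambda_1^z)^2\bigr)$ is dangerous and handled via the effective lower bound on $\lambda_1^z$, while $\sum_{i\geq 2}\log\bigl(1+\eta_0^2/(\lambda_i^z)^2\bigr)$ is controlled via rigidity/local law bounds showing $\lambda_i^z \gtrsim i/n$ with very high probability, so that $\sum_{i\geq 2}\eta_0^2 n^2/i^2 \lesssim \eta_0^2 n^2 = n^{-2\epsilon}$ — already much smaller than required.

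For the smallest singular value, I would invoke the bound of \cite{Davies}, valid under condition \ref{it:condensity}: for $X^z = (A-zB)/\sqrt n$ with $z$ in the compact set $\{|z|\leq C_0\}$, one has $\mathbb{P}(\lambda_1^z \leq t/\sqrt n) \lesssim t^2$ up to constants depending on $D_0, C_0$ (this is the quadratic-in-$t$ small-ball estimate for the least singular value of a shifted i.i.d.\ matrix with bounded density; here $A - zB$ still has a bounded density since $A$ does and $B$ is independent). Consequently $\mathbb{E}\bigl[\log(1+\eta_0^2/(\lambda_1^z)^2)\bigr] = \int_0^\infty \mathbb{P}\bigl(\log(1+\eta_0^2/(\lambda_1^z)^2) > s\bigr)\,\dd s$; substituting $s = \log(1+\eta_0^2 n/t^2)$ and using the tail bound gives, after a routine computation, a contribution of order $\eta_0\sqrt n \log(1/\eta_0) = n^{-\epsilon/2}\cdot\mathrm{polylog}(n)$. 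This is the dominant term and produces the claimed rate $n^{-\epsilon/2+\nu}$ once one multiplies by $\int|\Delta_z f_n|\,\dd^2 z \leq n^\nu$ and integrates the uniform-in-$z$ bound over the compact support. Note one must take the supremum over $z$ in the bounded support before integrating, which is fine since both the Davies bound and the local law/rigidity estimates are uniform over $|z|\leq C_0$ (the relevant quantity $1+|z|^2$ that rescales the singular value distribution stays bounded there).

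The main obstacle I expect is the very-tiny-singular-value regime: the polynomial lower bound $\lambda_1^z \gtrsim n^{-A}$ with very high probability — of the type in \cite{MR2908617} — is \emph{not} enough here, because $\log(1+\eta_0^2/(\lambda_1^z)^2)$ could then be as large as $\log(n^{2A-2\epsilon}) \sim \log n$ on the bad event, and a very-high-probability bound on that event only kills a power of $n$, leaving a $\log n$ contribution that does not by itself beat an arbitrary fixed $n^{-\epsilon/2}$ rate — actually it does, but the subtlety is that we need the \emph{expectation} of the log to be genuinely of size $\eta_0\sqrt n$, which requires the full quadratic small-ball profile $\mathbb{P}(\lambda_1^z \leq t/\sqrt n)\lesssim t^2$ rather than just a crude lower tail. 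Hence the care in citing the sharp form of the Davies-type estimate and in carrying out the layer-cake integral is the crux; everything else (the rigidity sum over $i\geq 2$, the integration in $z$ against $\Delta_z f_n$, pulling out the $n^\nu$ factor) is book-keeping.
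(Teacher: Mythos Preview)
Your overall strategy is right and in the same spirit as the paper's: rewrite the inner integral as $\sum_i \log(1+\eta_0^2/(\lambda_i^z)^2)$, control the smallest singular value via the quadratic Davies-type small-ball bound, control the rest via local-law/rigidity information, then multiply by $\int|\Delta_z f_n|\leq n^\nu$.

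The gap is in your treatment of ``the rest''. Rigidity in the form available here---the counting bound $|N(0,E) - 2n\int_0^E\rho^z| \prec 1$---does \emph{not} yield $\lambda_i^z \gtrsim i/n$ for small indices. It only tells you that at most $n^{\xi'}$ eigenvalues lie below any threshold $E \leq n^{-1}$; nothing prevents $\lambda_2^z,\dots,\lambda_{n^{\xi'}}^z$ from all being comparable to $\lambda_1^z$ and hence arbitrarily small. So your bound $\sum_{i\geq 2}\eta_0^2 n^2/i^2$ is unjustified for $2\leq i \lesssim n^{\xi'}$, and those are exactly the dangerous terms. The paper sidesteps this by splitting by eigenvalue \emph{size} rather than \emph{index}: three zones $\{\lambda_j^z \geq \eta_2\}$, $\{\eta_L \leq \lambda_j^z \leq \eta_2\}$, $\{\lambda_j^z \leq \eta_L\}$ with $\eta_2=n^{-1-\epsilon/2}$ and $\eta_L=n^{-L}$. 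The middle zone is precisely where your argument breaks; the paper bounds its contribution by (cardinality $\leq n^{\xi'}$, from rigidity) $\times$ (each $\log$-term $\lesssim \log n$, since $\lambda_j^z\geq \eta_L$) $\times$ $\mathbb{P}(\lambda_1^z\leq \eta_2)\lesssim n^2\eta_2^2=n^{-\epsilon}$, giving $O(n^{-\epsilon/2})$. You can patch your argument the same way, but this step is not the ``book-keeping'' you suggest.

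Two smaller points. First, your arithmetic $\eta_0\sqrt n = n^{-\epsilon/2}$ is wrong (it equals $n^{-1/2-\epsilon}$); in fact the layer-cake with the quadratic tail gives $\mathbb{E}[\log(1+\eta_0^2/(\lambda_1^z)^2)] \lesssim n^2\eta_0^2\log n = n^{-2\epsilon}\log n$, better than needed, so this slip is harmless. Second, the Davies bound you quote, $\mathbb{P}(\lambda_1^z\leq t/\sqrt n)\lesssim t^2$, is a factor $n$ stronger than the form the paper actually uses, $\mathbb{P}(\lambda_1^z\leq t)\leq Cn^2t^2$; double-check the cited lemma, though either version suffices.
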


Recall that $c_0>0$ is a small constant in \ref{it:condmom4} and $\nu<10^{-4}c_0$ from \eqref{eq_f}. Hence we can choose a small $\epsilon>0$ such that $100\nu<\epsilon<c_0/100$, where the upper restriction of $\epsilon$ will be used in Lemma \ref{lemma_GFT} below. Note that $F$ has a bounded derivative as in \eqref{eq_F}. Then by a simple Taylor expansion and using Lemma \ref{lemma_tiny}  with $\nu<\epsilon/100$, we have
\begin{align}\label{eq_quantity}
	\mathbb{E} \Big[ F \Big(  \int f_n(x)\dd\mu_M(x)\Big) \Big]=\mathbb{E} \big[ F (\II) \big]+O\big( n^{-\epsilon/4}\big).
\end{align}
Since the Ginibre matrix is also a Girko matrix, the above also applies to $\MG=\AG(\BG)^{-1}$ with $\A$ and $\B$ replaced by $\AG$ and $\BG$. 
It then suffices to compare the expectation $\mathbb{E}\big[ F (\II) \big]$ with its Ginibre counterpart $\mathbb{E}^{\G}\big[ F (\II) \big]$.
\begin{lemma}\label{lemma_GFT}
Recall that $\nu<10^{-4}c_0$ from \eqref{eq_f}.	Fix any $0<\epsilon<c_0/100$ and let $\eta_0=n^{-1-\epsilon}$.  Then 
	\begin{align}
    \label{eq_GFT}
		\Big|\mathbb{E} \big[ F (\II) \big] -\mathbb{E}^{\mathrm{Gin}}  \big[ F (\II) \big] \Big|=O(n^{-c_0/4}),
	\end{align}
where we used $\mathbb{E}^{\mathrm{Gin}} $ to denote the corresponding expectation with $\A$ and $\B$ being Ginibre matrices. 
\end{lemma}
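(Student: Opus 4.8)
The plan is to interpolate between the Girko ratio $M=\A\B^{-1}$ and the Ginibre ratio $\MG$ by running Ornstein--Uhlenbeck dynamics on every matrix entry, and to control how far $\mathbb{E}[F(\II)]$ can drift along the flow. Pick normalized complex Ginibre matrices $\tilde A',\tilde B'$ independent of everything, and set entrywise
\[
\A_t=\ee^{-t/2}\A+\sqrt{1-\ee^{-t}}\,\tilde A',\qquad \B_t=\ee^{-t/2}\B+\sqrt{1-\ee^{-t}}\,\tilde B'\qquad(t\ge 0),
\]
so that $(\A_0,\B_0)=(\A,\B)$, $(\A_\infty,\B_\infty)\overset{\dd}{=}(\AG/\sqrt n,\BG/\sqrt n)$, and the entries of $\A_t,\B_t$ remain independent, centered, of variance $1/n$, with all moments bounded uniformly in $t$ by \ref{it:condmom}. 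Write $X^z_t=\A_t-z\B_t$, let $H^z_t$ be its Hermitization, $G^z_t(w)=(H^z_t-w)^{-1}$, and let $\II_t$ be the quantity of \eqref{split_I} built from $G^z_t$, with the cutoff $\eta_0=n^{-1+\epsilon}$ of this lemma (the passage between this cutoff and the one in \eqref{split_I} is a separate estimate in the spirit of Lemma \ref{lemma_tiny}, relying on the effective lower bound for the smallest singular value of \cite{Davies}). Since Ornstein--Uhlenbeck leaves the Ginibre law invariant, \eqref{eq_GFT} will follow from the differential bound
\[
\Bigl|\frac{\dd}{\dd t}\mathbb{E}\bigl[F(\II_t)\bigr]\Bigr|\lesssim \ee^{-3t/2}\,n^{-c_0/2}\qquad(t\ge 0),
\]
integrated over $t\in[0,\infty)$: the exponential factor makes the integral finite, giving $\bigl|\mathbb{E}[F(\II_0)]-\mathbb{E}[F(\II_\infty)]\bigr|=O(n^{-c_0/2})=O(n^{-c_0/4})$.

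To reach the differential bound I would differentiate under the expectation (equivalently, apply Itô and drop the martingale part), obtaining a sum over the $2n^2$ entries of $\A_t$ and $\B_t$. In each summand the coefficient $-\tfrac12\ee^{-t/2}$ multiplies the original entry, while a Gaussian coefficient multiplies the corresponding entry of $\tilde A'$ or $\tilde B'$; the Gaussian part is removed by Gaussian integration by parts, turning it into a second-order entry-derivative of $F(\II_t)$, and the non-Gaussian part is expanded by the cumulant expansion formula in the original entry. The decisive point is that the first-order (second cumulant) contributions of the non-Gaussian and Gaussian parts cancel \emph{exactly}, because the second moments match ($\mathcal{E}_1=\mathcal{E}_2=0$) --- this is why the Ornstein--Uhlenbeck flow, rather than the entry-by-entry Lindeberg swap of \cite{MR3306005}, is the natural interpolation here. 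After truncating the cumulant expansion at the fourth cumulant, what survives is
\[
\frac{\dd}{\dd t}\mathbb{E}\bigl[F(\II_t)\bigr]=\sum_{i,j}\ \sum_{m\in\{3,4\}}c_m\,\ee^{-mt/2}\,\kappa_m\,\mathbb{E}\bigl[D^mF(\II_t)\bigr]+(\text{remainder}),
\]
where the $(i,j)$-sum runs over both matrices (the $\B_t$-terms carrying an extra bounded power of $z$ or $\overline z$), $\kappa_m$ is the $m$-th cumulant of a normalized entry, so by \ref{it:condmom4} one has $|\kappa_3|\lesssim\mathcal{E}_3\,n^{-3/2}\le n^{-2-c_0}$ and $|\kappa_4|\lesssim\mathcal{E}_4\,n^{-2}\le n^{-2-c_0}$, $D^mF(\II_t)$ denotes an $m$-th order mixed holomorphic/antiholomorphic derivative of $F(\II_t)$ in a single entry, and the remainder (Taylor's theorem) involves $F$ only through its derivatives up to order five, bounded by $C_1$ via \eqref{eq_F}.

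The heart of the matter is the estimate $\sum_{i,j}\mathbb{E}\bigl[|D^mF(\II_t)|\bigr]\lesssim n^{2+o(1)}$ for $1\le m\le 5$, which multiplied by $|\kappa_m|\le n^{-2-c_0}$ yields terms of size $n^{-c_0+o(1)}\le n^{-c_0/2}$. For this I would first rewrite $\II_t$ in log-determinant form: since $\int_{\eta_0}^{T}\Im\operatorname{Tr}G^z_t(\ii\eta)\,\dd\eta=2n\log T-\log\det\bigl(X^z_t(X^z_t)^*+\eta_0^2\bigr)+O_\prec(n^{-D})$ and the $z$-independent term $2n\log T$ is annihilated by $\int\Delta_z f_n\,\dd^2z$, we get
\[
\II_t=\frac{1}{4\pi}\int\Delta_z f_n(z)\,\log\det\bigl(X^z_t(X^z_t)^*+\eta_0^2\bigr)\,\dd^2z+O_\prec(n^{-D}).
\]
Now $\partial_{\A_{ij}}\log\det\bigl(X^z_t(X^z_t)^*+\eta_0^2\bigr)$ equals an off-diagonal (cross-block) entry $(G^z_t(\ii\eta_0))_{n+j,i}$ of the Hermitization resolvent at the \emph{fixed} scale $\eta_0=n^{-1+\epsilon}$, and each further entry-derivative inserts one more rank-one matrix, so every such derivative of order $\le5$ is a bounded-size polynomial in the entries of $G^z_t(\ii\eta_0)$. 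By the Wigner-type local law of \cite{zbMATH06347297}, uniform over $|z|\le C_0$ --- where the self-consistent density of $H^z_t$ at the origin is bounded away from $0$ and $\infty$ because $X^z_t$ is a genuine iid matrix of variance $(1+|z|^2)/n$ for every $z$, so $\Im m^z(\ii\eta_0)\asymp1$ --- the diagonal entries of $G^z_t(\ii\eta_0)$ are $O_\prec(1)$ and the off-diagonal ones are $O_\prec(\Psi^z(\eta_0))$ with $\Psi^z(\eta_0)\prec n^{-\epsilon/2}$; hence every entry-derivative of $\log\det\bigl(X^z_t(X^z_t)^*+\eta_0^2\bigr)$ of order $\le5$ is $O_\prec(1)$ (and even $O_\prec(n^{-\epsilon/2})$ whenever it forces an off-diagonal factor). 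Consequently $\partial^{\,r}\II_t=O_\prec(n^\nu)$ per entry for $r\le5$, so $|D^mF(\II_t)|\prec n^{O(\nu)}$ and $\sum_{i,j}\mathbb{E}[|D^mF(\II_t)|]=O(n^{2+o(1)})$. The cumulant-expansion remainder is controlled by the usual entry truncation $\A_{ij}=\A_{ij}\mathbf 1_{\{|\A_{ij}|\le n^{-1/2+\delta'}\}}+(\text{rest})$: the large part has probability $\le n^{-D}$ for all $D$ by \ref{it:condmom}, where the crude bound $\|(X^z_t(X^z_t)^*+\eta_0^2)^{-1}\|\le\eta_0^{-2}$ is enough; on the small part a single-entry perturbation of size $n^{-1/2+\delta'}$ does not spoil the local law, so the previous estimates survive up to an extra $n^{-5/2+O(\delta')}$, and the summed remainder is $o(n^{-c_0/2})$. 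Everything is uniform in $t$ because $\A_t,\B_t$ are convex combinations of admissible matrices --- in particular $X^z_t$ has a smooth density for $t>0$, while \ref{it:condensity} handles $t=0$ --- and the effective smallest-singular-value bound of \cite{Davies} legitimizes the deterministic fallback. Integrating the pointwise bound $\ee^{-3t/2}n^{-c_0/2}$ over $t\in[0,\infty)$ yields \eqref{eq_GFT}.

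The main obstacle is precisely this power counting: one must check that summing over all $\Theta(n^2)$ matrix entries while working at the near-critical scale $\eta_0=n^{-1+\epsilon}$ costs only $n^{2+o(1)}$, not the naive $n^2\eta_0^{-1}$. It works because every entry-derivative of $\log\det\bigl(X^z_t(X^z_t)^*+\eta_0^2\bigr)$ that is not self-paired exposes an off-diagonal block entry of the resolvent --- governed by the small local-law error $\Psi^z\ll1$ --- while the self-paired ones, though only $O(1)$ per entry, are still crushed by the cumulant gains $|\kappa_m|\le n^{-2-c_0}$; and, crucially, the Hermitization's self-consistent density stays bounded away from $0$ and $\infty$ on the entire support of $f_n$, a feature special to the ratio model, where $X^z$ is a genuine iid matrix for every $z$ and there is no spectral gap, so the local law holds with no hard-edge degeneracy to track. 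The secondary difficulty is the cumulant-expansion remainder, where the local law is unavailable and one must fall back on the quantitative invertibility of \cite{Davies}.
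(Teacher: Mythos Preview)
Your approach is essentially the same as the paper's: Ornstein--Uhlenbeck interpolation, It\^o's formula, cumulant expansion with the key second-order cancellation from the matching of variances, and local-law bounds on the entry-derivatives of $\II_t$. Two technical choices differ and are worth flagging. First, you exploit the decay $\kappa^{(p,q)}((\A_t)_{ij})=\ee^{-(p+q)t/2}\kappa^{(p,q)}(\A_{ij})$ for $p+q\ge3$ to integrate $\dd\mathbb{E}[F(\II_t)]/\dd t$ directly over $t\in[0,\infty)$; the paper instead bounds the derivative uniformly on $[0,100\log n]$ (using only \eqref{eq_cumu_bound}, i.e.\ that the cumulants stay bounded) and then bridges from $t_0=100\log n$ to the Ginibre endpoint via the resolvent identity \eqref{eq_final}. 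Your route is slightly slicker here. Second, you take the statement's $\eta_0=n^{-1+\epsilon}$ at face value, so the local law for $G^z_t(\ii\eta_0)$ holds directly with $O_\prec(1)$ entry bounds; the paper's proof actually runs with $\eta_0=n^{-1-\epsilon}$ (consistent with \eqref{split_I} and \eqref{def_L}; the exponent in the lemma statement is a typo) and must therefore \emph{extend} the entrywise local law below the critical scale via the monotonicity of $\eta\mapsto\eta\langle\mathbf x,\Im G^z(\ii\eta)\mathbf x\rangle$, obtaining only $|(G^z_t(\ii\eta_0))_{xy}|\prec n^{2\epsilon}$ (their Lemma~\ref{lemma_diff} and \eqref{eq_G_bound}). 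Finally, the paper truncates the cumulant expansion at a large fixed order $K_0$ rather than at order~$4$, handling the remainder by crude deterministic bounds instead of your entry-truncation argument; both are standard.
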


 Therefore, Theorem \ref{th:replacement} follows directly from \eqref{eq_quantity} for both matrices $M$ and $\MG$ in combination with Lemma~\ref{lemma_GFT} for $\epsilon<c_0/100$, where $c_0>0$ is small constant from \ref{it:condmom4}.
 
 \medskip
 
 The remaining part of this section is to prove Lemma \ref{lemma_tiny} and Lemma \ref{lemma_GFT}.

\begin{proof}[Proof of Lemma \ref{lemma_tiny}]
	Recall that $H^{z}$ in \eqref{eq_hermitise} is the Hermitised matrix of $X^z=\wt{A}-z\wt{B}$, and $G^{z}$ is its resolvent defined in \eqref{eq_resolvent}. For each fixed $z\in \C$,  $X^z=(x_{ij})$ is still a Girko matrix with $\mathbb{E}[x_{ij}]=0$, $\mathbb{E}[(x_{ij})^2]=0$ and $\mathbb{E}[|x_{ij}|^2]=(1+|z|^2)/n$. In particular if we set $z=0$, then $X^{z=0}$ becomes a standard Girko matrix with variance $n^{-1}$.  From \cite[Theorem 1.1]{zbMATH06347297} (see also \cite{zbMATH06221300,zbMATH06261246,zbMATH06330939}), fixing any small $\xi>0$, the following local laws for the resolvent $G^{z}$ with $z=0$ 
	\begin{align}\label{eq:local_law_G}
		\Big| \big\langle \mathbf{x}, \big(G^{z=0}(w)-m_{sc}(w) \big)\mathbf{y} \big\rangle \Big|\prec \sqrt{
			\frac{1}{n \eta}}, \qquad \Big|\frac{1}{2n}  \mathrm{Tr}G^{z=0}(w)-m_{sc}(w)\Big|  \prec \frac{1}{n \eta},
	\end{align}
	hold uniformly for any $|w|\leq 10$ and $\eta=|\Im w| \geq n^{-1+\xi}$ and for any deterministic unit vectors $\mathbf{x}, \mathbf{y}\in \mathbb{C}^{2n}$, where $m_{sc}$ is the Cauchy--Stieltjes transform of the semicircle distribution $\rho_{sc}(x)=\frac{1}{2 \pi} \sqrt{(4-x^2)_+}$.  Note that for each fixed $z\in \C$, the independence of $A$ and $B$
    implies that $H^z$ 
    has the same distribution as $\sqrt{1+|z|^2} \tilde{H}$, where $\tilde{H}$ is the Hermitised matrix obtained from a standard Girko matrix with variance $1/n$ (note that the density of $\tilde H$ could depend on $z$). Thus, the resolvent of $\tilde{H}$ satisfies the same local law as in \eqref{eq:local_law_G}. A simple rescaling then yields, for each fixed $z\in \C$, the following local laws for $G^z$, 
	\begin{align}\label{eq:local_law_G_z}
		\Big| \big\langle \mathbf{x}, \big(G^z(w)-m^z(w)\big) \mathbf{y} \big\rangle \Big|  \prec \sqrt{
			\frac{1}{n \eta}}, \qquad \Big|\frac{1}{2n}  \mathrm{Tr}G^{z}(w)-m^z(w)\Big|  \prec \frac{1}{n\eta},
	\end{align}
	hold uniformly for any $|w|\leq 10$ and $\eta=|\Im w| \geq n^{-1+\xi}$, with the deterministic function $m^{z}$ given by
	\begin{align}\label{rho}
		m^{z}(w)=\frac{1}{\sqrt{1+|z|^2}} m_{sc}\Big( \frac{w}{\sqrt{1+|z|^2}}\Big), \qquad w\in \C \setminus \R.
	\end{align}
It is easy to check that $|m^{z}(w)| \leq 1$ uniformly for any $w \in \C \setminus \R$ and $z\in \C$. Note that $G^{z}$ is Lipschitz continuous in $z$. Though the estimates in \eqref{eq:local_law_G_z} hold with very high probability for each fixed $z$, they can be extended to very high probability estimates simultaneously for all $|z|\leq C_0$ by a classic grid argument and union bound (see, e.g., below Eq. (7.55) of \cite{zbMATH06780221}).  More precisely, divide the domain $\Lambda:=\{z\in \C: |z|\leq C_0\}$ into equal grids of size $n^{-100}$ and set $\Lambda_n:=n^{-100}\mathbb{Z}^2 \cap \Lambda$. From  the definition of $\prec$ in \eqref{prec}, fixing any small $\xi'>0$ and large $D>100$, we have 
    \begin{align}\label{union}
    &\mathbb{P}\left( \exists z_j \in  \Lambda_n  : \sup_{\|\bf{x}\|=\|\bf{y}\|=1} \sup_{|w|\leq 10, \eta \geq n^{-1+\xi}} \Big| \big\langle \mathbf{x}, \big(G^{z_j}(w)-m^{z_j}(w)\big) \mathbf{y} \big\rangle \Big|  \geq \frac{n^{\xi'}}{\sqrt{n\eta}}  \right) \\
    & \qquad \qquad \leq \sum_{z_j \in \Lambda_n}\mathbb{P}\Big(  \sup_{\|\bf{x}\|=\|\bf{y}\|=1} \sup_{|w|\leq 10, \eta \geq n^{-1+\xi}} \Big| \big\langle \mathbf{x}, \big(G^{z_j}(w)-m^{z_j}(w)\big) \mathbf{y} \big\rangle \Big|  \geq \frac{n^{\xi'}}{\sqrt{n\eta}} \Big) \leq n^{-D+200}.
    \end{align}
    By choosing $D$ sufficiently large, this proves the first estimate in \eqref{eq:local_law_G_z} for all  the lattice points $z_j \in \Lambda_n$ simultaneously. Note that from the resolvent identity and that $\|G^z(w)\| \leq \eta^{-1}$, we have, for any $|z-z'| \leq n^{-100}$,
    \begin{align}
    \Big| \big\langle \mathbf{x}, \big(G^{z}(w)-G^{z'}(w)\big) \mathbf{y} \big\rangle \Big|  \leq \|G^{z}(w)-G^{z'}(w)\| \leq  |z-z'| \|G^{z}(w)\| \|G^{z'}(w)\| \prec n^{-10},
    \end{align}
    holds uniformly for any $|w|\leq 10$ and $\eta=|\Im w| \geq n^{-1+\xi}$ and deterministic unit vectors $\bf{x}, \bf{y}$.  Combining with the deterministic bounds for $|m^{z}-m^{z'}|$ from \eqref{rho}, we have obtained the first estimate in \eqref{eq:local_law_G_z} simultaneously for all $|z|\leq C_0$. The same also applies to the second estimate in \eqref{eq:local_law_G_z}. From now on, without specific mention, all the estimates below hold simultaneously for all $|z|\leq C_0$.  

	Now we are ready to prove Lemma \ref{lemma_tiny}. The argument is similar to the proof of \cite[Lemma 4]{zbMATH07329432}. By a simple spectral decomposition of $H^{z}$, we have
	\begin{align}\label{int_small}
		I^{\leq\eta_0} = -\frac{1}{2\pi} \int \Delta_z f_n(z) \Big[\int_{0}^{\eta_0} \sum_{j=1}^{n} \frac{\eta}{(\lambda_j^z)^2+\eta^2} \dd \eta \Big] \dd^2 z, \qquad \eta_0=n^{-1-\epsilon},
	\end{align}
where $\big(\lambda^{z}_{j}\big)_{j=1}^n$ are the ordered non-negative eigenvalues of $H^{z}$, that are also singular values of $X^z$.
	Let $L>0$ be a large number to be fixed later.  Set $\eta_L=n^{-L}$ and $\eta_2=n^{-1-\epsilon/2}$. Taking the expectation, we split the above sum of eigenvalues into three parts as below:
	\begin{align}\label{eq_split}
		\mathbb{E} \Big[\int_{0}^{\eta_0} \sum_{j=1}^{n} \frac{\eta}{(\lambda^z_j)^2+\eta^2} \dd \eta \Big]=&\mathbb{E} \Big[\int_{0}^{\eta_0} \sum_{\lambda^z_j \geq \eta_2} \frac{\eta}{(\lambda_j^z)^2+\eta^2} \dd \eta \Big]\nonumber\\
		&+\mathbb{E}\Big[ \Big(\sum_{\eta_L\leq \lambda^z_j \leq \eta_2} +\sum_{\lambda^z_j\leq \eta_L}\Big) \log \Big( 1+\frac{\eta_0^2}{(\lambda^z_j)^2}\Big)\Big],
	\end{align}
	where we also used that $\partial_\eta \log(\lambda^2+\eta^2)=\frac{2\eta}{\lambda^2+\eta^2}$ in the last line. For the first line of \eqref{eq_split}, we have
	\begin{align}
		\mathbb{E} \Big[\int_{0}^{\eta_0} \sum_{\lambda_j \geq \eta_2} \frac{\eta}{\lambda_j^2+\eta^2} \dd \eta  \Big]
		\leq &2\mathbb{E} \Big[\int_{0}^{\eta_0} \sum_{\lambda_j\geq \eta_2} \frac{\eta}{\lambda_j^2+\eta_2^2} \dd \eta  \Big]\nonumber\\
		\leq & \mathbb{E} \Big[ \sum_{j=1}^{n} \frac{\eta_0^2}{(\lambda^z_j)^2+\eta_2^2}   \Big]= \frac{\eta_0^2}{2\eta_2} \mathbb{E} \big[\Im\mathrm{Tr}G^{z}(\ii \eta_2)\big].
	\end{align}
	Note that the function $\eta\Im\mathrm{Tr}G^{z}(\ii \eta)$ is non-decreasing in $\eta>0$. Choosing  $\eta_3=n^{-1+\xi}$
    for any fixed $\xi \in (0,\epsilon/2)$,
	\begin{align}\label{eq_last}
		\mathbb{E} \Big[\int_{0}^{\eta_0} \sum_{\lambda_j\geq \eta_2} \frac{\eta}{\lambda_j^2+\eta^2} \dd \eta  \Big] \leq
		\frac{\eta_0^2}{\eta_2} \frac{\eta_3}{\eta_2} \mathbb{E} \big[\Im\mathrm{Tr}G^{z}(\ii \eta_3)\big] =O (n^{-\epsilon/2}),
	\end{align}
	where we used the local law in \eqref{eq:local_law_G_z}  for $\eta_3=n^{-1+\xi}$
    and that $m^z$ from \eqref{rho} is bounded.

	We next estimate the last line of \eqref{eq_split}. Note that the non-negative eigenvalues $\big(\lambda^{z}_{ i}\big)_{i=1}^n$ of $H^{z}$ coincide with the singular values of $X^z=(A-zB)/\sqrt{n}$, where $A$ and $B$ are independent Girko matrices satisfying \ref{it:condensity}-\ref{it:condmom}. 
     Recall from \cite[Lemma 3.4]{Davies} (see also \cite{invert,wegner}) 
    that there exists a constant $C$ depending on $C_0$ and $D_0$ from \ref{it:condensity} such that the smallest singular value $\lambda_1^z$ of $(A-zB)/\sqrt{n}$ with $|z|\leq C_0$ satisfies
	\begin{align}\label{eq:tail_estimate}
		\mathbb{P} \big(\lambda^z_1 \leq t \big) \leq C n^2 t^2, \qquad \forall t>0,\quad |z|\leq C_0.
	\end{align}
 Note that for $\eta_2=n^{-1-\epsilon/2}$ and $\eta_3=n^{-1+\xi'}$ for any small $\xi'>0$
	$$ \frac{\eta_3}{\eta_2} \Im\mathrm{Tr}G^{z}(\ii \eta_3) \geq \Im\mathrm{Tr}G^{z}(\ii \eta_2)=2\sum_{j=1}^{n} \frac{\eta_2}{(\lambda^z_j)^2+\eta_2^2}  \geq \frac{2}{\eta_2} \# \big\{ 1\leq j\leq n: \lambda_j^z \leq \eta_2 \big\}.$$
	Hence using the local law in \eqref{eq:local_law_G_z}, by the definition of stochastic domination in \eqref{prec}, we have
	\begin{align}\label{eq:rigidity}
		\theta:=\#\big\{j\in \llbracket n
        \rrbracket: \lambda^z_j \leq n^{-1} \big\} \leq n^{\xi'},
	\end{align}
    for any small $\xi'>0$, with very high probability. Using \eqref{eq:tail_estimate} for $t=\eta_2=n^{-1-\epsilon/2}$ and \eqref{eq:rigidity} for $\xi'<\epsilon/100$, the first sum in the last line of \eqref{eq_split} is bounded by
	\begin{align}\label{eq_middle}
		\mathbb{E}\Big[ \sum_{\eta_L \leq  \lambda^z_j \leq \eta_2} \log \Big( 1+\frac{\eta_0^2}{(\lambda^z_j)^2}\Big)\Big] &\leq 
        \mathbb E \Big[
        \theta \log \Big( 1+\frac{\eta_0^2}{\eta_L^2}\Big)
        1_{\lambda_1^z \leq \eta_2}   
        \Big]\\
        &\leq
        n^{\xi'} C_L (\log n)\mathbb{P}\big( \lambda^z_1 \leq \eta_2 \big) =O(n^{-\epsilon/2}),
	\end{align}
where we have used
that, for every $D>0$, the inequality
$\mathbb E[\theta 1_{\theta > n^{\xi'}}]
\leq 
n\mathbb E[1_{\theta > n^{\xi'}}] 
\leq n^{-D}$ holds for $n$ large enough.
    For the last part in \eqref{eq_split} with tiny eigenvalues $\lambda^z_j \leq \eta_L=n^{-L}$, using that $\log ((\lambda^z_j)^2 + (\eta_0)^2) \leq 0$ 
    for $\lambda^z_j \leq \eta_L$ and the tail bound for the smallest singular value in \eqref{eq:tail_estimate}, we obtain
    \begin{align}\label{eq_tiny}
		\mathbb{E}\Big[ \sum_{ \lambda^z_j \leq \eta_L} \log \Big( 1+\frac{\eta_0^2}{(\lambda^z_j)^2}\Big)\Big] \leq & 2 \mathbb{E}\Big[ \sum_{ \lambda^z_j \leq \eta_L} -\log (\lambda^z_j)\Big] \leq 
        2n \mathbb{E} \Big[-\log \big(\lambda^z_1\big) \mathbf{1}_{\lambda^z_1 \leq \eta_L}  \Big]\nonumber\\
		= &  2n \bigg(
        L \log n \mathbb P \big(\lambda^z_1 
        \leq n^{-L} \big) + 
        \int_{L \log n}^{\infty} \mathbb{P}\big(\lambda^z_1 \leq \mathrm{e}^{-s} \big)\dd s \bigg) =O(n^{-100}),
	\end{align}
    by choosing a sufficiently large $L>0$ using \eqref{eq:tail_estimate}.
  
	Summing up \eqref{eq_last}, \eqref{eq_middle} and \eqref{eq_tiny}, we conclude from \eqref{int_small} and \eqref{eq_split} that
	\begin{align}
		\mathbb{E}| I^{\leq\eta_0}| \leq \frac{1}{2\pi} \int \big|\Delta_z f_n(z) \big| \mathbb{E} \Big[\int_{0}^{\eta_0} \sum_{j=1}^{n} \frac{\eta}{(\lambda_j^z)^2+\eta^2} \dd \eta \Big] \dd^2 z =O\big( n^{-\epsilon/2+\nu}\big), 
	\end{align}
	 where we also used that $\int|\Delta_z f_n(z)| \dd^2 z  \leq n^{\nu}$. This finished the proof of Lemma \ref{lemma_tiny}.

\end{proof}

\begin{proof}[Proof of Lemma \ref{lemma_GFT}]
	Define the following the Ornstein--Uhlenbeck matrix flows
	\begin{equation}\label{OU}
		\dd  \A_t=-\frac{1}{2} \A_t \dd t+\frac{\dd  \mathcal{B}^{(1)}_t}{\sqrt{n}}, \qquad \dd  \B_t=-\frac{1}{2} \B_t \dd  t+\frac{\dd  \mathcal{B}^{(2)}_t}{\sqrt{n}}, 
	\end{equation}
	with initial ensembles $\A_0=\A=A/\sqrt{n}$ and $\B_0=\B=B/\sqrt{n}$, where $A,B$ are independent Girko matrices satisfying \ref{it:condensity} --\ref{it:condmom}, and $\mathcal{B}^{(1)}_t$ and $\mathcal{B}^{(2)}_t$ are two independent $n\times n$ matrices whose entries are i.i.d. standard complex Brownian motions. Then for any $z \in \C$ we set $X^z_t=\A_t-z \B_t$ as in \eqref{X}, use $X^z_t$ to construct the Hermitised matrix $H^z_t$ as in \eqref{eq_hermitise} and define the corresponding Green function as in \eqref{eq_resolvent}, \ie
\begin{align}\label{xh}
    H^{z}_t=\begin{pmatrix}
		0  &   X_t^z \\
		(X_t^z)^*    & 0
	\end{pmatrix}, \quad\qquad G^z_t(w)=(H^z_t-w)^{-1}, \qquad w \in \C \setminus \R, \quad z\in \C,\quad t\geq 0.
\end{align}
We additionally introduce the following notations:
\begin{align}\label{def_HAB}
H_t^{(1)}=\begin{pmatrix}
		0  &   \A_t \\
		(\A_t)^*    & 0
	\end{pmatrix}, \qquad H_t^{(2)}=\begin{pmatrix}
		0  &   \B_t \\
		(\B_t)^*    & 0
	\end{pmatrix}, \qquad \dd\mathcal{M}^{(l)}_t=\begin{pmatrix}
		0  &   \dd\mathcal{B}^{(l)}_t \\
		\big(\dd\mathcal{B}^{(l)}_t\big)^*    & 0
	\end{pmatrix}, \qquad l=1,2.
\end{align}
For notational simplicity, we may often drop the dependence $t$ and write $H^{(l)}_t=(h^{(l)}_{ij})_{i,j\in 
\llbracket 2n \rrbracket}$ for $l=1,2$.
Note that the non-zero entries $(h^{(l)}_{aB})$ for $a\in \llbracket n \rrbracket$ and 
$B\in \llbracket n+1,2n \rrbracket$ are independent random variables that depend on time.

Recall the definition of $\II$ in \eqref{split_I}. Note that $\mathrm{Tr} G_t^{z}(\ii \eta)$ is purely imaginary due to the block structure of $H_t^z$ in \eqref{xh}, so we have $\mathrm{Tr} G_t^z(\ii \eta)=\ii\Im \mathrm{Tr} G_t^z(\ii \eta)$. Then we define the following short-hand
\begin{align}\label{def_L}
	\II_t=\frac{\ii}{4\pi} \int \Delta_z f_n(z) \int_{\eta_0}^{T} \mathrm{Tr} G^z_t(\ii \eta) \dd \eta \dd^2 z,  
\end{align}
with $T=n^{100}$ and $\eta_0=n^{-1-\epsilon}$ for $0<\epsilon<c_0/100$. We then use the matrix flows \eqref{OU} and Itô's formula to derive the dynamics of $G^{z}_t$, and hence $\II_t$, as a function of $\big\{h^{(l)}_{aB}\big\}$ in \eqref{def_HAB} for $a\in 
\llbracket n \rrbracket$ and $B\in \llbracket n+1,2n \rrbracket$. More precisely, 
\begin{align}
\dd \II_t=& \Theta \dd t+  \dd M_t,
\end{align}
with the drift function $\Theta$ given by
\begin{align}\label{eq_drift}
\Theta =   \sum_{a=1}^n \sum_{B=n+1}^{2n} \sum_{l=1}^2 \left(-\frac{1}{2}  h^{(l)}_{aB} \frac{  \partial \II_t}{\partial h^{(l)}_{aB}}  -\frac{1}{2} \overline{h^{(l)}_{aB}} \frac{\partial \II_t}{\partial \overline{h^{(l)}_{aB}}}  + \frac{1}{n} \frac{\partial^2 \II_t}{\partial h^{(l)}_{aB} \partial  \overline{h^{(l)}_{aB}}} \right)
=:&\Theta_1+\Theta_2+\Theta_3,
\end{align}
and the diffusion term $\dd M_t$ given by
\begin{align}\label{eq_diffusion}
\dd M_t =&\sum_{a=1}^n \sum_{B=n+1}^{2n} \sum_{l=1}^2  \left(\frac{ \partial \II_t}{\partial h^{(l)}_{aB}}  \frac{\dd\mathcal{M}^{(l)}_{aB}}{\sqrt{n}} + \frac{\partial \II_t}{\partial \overline{h^{(l)}_{aB}}} \frac{\big(\dd\mathcal{M}^{(l)}_{aB}\big)^*}{\sqrt{n}}  \right).
\end{align}
We point out that $I_t^{\geq \eta_0}$ is viewed
as a function of $(\widetilde A_t, \widetilde B_t)$,
and the derivatives are taken with respect
to these variables.
Note that the differentiation rules below are obtained from the definition of resolvent in \eqref{xh} and \eqref{def_HAB}:
	\begin{align}
    &\frac{\partial (G^z_t)_{ij}}{\partial h^{(1)}_{aB}}=-(G^z_t)_{ia} (G^z_t)_{Bj}, \qquad \frac{\partial (G^z_t)_{ij}}{\partial \overline{h^{(1)}_{aB}}}=-(G^z_t)_{iB} (G^z_t)_{aj},\label{eq_diff_1}\\
    &\frac{\partial (G^z_t)_{ij}}{\partial h^{(2)}_{aB}}=z(G^z_t)_{ia} (G^z_t)_{Bj}, \qquad \frac{\partial (G^z_t)_{ij}}{\partial \overline{h^{(2)}_{aB}}}=\overline{z} (G^z_t)_{iB} (G^z_t)_{aj},\label{eq_diff_2}
    \end{align}
for any $i,j \in \llbracket 2n \rrbracket $, $a \in 
\llbracket n \rrbracket$, and 
$B\in \llbracket n+1,2n \rrbracket$.

Since $F$ is continuously differentiable up to fifth order, by Itô's formula again and taking the expectation, we have
\begin{align}\label{eq_derivative}
\dd \mathbb{E}\big[ F(\II_t)\big]=\mathbb{E} \Big[ F'(\II_t) \big( \Theta_1+\Theta_2+\Theta_3 \big) \Big] \dd t + \frac{1}{2} \mathbb{E}\Big[ F''(\II_t) \big\< \dd M_t, \dd M_t  \big\>\Big],
\end{align}
using the martingale property of Brownian motions. 
From
\eqref{eq_diffusion} and the 
covariations 
$\langle \mathrm d \mathcal M_{aB}^{(\ell)}, 
\mathrm d \overline{\mathcal M}_{a'B'}^{(\ell')} \rangle
= \delta_{\ell,\ell'} \delta_{a,a'} \delta_{B,B'}$ 
and
$\langle \mathrm d \mathcal M_{aB}^{(\ell)}, 
\mathrm d \mathcal M_{a'B'}^{(\ell')} \rangle
= 0$ with $a \in 
\llbracket n \rrbracket$ and 
$B\in \llbracket n+1,2n \rrbracket$, we obtain
\begin{align}\label{eq_variation}
\frac{1}{2} \mathbb{E}\Big[ F''(\II_t) \big\< \dd M_t, \dd M_t  \big\>\Big]=&
\frac{1}{n} \sum_{l=1}^2 \mathbb{E} \left[F''(\II_t)
 \sum_{a=1}^n \sum_{B=n+1}^{2n}  \left(   \frac{ \partial \II_t}{\partial h^{(l)}_{aB}}      \frac{\partial \II_t}{\partial \overline{h^{(l)}_{aB}}} \right)\right] \times \dd t.
\end{align}
To compute the first drift term in \eqref{eq_derivative}, we apply the cumulant expansion formula (for a formal statement, see \cite[Lemma 7.1]{zbMATH06775355}) with respect to $\big(h^{(l)}_{aB}\big)$. Recall that the $(p,q)$-cumulant of a complex-valued random variable $\chi$ with finite moments are defined to be 
\begin{align}\label{cumulant_pq}
	\kappa^{(p,q)}(\chi)=(-\ii)^{p+q} \Big( \frac{\partial^{p+q}}{\partial s^p \partial t^q} \log \mathbb{E} \big[\mathrm{e}^{\ii s \chi+\ii t \overline{\chi}} \big] \Big) \Big|_{s,t=0},
\end{align}
provided the right side above exists. From \ref{it:condmom4}-\ref{it:condmom}, we find that the second cumulants of $\big(h^{(l)}_{aB}\big)$ are invariant over time, \ie
\begin{align}\label{eq_var}
	\kappa^{(1,1)}\big(h^{(l)}_{aB}\big)=\frac{1}{n}, \qquad \kappa^{(0,2)}\big(h^{(l)}_{aB}\big)=\kappa^{(2,0)}\big(h^{(l)}_{aB}\big)=0, 
\end{align}
and the higher-order cumulants are bounded by
\begin{align}\label{eq_cumu_bound}
	|\kappa^{(p,q)}\big(h^{(l)}_{aB}\big)| \lesssim  n^{-2-c_0},\qquad p+q=3,4, \qquad |\kappa^{(p,q)}\big(h^{(l)}_{aB}\big)| \lesssim  n^{-\frac{p+q}{2}}, \qquad p+q\geq 5,
\end{align}
uniformly for any $t\geq 0$. 
We then write out the second order terms in the expansions, \ie
\begin{align}
\mathbb{E} \big[ F'(\II_t) \Theta_1 \big]=&-\frac{1}{2}\sum_{a=1}^n \sum_{B=n+1}^{2n} \sum_{l=1}^2  \mathbb{E} \left[ h^{(l)}_{aB}  F'(\II_t) \frac{  \partial \II_t}{\partial h^{(l)}_{aB}}  \right]\nonumber\\
=&-\frac{1}{2n}\sum_{a=1}^n \sum_{B=n+1}^{2n} \sum_{l=1}^2  \mathbb{E} \left[ \frac{\partial}{\partial \overline{h^{(l)}_{aB}}} \left\{ F'(\II_t) \frac{  \partial \II_t}{\partial h^{(l)}_{aB}} \right\}  \right]
+\mbox{higher order terms}\label{second_order},
\end{align}
 using \eqref{eq_var}. The second drift term in \eqref{eq_derivative} with $\Theta_2$ can be handled very similarly with $h^{(l)}_{aB}$ replaced by $\overline{h^{(l)}_{aB}}$. Then we observe that the above second order cumulant expansion terms will cancel precisely with the third drift term in \eqref{eq_drift} and the quadratic variation term  in \eqref{eq_variation}.
This key cancellation is a consequence of the fact that the second moments of the matrix entries in \eqref{OU} do not change with time. Therefore, the remaining terms in \eqref{eq_derivative} are the higher order cumulant expansion terms in \eqref{second_order}. More precisely, the cumulant expansions starting with the third-order terms ($p+q+1=3$) are given by
\begin{align}\label{eq_cumulant_exp}
	\frac{\dd \mathbb{E}\big[ F(\II_t)\big]}{\dd t}
	=&-\frac{1}{2} \sum_{a=1}^n \sum_{B=n+1}^{2n}
	\sum_{l=1}^2 \left( \sum_{p+q+1= 3}^{4}\frac{\kappa^{(p+1,q)}(h^{(l)}_{aB})}{p!q!}
	\mathbb{E} \left[   \frac{\partial^{p+q+1} F(\II_t)}{\partial (h^{(l)}_{aB})^{p+1} \partial (\overline{h^{(l)}_{aB}})^{q} } \right] \right)\nonumber\\
	&-\frac{1}{2} \sum_{a=1}^n \sum_{B=n+1}^{2n} \sum_{l=1}^2 \left(\sum_{p+q+1 = 3}^{4}\frac{\kappa^{(p+1,q)}(\overline{h^{(l)}_{aB}})}{p!q!}
	\mathbb{E} \left[   \frac{\partial^{p+q+1} F(\II_t) }{\partial (\overline{h^{(l)}_{aB}})^{p+1} \partial (h^{(l)}_{aB})^{q}}\right]\right)+O(n^{-1/2+c(\epsilon+\nu)}),
\end{align}
 where $\kappa^{(p+1,q)}(\cdot)$ are the cumulants as bounded in \eqref{eq_cumu_bound} and we stopped the expansions at the fourth order with $p+q+1=4$ such that the truncation error is bounded by $O(n^{-1/2+c(\epsilon+\nu)})$ for some constant $c>0$. To estimate the truncation error, we used the cumulant bounds in \eqref{eq_cumu_bound}, the differentiation rules in \eqref{eq_diff_1}-\eqref{eq_diff_2}, the deterministic bound $\|G_t^z(\ii \eta)\| \leq \eta^{-1}$, together with the estimates in Lemma \ref{lemma_diff} below.  Such truncation argument is standard and frequently used in previous works \eg \cite{zbMATH06775355,zbMATH07549410,cipolloni2024universalityextremaleigenvalueslarge}, so we omit the details. We remark that the last error term holds uniformly for any $0\leq t\leq 100 \log n$. 

To estimate the third and higher order terms in \eqref{eq_cumulant_exp}, we introduce the following lemma, whose proof is postponed until the end of this section.
\begin{lemma}\label{lemma_diff}
Recall $\II_t$ in \eqref{def_L} with $\eta_0=n^{-1-\epsilon}$ and compactly supported $f_n$ satisfying $\int |\Delta_z f_n (z)| \dd^2 z \leq n^{\nu}$. Then
for any $k=k_1+k_2\geq 1$, we have
\begin{align}\label{eq_bound_k}
	\sup_{t\in[0,t_0]} \left|  \frac{\partial^k \II_t }{\partial (h^{(l)}_{aB})^{k_1} (\overline{h^{(l)}_{aB}})^{k_2}} \right| \prec n^{2k\epsilon+\nu}, \qquad l=1,2, \qquad t_0=100 \log n.
\end{align}
\end{lemma}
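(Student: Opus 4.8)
The plan is to differentiate $\II_t$ repeatedly with respect to the Hermitian entries $h_{aB}$ and $\overline{h_{aB}}$, propagating the differentiation through the resolvents $G^z_t(\ii\eta)$ using the rules in \eqref{eq_diff}, and then to control the resulting sums of products of Green function matrix entries by combining the local law \eqref{eq:local_law_G_z} with the deterministic bound $\|G^z_t(\ii\eta)\|\leq\eta^{-1}$ and the regularity of $f_n$. First I would note that $\partial^k_{h,\overline h}\,\mathrm{Tr}\,G^z_t(\ii\eta)$ is, by the Leibniz rule applied to \eqref{eq_diff}, a sum of $O_k(1)$ terms, each of the form (up to signs) $(G^z_t)_{\bullet a}(G^z_t)_{B\bullet}\cdots$ with $k$ resolvent factors whose free indices are all pinned to $a$ or $B$; summing the $k=1$ case over the remaining index gives $(G^z_t(\ii\eta))_{Ba}(G^z_t(\ii\eta))_{aB}$ etc., and for higher $k$ one gets entries like $(G^z_t)_{aa},(G^z_t)_{BB},(G^z_t)_{aB},(G^z_t)_{Ba}$. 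By the isotropic/entrywise local law these are $O_\prec(1)$ for $\eta\geq n^{-1+\xi}$ — but crucially our integral runs down to $\eta_0=n^{-1-\epsilon}$, below the local-law threshold, so on the window $[\eta_0,n^{-1+\xi}]$ we must fall back on the deterministic bound $|(G^z_t)_{xy}|\leq\|G^z_t(\ii\eta)\|\leq\eta^{-1}$, which costs $\eta^{-k}$; then $\int_{\eta_0}^{n^{-1+\xi}}\eta^{-k}\dd\eta\lesssim\eta_0^{-(k-1)}\lesssim n^{(1+\epsilon)(k-1)}$ for $k\geq2$, which is far too large, so a naive estimate fails and some additional structure must be used.

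The resolution I would pursue is the standard monotonicity/Ward-identity trick: the quantity $\eta\,\mathrm{Im}\,\mathrm{Tr}\,G^z_t(\ii\eta)=\sum_j \eta^2/((\lambda^z_j)^2+\eta^2)$ is monotone nondecreasing in $\eta>0$, and likewise each matrix element can be handled via $\sum_j|(G^z_t(\ii\eta))_{xj}|^2=\eta^{-1}\mathrm{Im}(G^z_t(\ii\eta))_{xx}$; combined with the rigidity bound \eqref{eq:rigidity} (only $n^{\xi'}$ eigenvalues below $n^{-1}$) and the smallest–singular-value tail \eqref{eq:tail_estimate}, one shows that for $\eta\in[\eta_0,n^{-1+\xi}]$ the individual resolvent entries are bounded, in the relevant averaged sense, by $n^{O(\epsilon)}$ rather than $\eta^{-1}$. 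Concretely, I would split $[\eta_0,T]=[\eta_0,n^{-1+\xi}]\cup[n^{-1+\xi},T]$ in the $\eta$–integral defining $\II_t$ (and its derivatives), use the local law on the upper piece to get each of the $k$ resolvent factors bounded by $O_\prec(1)$ and the $\eta$–integral contributing $\log n$, and on the lower piece use that $\eta^k|(G)_{xy}|^k\leq(\eta|(G)_{xy}|)^k$ with $\eta|(G^z_t(\ii\eta))_{xy}|\leq \eta\,\|G^z_t\|\leq 1$ while the prefactor $(G)^{k}$ picked up a genuine $\eta^{-k+?}$ only through the trace sum, which after using the spectral decomposition and \eqref{eq:rigidity}–\eqref{eq:tail_estimate} is at most $n^{O(\epsilon)+O(\xi')}$; integrating over $\eta\in[\eta_0,n^{-1+\xi}]$ of length $\lesssim n^{-1+\xi}$ then produces only $n^{O(\epsilon)}$. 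Multiplying by $\int|\Delta_z f_n(z)|\dd^2 z\leq n^{\nu}$ gives the claimed $n^{2k\epsilon+\nu}$ (the constant $2$ in the exponent being the slack that absorbs all the $O(\xi),O(\xi')$ and combinatorial losses, with $\xi,\xi'$ chosen $\ll\epsilon$).

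The main steps, in order, are: (i) expand $\partial^k_{h_{aB},\overline{h_{aB}}}\mathrm{Tr}\,G^z_t(\ii\eta)$ via \eqref{eq_diff} into $O_k(1)$ monomials in resolvent entries, each with indices confined to $\{a,B\}$; (ii) split the $\eta$–integral at $n^{-1+\xi}$; (iii) on the bulk range use the entrywise/isotropic local law \eqref{eq:local_law_G_z} plus $|m^z|\lesssim 1$ to bound each factor by $O_\prec(1)$, getting $O_\prec((\log n)\,n^{\nu})$; (iv) on the tiny range $[\eta_0,n^{-1+\xi}]$ use the spectral representation together with the rigidity \eqref{eq:rigidity} and smallest–singular–value tail \eqref{eq:tail_estimate} to see that the only dangerous contribution (the diagonal trace–type sum) is $O_\prec(n^{k\xi'})$ while off–diagonal factors stay bounded by $\eta\|G\|\leq1$ after the obvious rescaling, so the $\eta$–integral over a window of length $n^{-1+\xi}$ is $O_\prec(n^{O(\xi+\xi')})$; (v) insert $\int|\Delta_z f_n|\leq n^\nu$ and the uniformity–in–$z$ from the grid argument already established after \eqref{eq:rigidity}, then take $t_0=100\log n$ and note all bounds are uniform in $t$ because the flow \eqref{OU_X} keeps the variance fixed and the local law holds along the flow. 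The hard part, which I flagged above, is precisely handling the sub–local–law window $[\eta_0,n^{-1+\xi}]$: there the resolvent is genuinely large in operator norm, and one must carefully separate the few exceptionally small singular values (controlled by \eqref{eq:tail_estimate}–\eqref{eq:rigidity}) from the rest and exploit that each \emph{additional} derivative only replaces a resolvent entry by another resolvent entry (no net gain of $\eta^{-1}$ beyond what a single trace already carries), so that the growth is $n^{O(\epsilon)}$ per derivative rather than $\eta_0^{-1}\sim n^{1+\epsilon}$; getting the exponent down to the stated $2k\epsilon$ is a matter of choosing $\xi,\xi',$ and the grid spacing all polynomially small compared to $\epsilon$.
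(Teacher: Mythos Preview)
Your plan misses the key simplification that the paper exploits, and it invokes a tool that cannot deliver the stochastic domination you need.

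\textbf{The boundary-term trick.} The paper does \emph{not} bound the $\eta$-integral of $\partial^k_{h,\bar h}\mathrm{Tr}\,G^z_t(\ii\eta)$ directly. Instead it observes that after the \emph{first} $h$-derivative one has
\[
\frac{\partial \II_t}{\partial h_{aB}}
=-\frac{\ii}{4\pi}\int \Delta_z f_n(z)\int_{\eta_0}^T \big[(G^z_t(\ii\eta))^2\big]_{Ba}\,\dd\eta\,\dd^2z,
\]
and since $(G^z_t)^2=\ii\,\partial_\eta G^z_t(\ii\eta)$, the $\eta$-integral collapses to a boundary evaluation: up to an $O_\prec(n^{-10})$ error this equals $\frac{1}{4\pi}\int\Delta_z f_n(z)\,(G^z_t(\ii\eta_0))_{Ba}\,\dd^2 z$. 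All further $h,\bar h$-derivatives then act on a \emph{single resolvent entry at the fixed level $\eta=\eta_0$}, and each such derivative replaces one entry by a product of two entries (all with indices in $\{a,B\}$). There is no $\eta$-integral over the sub-local-law window to control at all. This is exactly the step you flag as ``the hard part'', and the paper sidesteps it entirely.

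\textbf{Extending the local law, not integrating through it.} The remaining task is to bound $|(G^z_t(\ii\eta_0))_{xy}|$ for $\eta_0=n^{-1-\epsilon}$. The paper does this by the Ward/monotonicity argument you mention, but applied pointwise: writing $\langle \mathbf x,G(\ii\eta_1)\mathbf y\rangle-\langle \mathbf x,G(\ii\eta_0)\mathbf y\rangle=\ii\int_{\eta_0}^{\eta_1}\langle\mathbf x,G^2\mathbf y\rangle\,\dd\eta$, bounding $|\langle\mathbf x,G^2\mathbf y\rangle|\le\eta^{-1}\sqrt{\mathrm{Im}\langle\mathbf x,G\mathbf x\rangle\,\mathrm{Im}\langle\mathbf y,G\mathbf y\rangle}$ via Cauchy--Schwarz and Ward, and using monotonicity of $\eta\,\mathrm{Im}\langle\mathbf x,G(\ii\eta)\mathbf x\rangle$ to replace $\eta$ by $\eta_1=n^{-1+\xi}$ where the local law applies. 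This yields $\max_{x,y}|(G^z_t(\ii\eta_0))_{xy}|\prec n^{2\epsilon}$ with very high probability, and then the $k$-th derivative of $\II_t$ is a finite sum of products of $k$ such entries times $\int|\Delta_z f_n|$, giving $n^{2k\epsilon+\nu}$.

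\textbf{Why your route has a gap.} Your steps (iv)--(v) invoke the smallest-singular-value tail \eqref{eq:tail_estimate} to control eigenvalues in $[\eta_0,n^{-1+\xi}]$. But \eqref{eq:tail_estimate} gives only $\mathbb{P}(\lambda^z_1\le\eta_0)\lesssim n^2\eta_0^2=n^{-2\epsilon}$, a \emph{polynomially small} probability, not a ``very high probability'' bound. That is fine for the \emph{expectation} estimate in Lemma~\ref{lemma_tiny}, but it cannot produce the stochastic domination $\prec$ required in \eqref{eq_bound_k}. Your phrase ``in the relevant averaged sense'' is precisely where the argument breaks: the lemma is a high-probability statement, not a first-moment one. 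The rigidity \eqref{eq:rigidity} alone (which \emph{is} high-probability) does not control how close the smallest few eigenvalues can be to zero, so it does not by itself bound $|(G(\ii\eta))_{xy}|$ on $[\eta_0,n^{-1+\xi}]$.
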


Using the cumulant bounds in \eqref{eq_cumu_bound} and the differentiation bounds in \eqref{eq_bound_k} to estimate the third and higher order terms on the right side of \eqref{eq_cumulant_exp}, we obtain that
\begin{align}\label{eq_derivative_F}
	\sup_{t\in[0,t_0]} \Big|\frac{\dd \mathbb{E}\big[ F(\II_t)\big]}{\dd t} \Big| =O(n^{-c_0/2}),
\end{align}
where we also used that $\epsilon<c_0/100$ and $\nu<10^{-4}c_0$. Integrating \eqref{eq_derivative_F} over $t \in [0, t_0]$ with $t_0=100\log n$ we obtain 
\begin{align}\label{eq_long_time}
	\Big| \mathbb{E}\big[ F(\II_0)\big]-\mathbb{E}\big[ F(\II_{t_0})\big] \Big| =O(n^{-c_0/4}). 
\end{align}
Note that $H^z_{t}$ is defined as in \eqref{xh} with the time dependent matrix $X^z_t=\A_t-z \B_t$, where $\A_t,\B_t$ are defined in \eqref{OU}. For any fixed $t\ge 0$, $X^z_t$ in distribution equals to
$$Y^z_t :{=} \mathrm{e}^{-\frac{t}{2}} X^z+\sqrt{1-\mathrm{e}^{-t}} X^{\mathrm{Gin}},$$
with $Y^z_0=X^z=(A-zB)/\sqrt{n}$ and $Y^z_\infty=(A^{\mathrm{Gin}}-zB^{\mathrm{Gin}})/\sqrt{n}$ being the Ginibre counterpart which is independent of $X^z$. We then use $G_t^{Y^z}$ to denote the resolvent of $Y_t^z$. Using the resolvent identity, we obtain
\begin{align}\label{eq_final}
	\|G^{Y^z}_{t_0}(\ii \eta)-G^{Y^z}_{\infty}(\ii \eta)\| \leq \|G^{Y^z}_{t_0}(\ii \eta)\| \|G^{Y^z}_{\infty}(\ii \eta)\| \|Y^z_{t_0}-Y^z_{\infty}\| 
	\prec n^{-50}, \qquad \eta \geq \eta_0,
\end{align}
where we also used that $\|G(\ii \eta)\| \leq \eta^{-1}$, $|x_{ij}| \prec n^{-1/2}$ from condition \ref{it:condmom} and $t_0=100\log n$. Thus using that $F$ has uniformly bounded derivatives up to fifth order and $\int|\Delta_z f(z)| \dd^2 z  \leq n^{\nu}$ with $\nu<10^{-4}c_0$,  we obtain from \eqref{def_L} that
\begin{align}\label{eq_long_time_2}
	\Big| \mathbb{E}\big[ F(\II_{t_0})\big]-\mathbb{E}^{\mathrm{Gin}}\big[ F(\II)\big] \Big| =O(n^{-10}). 
\end{align}
Combining \eqref{eq_long_time} with \eqref{eq_long_time_2}, we conclude
\begin{align}
	\Big| \mathbb{E}\big[ F(\II)\big]-\mathbb{E}^{\mathrm{Gin}}\big[ F(\II)\big] \Big|=\Big| \mathbb{E}\big[ F(\II_0)\big]-\mathbb{E}^{\mathrm{Gin}}\big[ F(\II)\big] \Big| =O(n^{-c_0/4}). 
\end{align}
We hence finish the proof of Lemma \ref{lemma_GFT}.	
\end{proof}

We end this section with the proof of Lemma \ref{lemma_diff}.

\begin{proof}[Proof of Lemma \ref{lemma_diff}]

	Recall $\II_t$ in \eqref{def_L} and the differentiation rules in \eqref{eq_diff_1}-\eqref{eq_diff_2}. Below we will present the proof for $l=1$ using \eqref{eq_diff_1}, and the proof for $l=2$ is very similar using \eqref{eq_diff_2} and that $f_n(z)=0$ for $|z|>C_0$. 
Using \eqref{eq_diff_1} and $G^2=-\ii \frac{\dd}{\dd \eta} G(\ii \eta)$, we have
\begin{align}\label{diff_L}
	\frac{\partial \II_t }{\partial h^{(l)}_{aB}}
	=&\frac{\ii}{4\pi} \int \Delta_z f_n(z) \int_{\eta_0}^{T}  \big[ \big(G_t^z(\ii \eta)\big)^2  \big]_{Ba}  \dd \eta \dd^2 z=-\frac{1}{4\pi} \int \Delta_z f_n(z)  \big( G^z_t(\ii \eta_0)\big)_{Ba} \dd^2 z+O_\prec(n^{-10}),
\end{align}
where we also used that $\|G(\ii \eta)\| \leq \eta^{-1}$ for $\eta=T=n^{100}$. Note that the variances of entries of $X^z_t$ do not change with time (see \eqref{eq_var}). Hence, for each fixed $t\geq 0$ and $|z|\leq C_0$, the local law as in \eqref{eq:local_law_G_z} also holds true for $G^z_t$ with very high probability.  To estimate the right side of \eqref{diff_L} with $\eta_0=n^{-1-\epsilon}$, we next extend the estimates in \eqref{eq:local_law_G_z} down to $\eta=n^{-1-\epsilon}$. 
Setting $\eta_1=n^{-1+\xi}$ with $\xi\leq \epsilon/100$ and using that $G^2=-\ii \frac{\dd}{\dd \eta} G(\ii \eta)$, we have
\begin{align}
	\Big|\big\langle \mathbf{x}, G^z(\ii \eta_1)\mathbf{y} \big\rangle-\big\langle \mathbf{x}, G^z(\ii \eta_0)\mathbf{y} \big\rangle\Big|=&\Big|\int_{\eta_0}^{\eta_1} \big\langle \mathbf{x}, \big(G^z(\ii \eta)\big)^2\mathbf{y} \big\rangle \dd \eta \Big|\nonumber\\
	\leq & \int_{\eta_0}^{\eta_1} \frac{1}{\eta}\sqrt{\big\langle \mathbf{x}, \Im G^z(\ii \eta) \mathbf{x} \big\rangle\big\langle \mathbf{y}, \Im G^z(\ii \eta) \mathbf{y} \big\rangle}   \dd \eta
\end{align}
for any deterministic unit vectors $\mathbf{x}, \mathbf{y}\in \mathbb{C}^{2n}$, where we also used the Cauchy-Schwarz inequality and the Ward identity $GG^*=\eta^{-1}\Im G$. By a simple spectral decomposition of $H^z$, it is easy to check that the function $\eta \big\langle \mathbf{x}, \Im G^z(\ii \eta) \mathbf{x} \big\rangle$ is increasing in $\eta>0$. Hence we obtain
\begin{align}
	\Big|\big\langle \mathbf{x}, G^z(\ii \eta_1)\mathbf{y} \big\rangle-\big\langle \mathbf{x}, G^z(\ii \eta_0)\mathbf{y} \big\rangle\Big|\leq & \sqrt{\big\langle \mathbf{x}, \Im G^z(\ii \eta_1) \mathbf{x} \big\rangle\big\langle \mathbf{y}, \Im G^z(\ii \eta_1) \mathbf{y} \big\rangle} \int_{\eta_0}^{\eta_1} \frac{\eta_1}{\eta^2} \dd \eta \prec  n^{2\epsilon}
\end{align}
where we also used that $\big\langle \mathbf{x}, \Im G^z(\ii \eta_1) \mathbf{x} \big\rangle \prec 1$ from \eqref{eq:local_law_G_z} and \eqref{rho} for $\eta_1=n^{-1+\xi}$ with $\xi<\epsilon/100$. Again using the local law from \eqref{eq:local_law_G_z} and \eqref{rho} for $\eta_1=n^{-1+\xi}$, $\mathbf{x}=\mathbf{e}_{x}$, and $\mathbf{y}=\mathbf{e}_{y}$, we conclude that, for each fixed $|z|\leq C_0$ and $t\geq 0$,
$$
\max_{x,y \in \llbracket 2n \rrbracket} \Big|\big( G^{z}_t(\ii \eta_0) \big)_{xy}\Big| \prec n^{2\epsilon}, \qquad \mbox{~with~} \quad \eta_0=n^{-1-\epsilon}.
$$
 Since $G^{z}_t$ is  H\"{o}lder continuous in both $z$ and $t$ almost surely from the resolvent identity and the flow in \eqref{OU}, one can show that the above bound holds true simultaneously for all $|z|\leq C_0$ and $t\in [0,t_0]$ with $t_0=100\log n$, \ie
\begin{align}\label{eq_G_bound}
	\sup_{t\in [0,t_0]} \sup_{|z|\leq C_0} \left\{ \max_{x,y \in \llbracket 2n \rrbracket} \Big|\big( G^{z}_t(\ii \eta_0) \big)_{xy}\Big| \right\} \prec n^{2\epsilon}, \qquad \mbox{~with~} \quad \eta_0=n^{-1-\epsilon},
\end{align}
 using a grid argument and taking the union bound (see similar explanations below \eqref{rho}).
Hence using that $\int|\Delta_z f_n(z)| \dd^2 z  \leq n^{\nu}$, we conclude from \eqref{diff_L} that
\begin{align}
	\sup_{t\in [0,t_0]} \Big|\frac{\partial \II_t }{\partial h^{(l)}_{aB}}\Big|  \prec n^{2\epsilon+\nu}.
\end{align} 
The same result also applies to $\partial /\partial \overline{h^{(l)}_{aB}}$. In general, for any $k\geq 1$, using the differentiation rules in \eqref{eq_diff_1}-\eqref{eq_diff_2} and the local law estimates in \eqref{eq_G_bound} repeatedly, we obtain the estimate for any $k$-th derivatives of $\II_t$ as stated in \eqref{eq_bound_k}. This ends the proof of Lemma \ref{lemma_diff}.
\end{proof}

\section{Lemmas on weak convergence}

We collect here classical results used in the proof of Theorem \ref{th:spherical:girko:gininf}
that we were not able to locate in a textbook, and we provide abridged proofs for the convenience of readers.

The first one states that we can consider compactly supported smooth functions instead of continuous ones in the definition of weak convergence. This is a natural and familiar variant of the Portmanteau theorem. The second one states that the convergence of point processes is characterized by compactly supported smooth test functions. The third and final one
is about the delta method in the context of point process.

\begin{lemma}[Characterization of convergence in law] \label{lem:WeakSmoothCompactly}
		If $X,X_1,X_2,\ldots$ are random variables such that 
		\begin{equation}
			\label{eq:limit}
			\lim_{n \to \infty} 
		\mathbb E[F(X_n)]
		= \mathbb E[F(X)]
		\end{equation}
		for every $\mathcal{C}^\infty$ compactly supported $F: \mathbb R \to \mathbb R$,
    	then the sequence ${(X_n)}_{n\geq1}$ converges in law to $X$.
\end{lemma}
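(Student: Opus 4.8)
The plan is to reduce convergence in law against all bounded continuous test functions to convergence against $\mathcal{C}^\infty_c$ functions, which is what the hypothesis gives us. First I would recall that a sequence of probability measures $\mu_n$ on $\mathbb{R}$ converges weakly to $\mu$ if and only if it is tight and every weakly convergent subsequence has limit $\mu$; alternatively, and more directly, I would use the Portmanteau-type criterion that weak convergence is equivalent to $\limsup_n \mu_n(C) \le \mu(C)$ for all closed $C$, or to convergence of $\int g\, \dd\mu_n$ for $g$ ranging over a suitable convergence-determining class. The cleanest route is to show that $\mathcal{C}^\infty_c(\mathbb{R})$ is convergence-determining, given that we already know convergence holds against it.

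Here is the argument I would write. Fix a bounded continuous $g: \mathbb{R} \to \mathbb{R}$, say $\|g\|_\infty \le K$, and $\varepsilon > 0$; the goal is $\limsup_n |\mathbb{E}[g(X_n)] - \mathbb{E}[g(X)]| \le C\varepsilon$. Two things must be controlled: the tails and the approximation on a compact set. For the tails, first I would establish that $(X_n)$ is tight: applying the hypothesis to a smooth function $\chi_R$ with $\mathbf{1}_{[-R,R]} \le \chi_R \le \mathbf{1}_{[-R-1,R+1]}$ (so $1 - \chi_R$ is continuous but not compactly supported --- instead use $\chi_R \in \mathcal{C}^\infty_c$ with $\mathbf{1}_{[-R,R]} \le \chi_R \le \mathbf{1}_{[-R-1,R+1]}$ directly, which is compactly supported) gives $\liminf_n \mathbb{P}(|X_n| \le R+1) \ge \liminf_n \mathbb{E}[\chi_R(X_n)] = \mathbb{E}[\chi_R(X)] \ge \mathbb{P}(|X| \le R)$. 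Choosing $R$ with $\mathbb{P}(|X| > R) < \varepsilon$ yields $\limsup_n \mathbb{P}(|X_n| > R+1) \le \varepsilon$, so $(X_n)$ is tight. Next, on the compact set $[-R-2, R+2]$, by the Stone--Weierstrass theorem or mollification there is $h \in \mathcal{C}^\infty_c(\mathbb{R})$ with $\|h\|_\infty \le K$ and $\sup_{|x| \le R+2} |g(x) - h(x)| < \varepsilon$. Then
\[
|\mathbb{E}[g(X_n)] - \mathbb{E}[h(X_n)]| \le \varepsilon + 2K\,\mathbb{P}(|X_n| > R+2) \le \varepsilon + 2K\,\mathbb{P}(|X_n| > R+1),
\]
and similarly for $X$. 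Combining with $\mathbb{E}[h(X_n)] \to \mathbb{E}[h(X)]$ from the hypothesis, and taking $\limsup_n$, gives $\limsup_n |\mathbb{E}[g(X_n)] - \mathbb{E}[g(X)]| \le 2\varepsilon + 4K\varepsilon$; since $\varepsilon$ is arbitrary, this is $0$.

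The main obstacle is a minor but real subtlety: the naive "indicator of a large interval" approach would use a non-compactly-supported test function ($1 - \chi_R$), so one has to be careful to phrase the tightness step entirely in terms of compactly supported smooth functions --- which works because a compactly supported smooth bump dominating $\mathbf{1}_{[-R,R]}$ suffices to lower-bound $\liminf_n \mathbb{P}(|X_n| \le R')$. Everything else is routine: the mollification of $g$ on a compact set, and the three-$\varepsilon$ estimate. No serious analytic difficulty arises; the only thing to get right is the logical order (tightness first, then compact approximation), and to keep all auxiliary test functions in $\mathcal{C}^\infty_c$ so the hypothesis applies verbatim.
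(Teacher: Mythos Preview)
Your proof is correct and follows essentially the same route as the paper: establish tightness via a smooth compactly supported bump function dominating an indicator, then approximate the bounded continuous test function on a large compact set by a $\mathcal{C}^\infty_c$ function and control the tails by tightness. The paper orders the steps slightly differently (it first upgrades to continuous compactly supported test functions, then proves tightness, then handles bounded continuous $F$ by the cutoff $F(x)g(x/M)$), but the ideas are identical.
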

	
\begin{proof}
		First, notice that if \eqref{eq:limit} is true for	compactly supported smooth functions, then it is true for	compactly supported continuous functions.
		Indeed, any compactly supported continuous function
		$F: \mathbb R \to \mathbb R $  is a uniform limit of 
		compactly supported smooth functions
		$F_k$. Then, for every $k \geq 1$ we have
		\[
		\limsup_{n \to \infty} \mathbb E[F_k(X_n)] -
		\|F- F_k\|_{\infty}
		\leq 
		\liminf_{n \to \infty} \mathbb E[F(X_n)]
		\leq 
		\limsup_{n \to \infty} \mathbb E[F(X_n)]
		\leq \limsup_{n \to \infty} \mathbb E[F_k(X_n)] + 
		\|F- F_k\|_{\infty}\]
		which implies the desired equality by taking $k \to \infty$.
		
		Next, assuming that \eqref{eq:limit} is true for compactly supported smooth functions, 
		we may show that $(X_n)_{n \geq 1}$
		is tight.
		For this, we may take a smooth function 
		$g:\mathbb R \to [0,1]$
		supported on $[-1,1]$
		such that $g|_{[-1/2,1/2]} \equiv 1$.
		Using that $\lim_{M \to \infty} g(x/M) = 1$
		and that $g$ is bounded by $1$, we obtain that
		$\lim_{M \to \infty}\mathbb E[g(X/M)] = 1$.
		Then, for any $\varepsilon > 0$, we may find
		an $M>0$ such that  
		$\mathbb E[g(X/M)] > 1 - \varepsilon$.
		Finally, since
		$\lim_{n \to \infty} 
		\mathbb E[g(X_n/M)] = \mathbb E[g(X/M)]$, we know that 
		$\mathbb E[g(X_n/M)] > 1-\varepsilon$
		for $n$ large enough so that
		\[\mathbb P(|X_n| \leq M) \geq \mathbb E[g(X_n/M)] > 1-\varepsilon.\]
		Finally, if $F$ is a bounded continuous function,
		we may write $F(x) = F(x) g(x/M) + F(x)(1-g(x/M))$ so that
				\[
				\mathbb E[F(X)g(X/M)] -
		\|F\|_{\infty} 
		\sup_{n \geq 1}\mathbb P(|X_n| \geq M/2)
		\leq 
				\liminf_{n \to \infty} \mathbb E[F(X_n)]\]
		and 
		\[
		\limsup_{n \to \infty} \mathbb E[F(X_n)]
		\leq \mathbb E[F(X)g(X/M)] + 
				\|F\|_{\infty} 
		\sup_{n \geq 1}\mathbb P(|X_n| \geq M/2).\]
		Taking $M \to \infty$ we get the desired equality.
		
		\emph{We may have also done a quicker proof
		by using Prokhorov's theorem (since we already showed tightness)
		and noticing that a random variable is characterized
		by the expected values of smooth compactly supported functions}.
\end{proof}

\begin{lemma}[Point processes] \label{lem:ProcessesSmoothCompactly}
Let $\mathcal{X},\mathcal{X}_1,\mathcal{X}_2,\ldots$ be point processes on $\mathbb C$ such that
\[\int_{\mathbb C} f \mathrm d \mu_{\mathcal X_n} 
\xrightarrow[n \to \infty]{ \dd}
\int_{\mathbb C} f \mathrm d \mu_{\mathcal X}
\]
for every smooth compactly supported function $f: \mathbb C \to \mathbb R$.
Then, the same holds for every continuous compactly supported function $f:\mathbb{C}\to\mathbb{R}$,
in other words the sequence $(\mathcal X_n)_{n \geq 1}$ converges in law towards $\mathcal X$.
\end{lemma}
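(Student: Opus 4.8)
The plan is to separate the statement into a genuinely analytic part and a part that holds by convention. By the convention adopted in this paper (see the formulation of Theorem~\ref{th:spherical:girko:gininf}), convergence in law of point processes on $\mathbb{C}$ is tested against continuous compactly supported functions, i.e.\ it \emph{is} the assertion that $\int_{\mathbb{C}}f\,\mathrm d\mu_{\mathcal{X}_n}\xrightarrow{\mathrm{law}}\int_{\mathbb{C}}f\,\mathrm d\mu_{\mathcal{X}}$ for every $f\in\mathcal{C}_c(\mathbb{C})$ (this coincides with the standard vague-topology notion of convergence in law of random measures). Hence the second assertion of the lemma is a reformulation of the first, and the whole content is to upgrade the class of test functions from $\mathcal{C}^\infty_c(\mathbb{C})$ to $\mathcal{C}_c(\mathbb{C})$. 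I would do this by a uniform approximation argument, the only subtlety being that the mass of $\mu_{\mathcal{X}_n}$ on a fixed compact set is a random quantity, not a bounded one.

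Concretely, fix $f\in\mathcal{C}_c(\mathbb{C})$ and a compact set $K'\subset\mathbb{C}$ containing a neighbourhood of $\mathrm{supp}(f)$. Pick $g\in\mathcal{C}^\infty_c(\mathbb{C})$ with $0\le g\le1$ and $g\equiv1$ on $K'$, so that $\mathbf 1_{K'}\le g$ pointwise, and pick mollifications $f_k\in\mathcal{C}^\infty_c(\mathbb{C})$ with $\mathrm{supp}(f_k)\subset K'$ and $\|f-f_k\|_\infty\to0$. Since $g$ is smooth and compactly supported, the hypothesis applies to it and yields $\int g\,\mathrm d\mu_{\mathcal{X}_n}\xrightarrow{\mathrm{law}}\int g\,\mathrm d\mu_{\mathcal{X}}$; a sequence of real random variables that converges in law is tight, and since the point count $N_n:=\#(\mathcal{X}_n\cap K')=\int\mathbf 1_{K'}\,\mathrm d\mu_{\mathcal{X}_n}$ satisfies $0\le N_n\le\int g\,\mathrm d\mu_{\mathcal{X}_n}$, we obtain that $(N_n)_n$ is tight: for each $\varepsilon>0$ there is $R=R(\varepsilon)$ with $\sup_n\mathbb{P}(N_n>R)\le\varepsilon$, and, enlarging $R$ if necessary and using that $\mathcal{X}$ is locally finite, also $\mathbb{P}(\#(\mathcal{X}\cap K')>R)\le\varepsilon$. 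This tightness is exactly the input that the smooth hypothesis delivers and that a merely continuous test function cannot control on its own.

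With this in hand I would finish by a $3\varepsilon$ argument. On the event $\{N_n\le R\}$, using that $f$ and every $f_k$ are supported in $K'$, one has the deterministic estimate $\big|\int f\,\mathrm d\mu_{\mathcal{X}_n}-\int f_k\,\mathrm d\mu_{\mathcal{X}_n}\big|\le R\,\|f-f_k\|_\infty$; therefore, for any bounded Lipschitz $F:\mathbb{R}\to\mathbb{R}$,
\[
\Big|\mathbb{E}\Big[F\Big(\textstyle\int f\,\mathrm d\mu_{\mathcal{X}_n}\Big)\Big]-\mathbb{E}\Big[F\Big(\textstyle\int f_k\,\mathrm d\mu_{\mathcal{X}_n}\Big)\Big]\Big|\le\mathrm{Lip}(F)\,R\,\|f-f_k\|_\infty+2\|F\|_\infty\,\varepsilon,
\]
and the same with $\mathcal{X}$ in place of $\mathcal{X}_n$. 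Since $f_k$ is smooth compactly supported, the hypothesis gives $\mathbb{E}\big[F\big(\int f_k\,\mathrm d\mu_{\mathcal{X}_n}\big)\big]\to\mathbb{E}\big[F\big(\int f_k\,\mathrm d\mu_{\mathcal{X}}\big)\big]$; letting $n\to\infty$, then $k\to\infty$ (so $\|f-f_k\|_\infty\to0$), then $\varepsilon\to0$, yields $\mathbb{E}\big[F\big(\int f\,\mathrm d\mu_{\mathcal{X}_n}\big)\big]\to\mathbb{E}\big[F\big(\int f\,\mathrm d\mu_{\mathcal{X}}\big)\big]$; since bounded Lipschitz functions determine convergence in law on $\mathbb{R}$, this is precisely $\int f\,\mathrm d\mu_{\mathcal{X}_n}\xrightarrow{\mathrm{law}}\int f\,\mathrm d\mu_{\mathcal{X}}$, as required. (Alternatively, one may quote the classical approximation theorem for weak convergence, with the role of the ``negligibility'' hypothesis played by the tightness of $(N_n)_n$.) The one point that genuinely needs care is thus the passage from the smooth to the merely continuous test functions: since there is no deterministic bound on $\int(f-f_k)\,\mathrm d\mu_{\mathcal{X}_n}$, one must first convert the smooth hypothesis, via the single bump $g\ge\mathbf 1_{K'}$, into tightness of the point counts on $K'$; the remaining steps are routine.
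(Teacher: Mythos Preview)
Your proof is correct and follows essentially the same approach as the paper's: both approximate $f$ uniformly by smooth $f_k$ supported in a fixed compact $K$, introduce a smooth bump $g\ge\mathbf 1_K$ to control the random point count, and test against bounded Lipschitz $F$. The only cosmetic difference is that you first extract tightness of the point counts and then truncate at a level $R$, whereas the paper packages both steps into the single bounded continuous function $H^{(k)}(x)=C\min\{\|f-f_k\|_\infty\,|x|,1\}$ and passes to the limit via dominated convergence.
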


\begin{proof}
Let us show that for all bounded Lipschitz $F: \mathbb R \to \mathbb R$ and continous and compactly supported $f: \mathbb C \to \mathbb R$,
\[\lim_{n \to \infty}\mathbb E\Big[F\Big(\int_{\mathbb C} f \mathrm d \mu_{\mathcal X_n} 
\Big) \Big]=
\mathbb E\Big[F\Big(\int_{\mathbb C} f \mathrm d \mu_{\mathcal X} 
\Big) \Big].
\]
Let $(f_k)_{k \geq 1}$ 
be a sequence of smooth functions whose support 
is included in the common compact set $K$ with
$\|f-f_k\|_{\infty} \to 0$. 
By denoting
$\nu_n = \mu_{\mathcal X_n}$ and $\nu_{\infty} = \mu_{\mathcal X}$,
we can write
\[
\Big|\mathbb{E}\Bigr[F\Bigr(\int f\mathrm d \nu_n\Bigr)\Bigr]
-
\mathbb{E}\Bigr[F\Bigr(\int f \mathrm d \nu_\infty\Bigr)\Bigr] \Big|
\leq T_n^{(k)} + T_\infty^{(k)} + S_n^{(k)},
\]
where
\[T_n^{(k)} = \Big|\mathbb{E}\Bigr[F\Bigr(\int f
\mathrm d \nu_n\Bigr)\Bigr]
-
\mathbb{E}\Bigr[F\Bigr(\int f_k \mathrm d \nu_n\Bigr)\Bigr] \Big|
\quad \mbox{ and }\quad 
S^{(k)}_n = \Big|\mathbb{E}\Bigr[F\Bigr(\int f_k
\mathrm d \nu_n\Bigr)\Bigr]
-
\mathbb{E}\Bigr[F\Bigr(\int f_k \mathrm d \nu_\infty\Bigr)\Bigr] \Big|.
 \] 
Since $F$ is Lipschitz and bounded, there is a constant $C>0$ 
such that
$|F(x) - F(y)| \leq C \min \{|x-y|,1\} $. We have
\[\limsup_{n \to \infty} T_n^{(k)} \leq 
\limsup_{n \to \infty} \mathbb E\big[C\min\big\{\|f - f_k \|_{\infty}
\nu_n(K), 1\big\} \big]
\leq 
\limsup_{n \to \infty} 
\mathbb E \Big[H^{(k)}\Big(\int g \mathrm d \nu_n\Big) \Big],\]
where we have 
taken a compactly supported smooth function
$g: \mathbb C \to \mathbb R$ such that
$1_K \leq g$ and we have
defined $H^{(k)}(x) = C \min\{\|f-f_k\|_{\infty} |x|,1\}$.
Since $H^{(k)}$ is continuous and bounded we get
\[\limsup_{n \to \infty} T_n^{(k)} \leq 
\mathbb E \Big[H^{(k)}\Big(\int g \mathrm d \nu_\infty\Big) \Big]\]
so that we have
\[\limsup_{n \to \infty} \Big|\mathbb{E}\Bigr[F\Bigr(\int f\mathrm d \nu_n\Bigr)\Bigr]
-
\mathbb{E}\Bigr[F\Bigr(\int f \mathrm d \nu_\infty\Bigr)\Bigr] \Big|
\leq 
2\mathbb E \Big[H^{(k)}\Big(\int g \mathrm d \nu_\infty\Big) \Big].\]
Since $H^{(k)}$ converges pointwise to zero as $k$ goes to $\infty$, we may apply the dominated convergence theorem to
$H^{(k)}(\int g \mathrm d \nu_{\infty})$ to get
$\lim_{k \to \infty} 
2\mathbb E [H^{(k)}(\int g \mathrm d \nu_\infty) ] = 0$ so that
\[\limsup_{n \to \infty} \Big|\mathbb{E}\Bigr[F\Bigr(\int f\mathrm d \nu_n\Bigr)\Bigr]
-
\mathbb{E}\Bigr[F\Bigr(\int f \mathrm d \nu_\infty\Bigr)\Bigr] \Big|
= 0,\]
which concludes the proof.

\end{proof}

The following lemma is a consequence of the
classical fact that if $f_n:M \to M$
is a sequence of measurable functions on
a locally compact metric space $M$ converging uniformly on
compact sets to the identity function and
$\mathcal X_n$ is a sequence of point processes
converging to $\mathcal X$
then $f_n(\mathcal X_n)$ also converges to $\mathcal X$. 
For convenience
of the reader, we give
a direct proof in the particular case
that interests us which can be seen
as a ``delta method'' lemma. 

\begin{lemma}[Delta-method for point processes] \label{lem:ChangeOfCoordinates}
Let $(\mathcal X_n)_{n \geq 1}$ be a sequence
of point processes on an open set 
$U \subset \mathbb C$ and fix $x_0 \in U$. Consider two open sets $V_1,V_2 \subset \mathbb C$
containing the origin 
together with diffeomorphisms
$\varphi_i: U \to V_i$.
 Then
$\sqrt n (\varphi_1(\mathcal X_n) - 
\varphi_1(x_0))$
converges in law if and only if $\sqrt n (\varphi_2(\mathcal X_n) - \varphi_2(x_0))$
also does and, in that case,
\[\lim_{n \to \infty}
\sqrt n \big(\varphi_1(\mathcal X_n) - \varphi_1(x_0) \big)
=
(\mathrm d \varphi_1)_{x_0} \circ
(\mathrm d \varphi_2)_{x_0}^{-1}
\Big(\lim_{n \to \infty}
\sqrt n \big(\varphi_2(\mathcal X_n) - \varphi_2(x_0)\big)\Big).\]
\end{lemma}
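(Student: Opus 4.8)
The plan is to reduce the statement to the test-function characterization of Lemma~\ref{lem:ProcessesSmoothCompactly}. By the symmetry of the roles of $\varphi_1$ and $\varphi_2$ — exchanging them replaces the linear map $(\mathrm d\varphi_1)_{x_0}\circ(\mathrm d\varphi_2)_{x_0}^{-1}$ by its inverse — it is enough to prove one implication: assuming that $\mathcal Y_n^{(2)} := \sqrt n\bigl(\varphi_2(\mathcal X_n)-\varphi_2(x_0)\bigr)$ converges in law to some point process $\mathcal Y$ on $\mathbb C$, I would show that $\mathcal Y_n^{(1)} := \sqrt n\bigl(\varphi_1(\mathcal X_n)-\varphi_1(x_0)\bigr)$ converges in law to $L(\mathcal Y)$, where $L := (\mathrm d\varphi_1)_{x_0}\circ(\mathrm d\varphi_2)_{x_0}^{-1}$ is an $\mathbb R$-linear automorphism of $\mathbb C$.

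First I would encode the change of coordinates at scale $\sqrt n$. Set $h := \varphi_1\circ\varphi_2^{-1}\colon V_2\to V_1$, a diffeomorphism with $h(p)=q$ for $p:=\varphi_2(x_0)$, $q:=\varphi_1(x_0)$, and with $(\mathrm d h)_p = L$ by the chain rule. On the domain $D_n := \sqrt n(V_2-p)$, which increases to all of $\mathbb C$, put $g_n(w) := \sqrt n\bigl(h(p+w/\sqrt n)-q\bigr)$; then $\mathcal Y_n^{(1)} = g_n\bigl(\mathcal Y_n^{(2)}\bigr)$ tautologically. A first-order Taylor expansion of $h$ together with the continuity of $\mathrm d h$ gives $\sup_{w\in C}\lvert g_n(w)-L(w)\rvert\to 0$ for every compact $C\subset\mathbb C$; moreover $g_n$ is a diffeomorphism onto $\sqrt n(V_1-q)$ with inverse $g_n^{-1}(w')=\sqrt n\bigl(h^{-1}(q+w'/\sqrt n)-p\bigr)$, so the same argument yields $g_n^{-1}\to L^{-1}$ locally uniformly.

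Next I would run the test-function argument. Fix $f\in\mathcal C_c^\infty(\mathbb C)$. For $n$ large, $f\circ g_n$ (extended by $0$ off $D_n$) is a compactly supported test function for $\mathcal Y_n^{(2)}$ and $\int f\,\mathrm d\mu_{\mathcal Y_n^{(1)}}=\int (f\circ g_n)\,\mathrm d\mu_{\mathcal Y_n^{(2)}}$. From $g_n^{-1}\to L^{-1}$ locally uniformly one checks that, for $n$ large, the supports of $f\circ g_n$ and of $f\circ L$ all lie in a fixed compact set $K$, and on $K$ one has $\lVert f\circ g_n-f\circ L\rVert_\infty\le\omega_f\bigl(\sup_K\lvert g_n-L\rvert\bigr)\to 0$, with $\omega_f$ the modulus of continuity of $f$. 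Writing
\[
\int (f\circ g_n)\,\mathrm d\mu_{\mathcal Y_n^{(2)}}
=\int (f\circ L)\,\mathrm d\mu_{\mathcal Y_n^{(2)}}
+\int (f\circ g_n-f\circ L)\,\mathrm d\mu_{\mathcal Y_n^{(2)}},
\]
the first term converges in law to $\int (f\circ L)\,\mathrm d\mu_{\mathcal Y}$ since $f\circ L\in\mathcal C_c^\infty(\mathbb C)$ and $\mathcal Y_n^{(2)}\to\mathcal Y$, while the second is bounded in absolute value by $\lVert f\circ g_n-f\circ L\rVert_\infty\,\mu_{\mathcal Y_n^{(2)}}(K)$; choosing $g\in\mathcal C_c^\infty(\mathbb C)$ with $g\ge\mathbf 1_K$ one has $\mu_{\mathcal Y_n^{(2)}}(K)\le\int g\,\mathrm d\mu_{\mathcal Y_n^{(2)}}$, which converges in law and is therefore tight, so the second term tends to $0$ in probability. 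A Slutsky-type argument then gives $\int f\,\mathrm d\mu_{\mathcal Y_n^{(1)}}\to\int (f\circ L)\,\mathrm d\mu_{\mathcal Y}=\int f\,\mathrm d\mu_{L(\mathcal Y)}$ in law; since $f$ was arbitrary, Lemma~\ref{lem:ProcessesSmoothCompactly} yields $\mathcal Y_n^{(1)}\to L(\mathcal Y)$ in law, which is the assertion.

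The only genuinely delicate points are bookkeeping ones — that the domains $D_n$ exhaust $\mathbb C$ so that $f\circ g_n$ is eventually a bona fide element of $\mathcal C_c^\infty$, and the combination of the deterministic estimate $\lVert f\circ g_n-f\circ L\rVert_\infty\to 0$ with the tightness of the random masses $\mu_{\mathcal Y_n^{(2)}}(K)$ in the Slutsky step; neither is a real obstacle, which is why recording a direct argument is worthwhile.
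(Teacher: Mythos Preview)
Your proof is correct and follows essentially the same route as the paper's: reduce to a single diffeomorphism $h=\varphi_1\circ\varphi_2^{-1}$, compare the rescaled map $g_n$ with its linearization $L$ uniformly on compacta, and bound the resulting error on test functions by a vanishing deterministic factor times the random mass $\mu_{\mathcal Y_n^{(2)}}(K)$, which is tight by the convergence hypothesis. The paper's version makes the slightly cleaner preliminary reduction $\varphi_1=\mathrm{id}$ and writes the Taylor remainder as $\psi(x)=\mathrm d\psi_0(x)+\|x\|r(x)$ rather than packaging it into $g_n$, but the mechanism and the Slutsky-type conclusion are the same.
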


\begin{proof}
First, by changing $\varphi_i$ by 
$\varphi_i - \varphi_i(x_0)$, we may consider $\varphi_i(x_0)=0$.
Then, under this assumption, by considering the point process 
$\mathcal Y_n = \varphi_1(\mathcal X_n)$ 
and the maps
$\psi_i = \varphi_i \circ \varphi_1^{-1}$
the statement reduces to the following statement.
Let $\mathcal Y_n$ be a sequence of point
processes on $V_1$ that contains the origin 
such that 
$\sqrt n \mathcal Y_n$ converges. If
$\psi: V_1 \to V_2$ is a diffeomorphism
that satisfies $\psi(0) = 0$ then
\[\lim_{n \to \infty} 
\sqrt n\psi(\mathcal Y_n)
=(\mathrm d \psi)_0\big(\lim_{n \to \infty} \sqrt n \mathcal Y_n \big) .\]
To show this, let us consider 
a compactly supported smooth function $f: \mathbb C \to \mathbb R$
and let us study the difference
$\sum_{y \in \mathcal Y_n}
f(\sqrt n \psi(y)) - 
\sum_{y \in \mathcal Y_n}
f(\sqrt n \mathrm d\psi_0(y))$.
Write $\psi(x) = \mathrm d \psi_0 (x) + 
\|x\| r(x)$, where
the limit of $r(x)$ as $x$ goes to zero is zero.
Since $f$ is smooth and compactly supported
it is Lipschitz with 
some Lipschitz constant $C > 0$ and its
support is contained in some disk $\mathcal D$.
Using that $\psi^{-1}$ is Lipschitz near 
the origin we can take a disk 
$\widetilde {\mathcal D}$
such that 
$\psi^{-1}(\mathcal D/\sqrt n) \subset 
\widetilde {\mathcal D}/\sqrt n $
and we may also assume
$\mathrm d \psi_0^{-1}(\mathcal D/\sqrt n) \subset 
\widetilde {\mathcal D}/\sqrt n $.
Consider a compactly supported continuous
function
$g: \mathbb C \to \mathbb R$ satisfying $1_{\widetilde {\mathcal D}} \leq g$. We may write
\begin{align}
\Big|\sum_{y \in \mathcal Y_n}
f(\sqrt n \psi(y)) - 
\sum_{y \in \mathcal Y_n}
f(\sqrt n \mathrm d\psi_0(y))\Big|
&\leq \sum_{y \in \mathcal Y_n}
C \sqrt n|\psi(y) - \mathrm d \psi_0(y)|  
1_{y \in \psi^{-1}(\mathcal D/\sqrt n) 
\cup \mathrm d \psi_0^{-1}(\mathcal D/\sqrt n)}
\nonumber \\
&\leq \sum_{y \in \mathcal Y_n}
C\sqrt n \|y\| |r(y)| 1_{\widetilde 
{\mathcal D}/ \sqrt n}(y)      
\label{eq:SumDelta}    .           
\end{align}
If $R$ is the radius of $\widetilde {\mathcal D}$,
we know that
$\sqrt n\|y\|  1_{\widetilde 
{\mathcal D}/ \sqrt n}(y)   \leq R$.
Since $\lim_{x \to 0} r(x) = 0$,
we know that $r(y) 1_{\widetilde 
{\mathcal D}/ \sqrt n}(y) \leq \varepsilon_n $
for a sequence $\varepsilon_n$ that goes to
zero. Then we can bound
\eqref{eq:SumDelta} by
\[CR\varepsilon_n\sum_{y \in \mathcal Y_n}
g(\sqrt n y) \xrightarrow[n \to \infty]{\dd} 0  .\]
This implies the result.
\end{proof}

\bibliographystyle{abbrvnat}
{\footnotesize\bibliography{radius}}

@article{Davies,
  author =       {Jain, Vishesh and Sah, Ashwin and Sawhney, Mehtaab},
  title =        {On the real {Davies}' conjecture},
  fjournal =     {The Annals of Probability},
  journal =      {Ann. Probab.},
  XISSN =          {0091-1798},
  volume =       49,
  number =       6,
  pages =        {3011--3031},
  year =         2021,
  language =     {English},
  XDOI =           {10.1214/21-AOP1522},
  keywords =     {15A18,15B52,60B20},
  zbMATH =       7467489,
  Zbl =          {1486.15013}
}

@article {MR1148410,
  AUTHOR =       {Kostlan, Eric},
  TITLE =        {On the spectra of {G}aussian matrices},
  NOTE =         {Directions in matrix theory (Auburn, AL, 1990)},
  JOURNAL =      {Linear Algebra Appl.},
  FJOURNAL =     {Linear Algebra and its Applications},
  VOLUME =       {162/164},
  YEAR =         1992,
  PAGES =        {385--388},
  XDOI =           {10.1016/0024-3795(92)90386-O}
}

@article {MR2018415,
  AUTHOR =       {Shirai, Tomoyuki and Takahashi, Yoichiro},
  TITLE =        {Random point fields associated with certain {F}redholm
                  determinants. {I}. {F}ermion, {P}oisson and boson point
                  processes},
  JOURNAL =      {J. Funct. Anal.},
  FJOURNAL =     {Journal of Functional Analysis},
  VOLUME =       205,
  YEAR =         2003,
  NUMBER =       2,
  PAGES =        {414--463},
  XDOI =           {10.1016/S0022-1236(03)00171-X}
}

@article {MR2489167,
  AUTHOR =       {Krishnapur, Manjunath},
  TITLE =        {From random matrices to random analytic functions},
  JOURNAL =      {Ann. Probab.},
  FJOURNAL =     {The Annals of Probability},
  VOLUME =       37,
  YEAR =         2009,
  NUMBER =       1,
  PAGES =        {314--346},
  XDOI =           {10.1214/08-AOP404}
}

@book {MR2552864,
  AUTHOR =       {Hough, J. Ben and Krishnapur, Manjunath and Peres, Yuval and
                  Vir\'ag, B\'alint},
  TITLE =        {Zeros of {G}aussian analytic functions and determinantal
                  point processes},
  SERIES =       {University Lecture Series},
  VOLUME =       51,
  PUBLISHER =    {American Mathematical Society, Providence, RI},
  YEAR =         2009,
  PAGES =        {x+154},
  XISBN =         {978-0-8218-4373-4},
  XDOI =           {10.1090/ulect/051}
}

@book {MR2641363,
  AUTHOR =       {Forrester, P. J.},
  TITLE =        {Log-gases and random matrices},
  SERIES =       {London Mathematical Society Monographs Series},
  VOLUME =       34,
  PUBLISHER =    {Princeton University Press, Princeton, NJ},
  YEAR =         2010,
  PAGES =        {xiv+791},
  XISBN =         {978-0-691-12829-0},
  XDOI =           {10.1515/9781400835416}
}

@article {MR2684367,
  AUTHOR =       {Tao, Terence and Vu, Van},
  TITLE =        {Smooth analysis of the condition number and the least
                  singular value},
  JOURNAL =      {Math. Comp.},
  FJOURNAL =     {Mathematics of Computation},
  VOLUME =       79,
  YEAR =         2010,
  NUMBER =       272,
  PAGES =        {2333--2352},
  XDOI =           {10.1090/S0025-5718-2010-02396-8}
}

@article {MR2772389,
  AUTHOR =       {Bordenave, Charles},
  TITLE =        {On the spectrum of sum and product of non-{H}ermitian random
                  matrices},
  JOURNAL =      {Electron. Commun. Probab.},
  FJOURNAL =     {Electronic Communications in Probability},
  VOLUME =       16,
  YEAR =         2011,
  PAGES =        {104--113},
  XDOI =           {10.1214/ECP.v16-1606}
}

@article {MR2908617,
  AUTHOR =       {Bordenave, Charles and Chafa\"i, Djalil},
  TITLE =        {Around the circular law},
  JOURNAL =      {Probab. Surv.},
  FJOURNAL =     {Probability Surveys},
  VOLUME =       9,
  YEAR =         2012,
  PAGES =        {1--89},
  XDOI =           {10.1214/11-PS183}
}

@article {MR2926763,
  AUTHOR =       {Hardy, Adrien},
  TITLE =        {A note on large deviations for 2{D} {C}oulomb gas with
                  weakly confining potential},
  JOURNAL =      {Electron. Commun. Probab.},
  FJOURNAL =     {Electronic Communications in Probability},
  VOLUME =       17,
  YEAR =         2012,
  PAGES =        {no. 19, 12},
  XDOI =           {10.1214/ECP.v17-1818}
}

@article {MR3215627,
  AUTHOR =       {Chafa\"i, Djalil and P\'ech\'e, Sandrine},
  TITLE =        {A note on the second order universality at the edge of
                  {C}oulomb gases on the plane},
  JOURNAL =      {J. Stat. Phys.},
  FJOURNAL =     {Journal of Statistical Physics},
  VOLUME =       156,
  YEAR =         2014,
  NUMBER =       2,
  PAGES =        {368--383},
  XDOI =           {10.1007/s10955-014-1007-x}
}

@article {MR3306005,
  AUTHOR =       {Tao, Terence and Vu, Van},
  TITLE =        {Random matrices: universality of local spectral statistics
                  of non-{H}ermitian matrices},
  JOURNAL =      {Ann. Probab.},
  FJOURNAL =     {The Annals of Probability},
  VOLUME =       43,
  YEAR =         2015,
  NUMBER =       2,
  PAGES =        {782--874},
  XDOI =           {10.1214/13-AOP876}
}

@article {MR3615091,
  AUTHOR =       {Jiang, Tiefeng and Qi, Yongcheng},
  TITLE =        {Spectral radii of large non-{H}ermitian random matrices},
  JOURNAL =      {J. Theoret. Probab.},
  FJOURNAL =     {Journal of Theoretical Probability},
  VOLUME =       30,
  YEAR =         2017,
  NUMBER =       1,
  PAGES =        {326--364},
  XDOI =           {10.1007/s10959-015-0634-8}
}

@article {MR4408512,
  AUTHOR =       {Bordenave, Charles and Chafa\"i, Djalil and Garc\'ia-Zelada,
                  David},
  TITLE =        {Convergence of the spectral radius of a random matrix
                  through its characteristic polynomial},
  JOURNAL =      {Probab. Theory Related Fields},
  FJOURNAL =     {Probability Theory and Related Fields},
  VOLUME =       182,
  YEAR =         2022,
  NUMBER =       {3-4},
  PAGES =        {1163--1181},
  XDOI =           {10.1007/s00440-021-01079-9}
}

@incollection {MR4680362,
  AUTHOR =       {Tikhomirov, Konstantin},
  TITLE =        {Quantitative invertibility of non-{H}ermitian random
                  matrices},
  BOOKTITLE =    {{I}nternational {C}ongress of {M}athematicians. {V}ol. 4.
                  {S}ections 5--8},
  PAGES =        {3292--3313},
  PUBLISHER =    {EMS Press, Berlin},
  YEAR =         2023,
  XISBN =         {978-3-98547-062-4; 978-3-98547-562-9; 978-3-98547-058-7}
}

@article {MR4761213,
  AUTHOR =       {Cipolloni, Giorgio and Erd{\H{o}}s, L{\'a}szl{\'o} and Xu,
                  Yuanyuan},
  TITLE =        {Precise asymptotics for the spectral radius of a large
                  random matrix},
  JOURNAL =      {J. Math. Phys.},
  FJOURNAL =     {Journal of Mathematical Physics},
  VOLUME =       65,
  YEAR =         2024,
  NUMBER =       6,
  PAGES =        {Paper No. 063302, 53},
  XDOI =           {10.1063/5.0209705}
}

@unpublished{cipolloni2024universalityextremaleigenvalueslarge,
  title =        {Universality of extremal eigenvalues of large random
                  matrices},
  author =       {Giorgio Cipolloni and László Erdős and Yuanyuan Xu},
  year =         2024,
  note =         {preprint
                  \href{https://arxiv.org/abs/2312.08325v3}{arXiv:2312.08325v3}}
}

@unpublished{cutfast,
  title =        {On the cutoff phenomenon for fast diffusion and porous
                  medium equations},
  author =       {Chafaï, Djalil and Fathi, Max and Simonov, Nikita},
  year =         2025,
  note =
                  {\href{https://arxiv.org/abs/2503.11770v1}{arXiv:2503.11770v1}}
}

@book{fubini,
  AUTHOR =       {Huybrechts, Daniel},
  TITLE =        {Complex geometry},
  SERIES =       {Universitext},
  NOTE =         {An introduction},
  PUBLISHER =    {Springer-Verlag, Berlin},
  YEAR =         2005,
  PAGES =        {xii+309},
  XISBN =         {3-540-21290-6},
  MRCLASS =      {32Qxx (14-01 32-01 32G05 32J25 53C55 53C56)},
  MRNUMBER =     2093043,
  MRREVIEWER =   {Richard\ P.\ Thomas},
}

@article{invert,
  author =       {Tikhomirov, Konstantin},
  title =        {Invertibility via distance for noncentered random matrices
                  with continuous distributions},
  fjournal =     {Random Structures \& Algorithms},
  journal =      {Random Struct. Algorithms},
  XISSN =          {1042-9832},
  volume =       57,
  number =       2,
  pages =        {526--562},
  year =         2020,
  language =     {English},
  XDOI =           {10.1002/rsa.20920},
  keywords =     {60B20,15B52},
  zbMATH =       7279076,
  Zbl =          {1456.60028}
}

@article{maltsev-osman,
  title =        {Bulk universality for complex non-Hermitian matrices with
                  independent and identically distributed entries},
  XDOI =           {10.1007/s00440-024-01321-0},
  journal =      {Probability Theory and Related Fields},
  publisher =    {Springer Science and Business Media LLC},
  author =       {Maltsev, Anna and Osman, Mohammed},
  year =         2024,
  month =        sep
}

@article{wegner,
  author =       {Erd{\H{o}}s, L{\'a}szl{\'o} and Ji, Hong Chang},
  title =        {Wegner estimate and upper bound on the eigenvalue condition
                  number of non-{Hermitian} random matrices},
  fjournal =     {Communications on Pure and Applied Mathematics},
  journal =      {Commun. Pure Appl. Math.},
  XISSN =          {0010-3640},
  volume =       77,
  number =       9,
  pages =        {3785--3840},
  year =         2024,
  language =     {English},
  XDOI =           {10.1002/cpa.22201},
  keywords =     {60B20,15B52},
  zbMATH =       7896932,
  Zbl =          {1543.60009}
}

@article{zbMATH02028588,
  author =       {Rider, B.},
  title =        {A limit theorem at the edge of a non-{Hermitian} random
                  matrix ensemble},
  fjournal =     {Journal of Physics A: Mathematical and General},
  journal =      {J. Phys. A, Math. Gen.},
  XISSN =          {0305-4470},
  volume =       36,
  number =       12,
  pages =        {3401--3409},
  year =         2003,
  language =     {English},
  XDOI =           {10.1088/0305-4470/36/12/331},
  keywords =     {60F99,15B52},
  zbMATH =       2028588,
  Zbl =          {1039.60037}
}

@book{zbMATH03176754,
  author =       {Gel'fand, I. M. and Minlos, R. A. and Shapiro, Z. Ya.},
  title =        {Representations of the rotation and Lorentz groups and their
                  applications. Translated from the Russian by G. Cummins and
                  T. Boddington.},
  language =     {English},
  publisher =    {Oxford- University Press},
  pages =        {366 pp xviii},
  year =         1963,
  zbMATH =       3176754,
  Zbl =          {0108.22005}
}

@article{zbMATH03864263,
  author =       {Cohen, Joel E. and Newman, Charles M.},
  title =        {The stability of large random matrices and their products},
  fjournal =     {The Annals of Probability},
  journal =      {Ann. Probab.},
  XISSN =          {0091-1798},
  volume =       12,
  pages =        {283--310},
  year =         1984,
  language =     {English},
  XDOI =           {10.1214/aop/1176993291},
  keywords =     {60K99,60B15,15B52,60F99,92D40,93D05},
  zbMATH =       3864263,
  Zbl =          {0543.60098}
}

@article{zbMATH03901742,
  author =       {Girko, V. L.},
  title =        {Circular law},
  fjournal =     {Teoriya Veroyatnoste{\u{\i}} i e{\"e} Primeneniya},
  journal =      {Teor. Veroyatn. Primen.},
  volume =       29,
  number =       4,
  pages =        {669--679},
  year =         1984
}

@article{zbMATH03940306,
  author =       {Bai, Z. D. and Yin, Y. Q.},
  title =        {Limiting behavior of the norm of products of random matrices
                  and two problems of {Geman}-{Hwang}},
  fjournal =     {Probability Theory and Related Fields},
  journal =      {Probab. Theory Relat. Fields},
  XISSN =          {0178-8051},
  volume =       73,
  pages =        {555--569},
  year =         1986,
  language =     {English},
  XDOI =           {10.1007/BF00324852},
  keywords =     {60F05,60F15,60H10},
  zbMATH =       3940306,
  Zbl =          {0586.60021}
}

@article{zbMATH03978046,
  author =       {Geman, Stuart},
  title =        {The spectral radius of large random matrices},
  fjournal =     {The Annals of Probability},
  journal =      {Ann. Probab.},
  XISSN =          {0091-1798},
  volume =       14,
  pages =        {1318--1328},
  year =         1986,
  language =     {English},
  XDOI =           {10.1214/aop/1176992372},
  keywords =     {60F15,15B52},
  zbMATH =       3978046,
  Zbl =          {0605.60037}
}

@article{zbMATH06125015,
  author =       {Forrester, Peter J. and Mays, Anthony},
  title =        {Pfaffian point process for the {Gaussian} real generalised
                  eigenvalue problem},
  fjournal =     {Probability Theory and Related Fields},
  journal =      {Probab. Theory Relat. Fields},
  volume =       154,
  number =       {1-2},
  pages =        {1--47},
  year =         2012,
  XDOI =           {10.1007/s00440-011-0361-8}
}

@article{zbMATH06221300,
  author =       {Knowles, Antti and Yin, Jun},
  title =        {The isotropic semicircle law and deformation of {Wigner}
                  matrices},
  fjournal =     {Communications on Pure and Applied Mathematics},
  journal =      {Commun. Pure Appl. Math.},
  XISSN =          {0010-3640},
  volume =       66,
  number =       11,
  pages =        {1663--1749},
  year =         2013,
  language =     {English},
  XDOI =           {10.1002/cpa.21450},
  keywords =     {60B20,15B52},
  zbMATH =       6221300,
  Zbl =          {1290.60004}
}

@article{zbMATH06261246,
  author =       {Cacciapuoti, Claudio and Maltsev, Anna and Schlein,
                  Benjamin},
  title =        {Local {Marchenko}-{Pastur} law at the hard edge of sample
                  covariance matrices},
  fjournal =     {Journal of Mathematical Physics},
  journal =      {J. Math. Phys.},
  XISSN =          {0022-2488},
  volume =       54,
  number =       4,
  pages =        {043302, 13},
  year =         2013,
  language =     {English},
  XDOI =           {10.1063/1.4801856},
  keywords =     {15B52},
  zbMATH =       6261246,
  Zbl =          {1282.15031}
}

@article{zbMATH06330939,
  author =       {Bourgade, Paul and Yau, Horng-Tzer and Yin, Jun},
  title =        {Local circular law for random matrices},
  fjournal =     {Probability Theory and Related Fields},
  journal =      {Probab. Theory Relat. Fields},
  XISSN =          {0178-8051},
  volume =       159,
  number =       {3-4},
  pages =        {545--595},
  year =         2014,
  language =     {English},
  XDOI =           {10.1007/s00440-013-0514-z},
  keywords =     {15B52,82B44,60B20,15A18},
  zbMATH =       6330939,
  Zbl =          {1301.15021}
}

@article{zbMATH06347297,
  author =       {Ajanki, Oskari Heikki and Erd{\H{o}}s, L{\'a}szlo and
                  Kr{\"u}ger, Torben},
  title =        {Local semicircle law with imprimitive variance matrix},
  fjournal =     {Electronic Communications in Probability},
  journal =      {Electron. Commun. Probab.},
  volume =       19,
  pages =        9,
  note =         {Id/No 33},
  year =         2014,
  language =     {English},
  XDOI =           {10.1214/ECP.v19-3121}
}

@article{zbMATH06425285,
  author =       {Rogers, Tim},
  title =        {Universal sum and product rules for random matrices},
  fjournal =     {Journal of Mathematical Physics},
  journal =      {J. Math. Phys.},
  volume =       51,
  number =       9,
  pages =        {093304, 15},
  year =         2010,
  XDOI =           {10.1063/1.3481569}
}

@article{zbMATH06775355,
  author =       {He, Yukun and Knowles, Antti},
  title =        {Mesoscopic eigenvalue statistics of {Wigner} matrices},
  fjournal =     {The Annals of Applied Probability},
  journal =      {Ann. Appl. Probab.},
  volume =       27,
  number =       3,
  pages =        {1510--1550},
  year =         2017,
  XDOI =           {10.1214/16-AAP1237}
}

@book{zbMATH06780221,
  author =       {Erd{\H{o}}s, L{\'a}szl{\'o} and Yau, Horng-Tzer},
  title =        {A dynamical approach to random matrix theory},
  fseries =      {Courant Lecture Notes in Mathematics},
  series =       {Courant Lect. Notes Math.},
  XISSN =          {1529-9031},
  volume =       28,
  year =         2017,
  publisher =    {Providence, RI: American Mathematical Society (AMS); New
                  York, NY: Courant Institute of Mathematical Sciences},
  XDOI =           {10.1090/cln/028}
}

@article{zbMATH07329432,
  author =       {Cipolloni, Giorgio and Erd{\H{o}}s, L{\'a}szl{\'o} and
                  Schr{\"o}der, Dominik},
  title =        {Edge universality for non-{Hermitian} random matrices},
  fjournal =     {Probability Theory and Related Fields},
  journal =      {Probab. Theory Relat. Fields},
  XISSN =          {0178-8051},
  volume =       179,
  number =       {1-2},
  pages =        {1--28},
  year =         2021,
  language =     {English},
  XDOI =           {10.1007/s00440-020-01003-7},
  keywords =     {60B20,15B52},
  zbMATH =       7329432,
  Zbl =          {1461.60008}
}

@article{zbMATH07493826,
  author =       {Butez, Raphael and Garc{\'{\i}}a-Zelada, David},
  title =        {Extremal particles of two-dimensional {Coulomb} gases and
                  random polynomials on a positive background},
  fjournal =     {The Annals of Applied Probability},
  journal =      {Ann. Appl. Probab.},
  volume =       32,
  number =       1,
  pages =        {392--425},
  year =         2022,
  XDOI =           {10.1214/21-AAP1682}
}

@article{zbMATH07549410,
  author =       {Schnelli, Kevin and Xu, Yuanyuan},
  title =        {Convergence rate to the {Tracy}-{Widom} laws for the largest
                  eigenvalue of {Wigner} matrices},
  fjournal =     {Communications in Mathematical Physics},
  journal =      {Commun. Math. Phys.},
  volume =       393,
  number =       2,
  pages =        {839--907},
  year =         2022,
  XDOI =           {10.1007/s00220-022-04377-y}
}

@book{zbMATH07645442,
  author =       {Needham, Tristan},
  title =        {Visual complex analysis. 25th anniversary edition. {With} a
                  new foreword by {Roger} {Penrose}},
  XISBN =         {978-0-19-286891-6; 978-0-19-286892-3; 978-0-19-196494-7},
  year =         2023,
  publisher =    {Oxford: Oxford University Press},
  language =     {English},
  XDOI =           {10.1093/oso/9780192868916.001.0001},
  keywords =     {30-01,31-01,00A05},
  zbMATH =       7645442,
  Zbl =          {1508.30001}
}

@book{zbMATH07925435,
  author =       {Byun, Sung-Soo and Forrester, Peter J.},
  title =        {Progress on the study of the {Ginibre} ensembles},
  fseries =      {KIAS Springer Series in Mathematics},
  series =       {KIAS Springer Ser. Math.},
  XISSN =          {2731-5142},
  volume =       3,
  XISBN =         {978-981-975172-3; 978-981-975175-4; 978-981-975173-0},
  year =         2025,
  publisher =    {Singapore: Springer},
  language =     {English},
  XDOI =           {10.1007/978-981-97-5173-0},
  keywords =
                  {15-02,60-02,15B52,60B20,60G55,82B31,82B10,82B40,82B30,81V70,81Q50},
  zbMATH =       7925435,
  Zbl =          {1561.15001}
}

\end{document}